\documentclass[12pt,reqno,english]{amsart}

\usepackage{amsmath, amssymb, amscd, amsthm, amsfonts}
\usepackage{graphicx,xcolor,color}
\usepackage[linktocpage,colorlinks,linkcolor=blue,anchorcolor=blue,citecolor=blue,urlcolor=blue,pagebackref]{hyperref}
\usepackage{mathtools,enumitem}
\usepackage[T1]{fontenc}
\usepackage[utf8]{inputenc}
\usepackage{float}
\usepackage{mfirstuc}
\usepackage{amsmath, amssymb, amscd, amsthm, amsfonts, bm}
\usepackage{amssymb,amsthm,amsfonts,amsbsy,latexsym,amsxtra}
\usepackage{mathtools,enumitem}

\usepackage{float}
\usepackage{soul}
\usepackage{mathrsfs} 
\usepackage[edges]{forest}

\newcommand{\adapt}{\mathsf{adapt}}
\numberwithin{equation}{section}
\theoremstyle{plain}
\setlength{\oddsidemargin}{6mm}
\setlength{\evensidemargin}{6mm} 
\setlength{\textwidth}{145mm}

\usepackage[letterpaper]{geometry}
\geometry{verbose,tmargin=1in,bmargin=1in,lmargin=1in,rmargin=1in}

\renewcommand*{\backref}[1]{\ifx#1\relax \else Page #1 \fi}
\renewcommand*{\backrefalt}[4]{%
  \ifcase #1 \footnotesize{(Not cited.)}%
  \or        \footnotesize{(Cited on page~#2.)}%
  \else      \footnotesize{(Cited on pages~#2.)}%
  \fi
}

\def\namedlabel#1#2{\begingroup
    #2%
    \def\@currentlabel{#2}%
    \phantomsection\label{#1}\endgroup
}

\title[Weighted Laplacian-Eigenmap based Nonparametric Regression]{Adaptive and non-adaptive minimax rates for weighted Laplacian-Eigenmap based nonparametric regression}

\author{Zhaoyang Shi}
\author{Krishnakumar Balasubramanian}
\author{Wolfgang Polonik}

\address{\hspace{-0.16in}Department of Statistics, University of California, Davis.\newline Email: \texttt{zysshi@ucdavis.edu}, \texttt{ kbala@ucdavis.edu}, \texttt{wpolonik@ucdavis.edu}}
\usepackage[foot]{amsaddr}

\newtheorem{Theorem}{Theorem}[section]

\newtheorem{Proposition}{Proposition}[section]
\newtheorem{Remark}{Remark}[section]

\newtheorem{Lemma}{Lemma}[section]

\newcommand{\mbb}{\mathbb}
\newcommand{\mbf}{\mathbf}
\newcommand{\mcal}{\mathcal}

\DeclareMathOperator{\Var}{Var}

\allowdisplaybreaks

\begin{document}

\maketitle

\begin{abstract}
We show both adaptive and non-adaptive minimax rates of convergence for a family of weighted Laplacian-Eigenmap based nonparametric regression methods, when the true regression function belongs to a Sobolev space and the sampling density is bounded from above and below. The adaptation methodology is based on extensions of Lepski's method and is over both the smoothness parameter ($s\in\mbb{N}_{+}$) and the norm parameter ($M>0$) determining the constraints on the Sobolev space. Our results extend the non-adaptive result in \cite{green2021minimax}, established for a specific normalized graph Laplacian, to a wide class of weighted Laplacian matrices used in practice, including the unnormalized Laplacian and random walk Laplacian.
\end{abstract}

\section{Introduction}
Consider the following regression model,
\begin{align}\label{regressmodel}
    Y_{i}=f(X_{i})+\varepsilon_{i},\quad i=1,\ldots,n,
\end{align}
where $f:\mathcal{X}\to \mathbb{R}$ is the true regression function, $X_{i} \overset{\text{i.i.d.}}{\sim} g$, where $g$ is a density on $\mcal{X}\subset \mathbb{R}^d$, and $\varepsilon_{i}\overset{\text{i.i.d.}}{\sim} N(0,1)$ is the noise (independent of the $X_{i}$'s). The goal is to estimate the regression function $f$ given pairs of observations $(X_{1}, Y_{1}),\ldots, (X_{n}, Y_{n})$. Our main contribution in this work is to develop non-adaptive and adaptive estimators that achieve minimax optimal estimation rates, when $f$ lies in Sobolev spaces. 

The estimators we study are based on performing principal components regression using the estimated eigenfunctions of a family of weighted Graph Laplacian operators. To this end, various versions of Graph Laplacian matrices have been considered in the literature. Recently,~\cite{hoffmann2022spectral} proposed a unifying framework by describing a family of Graph Laplacian matrices, parametrized by $w\in\mathbb{R}^3$; see~\eqref{eq:weights} and~\eqref{weightedgraph} for details. This captures Laplacian matrices used widely in practice, including the normalized, unnormalized and the random walk Laplacian.

\cite{green2021minimax} analyzed principal components regression specifically using unnormalized graph Laplacian matrices constructed over $\epsilon$-graphs, and established non-adaptive minimax rates when $f$ lies in Sobolev spaces. In this paper, we first extend this result to the entire family of weighted Laplacian matrices from~\eqref{eq:weights} and~\eqref{weightedgraph}; Theorem \ref{firstorder}. These results are established by assuming a sampling density bounded from above and below and a true regression function belonging to a Sobolev space.

Note that technically, the weighted Laplacian matrices correspond to a family of weighted Sobolev spaces which all become equivalent under the above-mentioned boundedness assumption on the sampling density. However, the parameters of the corresponding Sobolev spaces, in particular the smoothness parameter ($s\in\mbb{N}_{+}$) and the norm parameter ($M>0$) 
determining the constraints on the Sobolev space, both change on $w$. 

While the minimax rate optimal non-adaptive estimator depends on the knowledge of the smoothness and norm parameters of the true regression function, these parameters are unknown in practice. Tuning parameters, such as $\epsilon$, the graph radius (or the bandwidth for the kernel) and $K$, the number of eigenvectors considered, require knowledge of the smoothness and the norm parameters. Hence, in order to apply the Laplacian-based regression methodology in practice, we develop an adaptive estimator, based on Lepski's method, and show in Theorem \ref{Lepski} that the developed estimator achieves minimax rates (up to $\log$ factors) without requiring the knowledge of either the smoothness or the norm parameters. 

The main technical contributions we make in this work towards establishing the aforementioned both adaptive and non-adaptive results include the following:
\begin{itemize}
    \item As a part of the proofs of our main results in Theorem~\ref{firstorder}, we rigorously prove the idea roughly outlined by \cite{hoffmann2022spectral} on showing the convergence of the discrete weighted graph Laplacian matrices to their continuum counterparts (in appropriately well-defined sense) by leveraging the concentration result established by \cite{gine2002rates} for kernel density estimators.
    \item We generalize the convergence property of the eigenvalues of the Laplacian matrices in \cite{calder2022improved} to the weighted Laplacian matrices by providing an analogous bound for the eigenvalues combined with Weyl's law.
    \item We formulate a simultaneous two-parameter Lepski's procedure and obtain the adaptive minimax rate (see Theorem~\ref{Lepski}) through deriving a high-order-moment-based concentration inequality of the weighted Sobolev semi-norm.
\end{itemize}

Our contributions not only highlight the significance of utilizing the weighted graph Laplacians for nonparametric regression but also establish a solid statistical foundation for this method, offering a robust framework that underpins the reliability and effectiveness of this approach.

\subsection{Related works}\label{sec:relatedworks}

Graph Laplacians are widely used in many data science problems for feature learning and spectral clustering~\cite{weiss1999segmentation,shiNormalizedCutsImage2000a,ngSpectralClusteringAnalysis2001,von2007tutorial}, extracting heat kernel signatures for shape analysis~\cite{sun2009concise, andreuxAnisotropicLaplaceBeltramiOperators2015, dunson2021spectral}, reinforcement learning~\cite{mahadevan2007proto,wu2018the} and dimensionality reduction \cite{belkin2003laplacian,coifman2006diffusion}, among other applications. There is an ever-growing literature on further applications of graph Laplacian in data science topic, and we also refer to \cite{belkin2006manifold, wang2015trend, chun2016eigenvector} for more discussions. 

As mentioned above, we consider the application of the weighted graph Laplacian for achieving minimax optimal rates in nonparametric regression. Other works focusing on this problem (including the semi-supervised setting) include  \cite{green2021minimaxsmoothing} and ~\cite{green2021minimax} using unnormalized Laplacian based on the Laplacian eigenmaps \cite{belkin2003laplacian}, \cite{bousquet2003measure} with Laplacian smoothing, \cite{rice1984bandwidth} adopting spectral series regression on the Sobolev spaces, \cite{trillos2022rates} applying the graph Poly-Laplacians (see Remark~\ref{rempoly} for specific comparison to this method) and \cite{hacquard2022topologically} using topological data analysis. We also refer to \cite{zhu2003semi}, \cite{zhou2011error}, \cite{lee2016spectral}, \cite{dicker2017kernel} and \cite{garcia2020maximum} for related analysis in the context of regression problems. 

In recent years, there has been a great deal of progress on obtaining theoretical rates of convergence in the context of Laplacian operator estimation and related eigenvalue and/or eigenfunction estimation. Early work on consistency of graph Laplacians focused on pointwise consistency results for $\epsilon$-graphs, see \cite{belkin2005towards,hein2005graphs, gine2006empirical,hein2007graph} and references therein for more details. For fixed neighborhood size $\epsilon$, \cite{von2008consistency} and \cite{rosasco2010learning} considered spectral convergence of graph Laplacians. Furthermore, \cite{trillos2018variational} established conditions on connectivity for the above spectral convergence with no specific error estimates. Later on, the convergence of Laplacian matrices to Laplacian operators has been considered where, for instance, unnormalized, random walk Laplacians and $k$-NN graph based Laplacians are considered; e.g. see ~\cite{shi2015convergence,garcia2020error,calder2022improved}. There, rates of convergences of Laplacian eigenvalues and eigenvectors to population counterparts with explicit error estimates are derived. Following the above literature, \cite{hoffmann2022spectral} developed a framework for extending the above convergence results to a general Laplacian family, the weighted Laplacians (see below), and presented some heuristic asymptotic analysis.

To the best of our knowledge, the only work that considers adaptivity in the context of Laplacian estimation is \cite{chazal2016data}. They use Lepski’s method for adaptive estimation of the unnormalized Laplace-Beltrami operators, focusing on bandwidth parameters. Also, they adopt a more flexible version of Lepski's method introduced in \cite{lacour2016minimal} that involves certain multiplicative coefficients introduced in the variance and
bias terms to develop the method. Therefore, their proof technique is to consider the trade-off between the bounds on the approximation error and the variance of Laplacian estimators. However, in this paper, we apply Lepski's method in the context of regression problem by using weighted Laplacians instead of just the unnormalized Laplacians (as in~\cite{chazal2016data}). Additionally, besides the bandwidth parameter, our method is also adaptive to the smoothness parameter and the norm parameter of the Sobolev space under consideration, i.e., in our work, we use Lepski's method for \emph{simultaneous} adaptation to the unknown parameters of the function class under consideration.

\section{Preliminaries}
In this section, we first describe the data-based weighted graph Laplacian matrices, and the corresponding nonparametric regression estimator. We then introduce the associated limiting operators and the weighted Sobolev spaces.

\subsection{Weighted graph Laplacian matrices}\label{egraph}
Given i.i.d data $X_{1},\ldots,X_{n}$ from a distribution $G$ on $\mcal{X}\subseteq \mbb{R}^{d}$ with the density $g$, consider a graph $G$ with vertex set $\{X_1,\ldots,X_n\}$ and adjacency matrix $\tilde W$ given by
\begin{align}\label{eq:weights}
    \tilde{w}_{i,j}^{\epsilon}:=\frac{1}{n\epsilon^{d}}\eta\left(\frac{\|X_i-X_j\|}{\epsilon}\right),\quad i,j=1,\ldots,n,
\end{align}
where $\| \cdot\|$ denotes the standard Euclidean norm. Here $\eta \ge 0$ is a kernel function with support $[0,1]$, and $\varepsilon$ is the bandwidth parameter.
In other words, $G$ is constructed by placing an edge $X_i\sim X_j$, when $\|X_i-X_{j}\|\le \epsilon,$ and this edge is given the weight $\tilde{w}_{i,j}^{\epsilon}.$ The term $(n\epsilon^{d})^{-1}$ is a convenient normalization factor. The degree matrix is then given by a diagonal matrix $\tilde{D}$ with the $i$-th diagonal element as 
\begin{align*}
    \tilde{d}_{i}:=\sum_{j=1}^{n}\tilde{w}_{i,j}^{\epsilon},\quad i=1,\ldots,n,
\end{align*}
which can also be thought of as the kernel density estimator (KDE) of the density $g$ at $X_i$. 

The weighted graph Laplacian matrices are a family of graph Laplacians consisting of various types of normalizations characterized by a parameter $w=(p,q,r)\in \mbb{R}^{3}$ constructed as follows. First define a re-weighted adjacency matrix $W$ with $(i,j)$-th element as 
\begin{align*}
    w_{i,j}^{\epsilon}:=\frac{\tilde{w}_{i,j}^{\epsilon}}{\tilde{d}_{i}^{1-\frac{q}{2}}\tilde{d}_{j}^{1-\frac{q}{2}}},\quad i,j=1,\ldots,n,
\end{align*}
so that the corresponding diagonal degree matrix $D$ as entries
\begin{align*}
    d_{i}:=\sum_{j=1}^{n}w_{i,j}^{\epsilon},\quad i=1,\ldots,n.
\end{align*}
Then, the weighted graph Laplacian after re-weighting is defined in \cite{hoffmann2022spectral} as follows: for a tuple $w=(p,q,r)\in\mbb{R}^{3}$,
\begin{equation}\label{weightedgraph}
        L_{w,n,\epsilon}:=
        \left\{
        \begin{aligned}
        &\frac{1}{\epsilon^{2}}D^{\frac{1-p}{q-1}}(D-W)D^{-\frac{r}{q-1}},\qquad\text{if}\ q\neq 1,\\
        &\frac{1}{\epsilon^{2}}(D-W),\qquad\qquad\qquad\quad\text{if}\ q=1,
        \end{aligned}
        \right.
\end{equation}
where $1/\epsilon^{2}$ is also a normalization factor. For $u\in \mbb{R}^{n}$, the $i$-th coordinate of the vector $L_{w,n,\epsilon}u$ is given by
\begin{align}\label{def:weightedL}
    (L_{w,n,\epsilon}u)_{i}=\frac{1}{\epsilon^{2}}\sum_{j=1}^{n}d_{i}^{\frac{1-p}{q-1}}w_{i,j}^{\epsilon}\left(d_{i}^{-\frac{r}{q-1}}u_i-d_{j}^{-\frac{r}{q-1}}u_j\right).
\end{align}
The above weighted graph Laplacian \eqref{weightedgraph} generalizes many commonly used graph Laplacian. For $(p,q,r)=(1,2,0)$, it recovers the unnormalized graph Laplacian $L_{u}$; if $(p,q,r)=(3/2,2,1/2)$, it gives the normalized graph Laplacian $L_{n}$; if $(p,q,r)=(2,2,0)$, it corresponds to a non-symmetric matrix but can be interpreted as a transition probability of a random walk on a graph denoted by $L_{r}$:
\begin{align*}
    L_{u}&:=D-W,\\
    L_{n}&:=D^{-1/2}(D-W)D^{1/2},\\
    L_{r}&:=D^{-1}(D-W),
\end{align*}

While the main focus on $\epsilon$-graphs, we highlight that the above formulation also caputures the limits of graph constructed based on the $k$-nearest neighbor graphs. In particular, when $(p,q,r)=(1,1-2/d,0)$, one can call the related normalization as the \textit{near $k$-NN} normalization; see~\cite{calder2022improved} and \cite{hoffmann2022spectral} for details.

Note that the weighted Laplacian matrix $L_{w,n,\epsilon}$ is actually not self-adjoint with respect to the Euclidean inner product $\langle\cdot,\cdot\rangle$ since it is in general not symmetric. However, it is self-adjoint with respect to the following weighted inner product $\langle \cdot,\cdot\rangle_{g^{p-r}}$:
\begin{equation*}
    \langle\cdot,\cdot\rangle_{g^{p-r}}:=
    \left\{
    \begin{aligned}
        &\langle\cdot,\cdot\rangle_{D^{\frac{p-1-r}{q-1}}}\qquad \text{if}\ q\neq 1,\\
        &\langle\cdot,\cdot\rangle\quad\quad\quad\quad\quad \text{if}\ q=1,
    \end{aligned}
    \right.
\end{equation*}
where for a given a symmetric matrix $A\in\mbb{R}^{n\times n}$ and vectors $u,v\in\mbb{R}^{n}$, define 
\begin{align*}
    \langle u,v \rangle_{A}:=u^{T}Av.
\end{align*}
We also define the normalized weighted inner product: $\langle\cdot,\cdot\rangle_{w,n}:=n^{-1}\langle\cdot,\cdot\rangle_{g^{p-r}}$ and the normalized Euclidean inner product: $\langle\cdot,\cdot\rangle_{n}:=n^{-1}\langle\cdot,\cdot\rangle$ and denote by $\|\cdot\|_{w,n}$ and $\|\cdot\|_{n}$ their respective corresponding norms. Here, our estimation results are measure in $\|\cdot\|_{w,n}$ and under our assumptions in Section \ref{sec:assump}, it can be shown to be equivalent to the classic norm $\|\cdot\|_{n}$.

\subsection{Weighted
Laplacian-Eigenmap based nonparametric regression}\label{modelandalg}

Following the ideas in \cite{belkin2003laplacian} and~\cite{green2021minimax}, we propose the following principal components regression with the weighted Laplacian eigenmaps (PCR-WLE) algorithm:
\begin{itemize}
    \item[(1)] For a given parameter $\epsilon>0$ and a kernel function $\eta$, construct the $\epsilon$-graph according to Section \ref{egraph}.
    \item[(2)] Construct the weighted Laplacian matrix given by \eqref{weightedgraph} and take its eigendecomposition $L_{w,n,\epsilon}=\sum_{i=1}^{n}\lambda_{i}v_{i}v_{i}^{T}$ with respect to $\langle\cdot,\cdot\rangle_{w,n}$, where $(\lambda_i,v_i)$ are the eigenpairs with eigenvalues $0=\lambda_1 \le \ldots\le \lambda_{n}$ in an ascending order and eigenvectors normalized  to satisfy $\|v_{i}\|_{w,n}=1$.
    \item[(3)] Project the response vector $Y=(Y_1,\ldots,Y_{n})^{T}$ onto the space spanned by the first $K$ eigenvectors, i.e., denote by $V_{K}\in \mbb{R}^{n\times K}$ the matrix with $j$-th column as $V_{K,j}=v_{j}$ for $j=1,\ldots,K$ and define
    \begin{align*}
        \hat{f}:=V_{K}V_{K}^{T}Y,
    \end{align*}
    as the estimator.
\end{itemize}

The entries of the vector $\hat f$ are the in-sample values of the estimator of the regression function $f$. \cite{green2021minimax} considered the special case of the above approach for the case when $(p, q, r) =
(1, 2, 0)$ corresponding to the unnormalized graph Laplacian. Here, we consider the entire family of graph Laplaicans for various choices of the parameters $(p, q, r)$, the generalization from~\cite{hoffmann2022spectral}.

\subsection{Weighted Laplacians and weighted Sobolev spaces}

\cite{hoffmann2022spectral} presented a heuristic framework for the convergence of the weighted graph Laplacian $L_{w,n,\epsilon}$ defined in~\eqref{weightedgraph} to the following weighted Laplace-Beltrami operators, in the large sample limit, in terms of the eigenvalues and eigenvectors/eigenfunctions:
\begin{equation}\label{weightedop}
        \left\{
        \begin{aligned}
        &\mcal{L}_{w}u:=-\frac{1}{2g^{p}}\text{div}\left(g^{q}\nabla\left(\frac{u}{g^{r}}\right) \right),\qquad\text{in}\ \mcal{X},\\
        &g^{q}\frac{\partial}{\partial n}\left(\frac{u}{g^{r}}\right)=0,\qquad\qquad\qquad\qquad\text{on}\ \partial \mcal{X}.
        \end{aligned}
        \right.
\end{equation}
Special cases of this convergence, including convergences of $L_u,L_{n},$ have been studied in \cite{shi2015convergence,garcia2020error,calder2022improved} as mentioned before in Section~\ref{sec:relatedworks}. 
Although our focus is not directly on the convergence of the weighted Laplacians but on regression problems, we digress slightly to make the following remark. The proof arguments developed in our paper, in the context of regression rates, can be applied to show the convergence of the weighted Laplacians, thereby rigorously proving the heuristic idea in \cite{hoffmann2022spectral}. This could be accomplished by using the concentration properties of kernel density estimators from \cite{gine2002rates} when the domain is has no  boundary, like we do in the context of regression rates. For domains with boundary is well-known that the convergence of the Laplacian matrices to the Laplacian operators is problematic at the boundary~ \cite{belkin2012toward}.

The weighted Laplacian operators are a generalization of the classical Laplacian operator with different values of $w=(p,q,r)$. Similar to the fact that the Laplacian operator is linked with the Sobolev space, the weighted Laplacian operators in \eqref{weightedop} share a close connection with the following so-called weighted Sobolev spaces; see~\cite{Triebel1983TheoryOF} for a general introduction. Define the weighted $L^{2}$ space for $\ell>0$ on $\mcal{X}$ with a density $g$ as
\begin{align*}
    L^{2}(\mcal{X},g^{\ell}):=\left\{u:\int_{\mcal{X}}|u(x)|^{2}g(x)^{\ell}dx<\infty\right\},
\end{align*}
with inner product
\begin{align*}
    \langle u,v \rangle_{g^{\ell}}:=\int_{\mcal{X}}u(x)v(x)g(x)^{\ell}dx.
\end{align*}
Then, for $w:=(p,q,r)\in\mbb{R}^{3}$ and $s\in \mbb{N}_{+}$, we define the weighted Sobolev space as:
\begin{align*}
H^{s}(\mcal{X},g):=\left\{\frac{u}{g^{r}}\in L^{2}(\mcal{X},g^{p+r}):\|u\|_{H^{s}(\mcal{X},g)}<\infty\right\},
\end{align*}
where the weighted Sobolev norm $\|u\|_{H^{s}(\mcal{X},g)}$ is 
\begin{align*}
    \|u\|_{H^{s}(\mcal{X},g)}^{2}:=\sum_{j=1}^{s}|u|_{H^{j}(\mcal{X},g)}^2+\left\|\frac{u}{g^{r}}\right\|_{L^{2}(\mcal{X},g^{p+r})}^{2},
\end{align*}
with the $j$-th order semi-norm $|\cdot|_{H^{j}(\mcal{X},g)}$ defined as $$|u|_{H^{j}(\mcal{X},g)}:=\sum_{|\alpha|=j}\left\|D^{\alpha}\left(ug^{-r}\right)\right\|_{L^{2}(\mcal{X},g^{p})}$$ and using multi-index notation with $x=(x^{(1)},\ldots,x^{(d)})\in \mbb{R}^{d},$ we have that $D^{\alpha}f(x):=\partial^{|\alpha|}f/\partial (x^{(1)})^{\alpha_1}\ldots \partial (x^{(d)})^{\alpha_d}$ and $|\alpha|=\alpha_1+\ldots+\alpha_{d}$. When $g$ is uniform or $r=0$ and $g$ is bounded from above and below, the weighted Sobolev space $H^{s}(\mcal{X},g)$ becomes (or is equivalent to) the classic Sobolev space $H^{s}(\mcal{X})$. However, when $f/g^{r}$ is  $s$-times differentiable but $f$ is not, the weighted Sobolev space differs from the classic Sobolev space. See \cite{evans2022partial} for more details regarding Sobolev spaces. For $M>0$, the class of all functions $u$ such that $\|u\|_{H^{s}(\mcal{X},g)}\le M$ is a weighted Sobolev ball $H^{s}(\mcal{X},g;M)$ of radius $M.$ 

Furthermore, we say a function $u\in H^{s}(\mcal{X},g)$ belongs to the zero-trace weighted Sobolev space $H_{0}^{s}(\mcal{X},g)$ if there exists a sequence $u_1g^{-r},\ldots,u_{m}g^{-r}$ of $C_{c}^{\infty}(\mcal{X})$ functions such that 
\begin{align*}
    \underset{m\rightarrow\infty}{\lim}\|u_{m}-u\|_{H^{s}(\mcal{X},g)}=0,
\end{align*}
where $C_{c}^{\infty}(\mcal{X})$ stands for the $C^{\infty}$ functions with compact support contained in $\mcal{X}$. 

Similar to the weighted Laplacian matrix $L_{w,n,\epsilon}$, the weighted Laplacian operators \eqref{weightedop} are self-adjoint with respect to the following weighted inner product (\cite{hoffmann2022spectral}):
\begin{align*}
    \langle u, v\rangle_{g^{p-r}}:=\int_{\mcal{X}}u(x)v(x)g^{p-r}(x)dx.
\end{align*}
Note the following connection between the weighted norms and inner products:
\begin{align*}
    \left\|\frac{u}{g^{r}}\right\|_{L^{2}(\mcal{X},g^{p+r})}^{2}=\|u\|_{L^{2}(\mcal{X},g^{p-r})}^{2}=\langle u,u\rangle_{g^{p-r}}.
\end{align*}

A simple example showing the dependency of the choice $M$ on $p,q,r$ is as follows. Suppose that $u/g^{r}$ is the constant function equal to $1$, say, and take $s=1$. Then, we have
\begin{align*}
    \|u\|_{H^{1}(\mcal{X},g)}^{2}=\int_{\mcal{X}}g(x)^{p+r}dx.
\end{align*}
Clearly, the power $p+r$ of the density function $g$ determines the size of the weighted Sobolev ball, and thus $M$. In other words, say for example, assuming $g\ge 1$ for simplicity, larger configurations of $p+r$ will result in large weighted Sobolev norm, thus requiring a large norm parameter $M$. For generic $u/g^{r}$, the situation is more intricate and depends on the geomtry of $u$ and $g$ and choices of $p+r$.

\section{Main results}

We now present our main results on adaptive and non-adaptive rates for estimating the regression function $f$ as in \eqref{regressmodel} under some smoothness assumptions. Before that, we recall that the minimax estimation error over $H^{s}(\mcal{X};M)$, a standard Sobolev ball of radius $M$, is given by  
\begin{align*}
    \underset{\hat{f}}{\inf}\underset{f\in H^{s}(\mcal{X};M)}{\sup}\|\hat{f}-f\|_{n}^{2}\asymp M^{2}(M^{2}n)^{-\frac{2s}{2s+d}},
\end{align*}
with high probability~\cite{gyorfi2002distribution,wasserman2006all,tsybakov2008ntroduction}. Moreover, there are other methods that can achieve the above minimax rate such as kernel smoothing, local polynomial regression, thin-plate splines, etc. In this context,~\cite{green2021minimax} showed that PCR-WLE  method with the unnormalized Laplacian\footnote{This procedure is refered to as PCR-LE in~\cite{green2021minimax}.} $L_{u}$  achieves the minimax rate, provided that $n^{-1/2}\lesssim M\lesssim n^{s/d}$ under appropriate assumptions, where for two real-valued quantities, $A,B$, the notation $A \lesssim B$ means that there exists a constant $C > 0$ not depending on $f$, $M$ or $n$ such that $A\le CB$ and $A\asymp B$ stands for $A\lesssim B$ and $B\lesssim A$.

\subsection{Assumptions}\label{sec:assump}

We now list the major assumptions that are needed for our theoretical results.

\begin{itemize}
    \item[\namedlabel{a1}{(A1)}] The distribution $G$ is supported on $\mcal{X}$, which is an open, connected, and bounded subset of $\mbb{R}^{d}$ with Lipschitz boundary.
    \item[\namedlabel{a2}{(A2)}] The distribution $G$ has a density $g$ on $\mcal{X}$ such that
    \begin{align*}
        0<g_{\min}\le g(x)\le g_{\max}<\infty,\ \text{for all}\ x\in \mcal{X},
    \end{align*}
    for some $g_{min},g_{\max}>0$. Additionally, $g$ is Lipschitz on $\mcal{X}$ with Lipschitz constant $L_{g}>0$.
    \item[\namedlabel{a3}{(A3)}] The kernel $\eta$ is a non-negative, monotonically non-decreasing function supported on the interval $[0,1]$ and its restriction on $[0,1]$ is Lipschitz and for convenience, we assume $\eta(1/2)>0$ and define
    \begin{align*}
    \sigma_{0}:=\int_{\mbb{R}^{m}}\eta(\|x\|)dx,\quad \sigma_{1}:=\frac{1}{d}\int_{\mbb{R}^{m}}\|y\|^{2}\eta(\|y\|)dy.
    \end{align*}
    Without loss of generality, we will assume $\sigma_0=1$ from now on.

    \item[\namedlabel{a4}{(A4)}] The kernel $\eta$ satisfies a kernel VC-type condition as follows. Let
    \begin{align*}
    \mathscr{K}:=\left\{y\rightarrow \eta\left(\frac{x-y}{\epsilon}\right): \epsilon>0,x\in\mbb{R}\right\}
    \end{align*}
    be the collection of kernel functions indexed by $x$ and $\epsilon$. For a density $\rho$, let the $L^{2}(\mcal{X},\rho)$-covering number $N(\epsilon, {\mathscr{K}}, \|\cdot\|_{L^{2}(\mcal{X},\rho)})$ of ${\mathscr{K}}$ be the smallest number of $L^{2}(\mcal{X},\rho)$-balls of radius $\epsilon$ needed to cover $\mathscr{K}$.
    With that we say that $\eta$ satisfies the kernel VC-type condition if there exist constants $A,\nu>0$ such that
    \begin{align}\label{VC}
    \sup_{\rho}~N(\zeta,\mathscr{K},\|\cdot\|_{L^{2}(\mcal{X},\rho)})\le \left(\frac{A}{\zeta}\right)^{\nu},
    \end{align}
    See Remark \ref{kernelvc} for some examples.
\end{itemize}

Assumptions \ref{a1} and \ref{a2} are mild assumption on the density function, which are also made in~\cite{green2021minimax}. In particular \ref{a2} is important for us, as it gives us the norm equivalence between the various families of weighted Sobolev spaces. Assumption \ref{a3} is a standard normalization condition made on the smoothing kernel, also made in~\cite{green2021minimax}. Assumption \ref{a4} is \textit{not} used in~\cite{green2021minimax}. It is used here because the general family of weighted Laplacian matrices that we work with involve kernel density estimation normalization, with which the normalization in \eqref{weightedgraph} will not tend to either infinity or zero. Also note that in general condition $(\ref{VC})$ involves the $L^{2}(\mcal{X},\rho)$-norm of an envelope function $\eta_0$ for $\mathscr{K}$, i.e. of a function $\eta_0 \le h$ for all $h \in \mathscr{K}$. Since, by our assumptions, $\eta$ is bounded, we can use the maximum of $\eta$ as an envelope, for which the $L^{2}(\mcal{X},\rho)$-norm obviously does not depend on $\rho$ and can thus be absorbed by the constant $A$. 

\subsection{Non-adaptive rates}\label{mainthm}
In the following, we present the non-adaptive minimax optimal rate of convergence of the PCR-WLE estimator in Section \ref{modelandalg} for $s=1$ and $s>1$ separately. These rates are non-adaptive as the choice of $K$ and $\epsilon$ depends on unknown problem parameters, the smoothness parameter $s$ and the norm parameter $M$. 

\begin{Theorem}[Non-adaptive minimax rate of PCR-WLE algorithm]\label{firstorder}
    Assume \ref{a1}-\ref{a4}.
    \begin{itemize}
    \item [(a)] For $s\in \mbb{N}_{+}\backslash\{1\}$, assume $f\in H_{0}^{s}(\mcal{X},g;M)$, $f\in H^{1}(\mcal{X},g;M)$ and $g\in C^{s-1}(\mcal{X})$. Suppose there exist constants $c_0,C_0>0$ such that 
    \begin{align}\label{coneps}
    c_0\Bigg(\left(\frac{\log n}{n}\right)^{\frac{1}{d}}\vee &\;(M^{2}n)^{-\frac{1}{2(s-1)+d}}\Bigg)\le \,\epsilon \le C_0 K^{-\frac{1}{d}}, \nonumber\\[3pt]
        \text{\rm and}\hspace*{2cm}\nonumber\\[-10pt]%
        &\hspace*{0.5cm}\sqrt{\frac{|\log \epsilon |}{n\epsilon^{d}}}\rightarrow 0,
    \end{align}
    where 
    \begin{align}\label{conK}
        K=\min\left\{\lfloor (M^{2}n)^{\frac{d}{2s+d}}\rfloor \vee 1,n\right\}.
    \end{align}
    Then, there exist constants $c,C>0$ not depending on $f,M$ or $n$ such that for $n$ large enough and any $0<\delta<1$, we have:
    \begin{align*}
        \|\hat{f}-f\|_{w,n}^{2}\le C\big\{\big(\delta^{-1}M^2(M^2n)^{-\frac{2s}{2s+d}}\wedge 1\big) \vee {n^{-1}}\big\},
    \end{align*}
with probability at least $1-\delta-Cne^{-cn\epsilon^{d}}-e^{-K}$.
    \item [(b)] For $s=1$, assume $f\in H^{1}(\mcal{X},g;M)$. Suppose there exist constants $c_0,C_0>0$ such that 
    \begin{align*}
         c_0\left(\frac{\log n}{n}\right)^{\frac{1}{d}}&\le \epsilon \le C_0  K^{-\frac{1}{d}},
    \end{align*}
    and \eqref{coneps}, where $K$ is given in \eqref{conK} for $s=1$. Then, the assertion in part (a) also holds for $s=1$.
\end{itemize}
\end{Theorem}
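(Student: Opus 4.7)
The plan is to establish the claimed rate via a bias--variance decomposition of the spectral projection estimator, then bound the variance by Gaussian concentration and the bias through a Dirichlet-form / eigenvalue comparison between $L_{w,n,\epsilon}$ and the continuum operator $\mcal{L}_w$. Since $V_K V_K^T$ is the $\langle\cdot,\cdot\rangle_{w,n}$-orthogonal projection onto $\mathrm{span}\{v_1,\ldots,v_K\}$ and $Y=f+\varepsilon$, we get the orthogonal decomposition
\begin{align*}
\|\hat f-f\|_{w,n}^{2}=\|V_K V_K^T\varepsilon\|_{w,n}^{2}+\|(I-V_K V_K^T)f\|_{w,n}^{2}.
\end{align*}
Under \ref{a2}, $\|\cdot\|_{w,n}\asymp\|\cdot\|_n$ with constants depending only on $(g_{\min},g_{\max},p,r)$, so one may pass between the two freely throughout.

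For the variance, orthonormality of $v_1,\ldots,v_K$ in $\langle\cdot,\cdot\rangle_{w,n}$ gives $\|V_K V_K^T\varepsilon\|_{w,n}^{2}=\sum_{k=1}^K\langle v_k,\varepsilon\rangle_{w,n}^{2}$, each summand a centered Gaussian with variance of order $1/n$. A chi-squared tail bound then produces $\|V_K V_K^T\varepsilon\|_{w,n}^{2}\lesssim K/n$ with probability at least $1-e^{-K}$. Plugging in $K\asymp(M^{2}n)^{d/(2s+d)}$ yields the variance contribution $M^{2}(M^{2}n)^{-2s/(2s+d)}$, and the $n^{-1}$ cap in the statement absorbs the corner cases where $K$ is saturated at $1$ or at $n$.

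The bias term is the substantive part. I would work through the continuum eigensystem $(\mu_k,\phi_k)$ of $\mcal{L}_w$ on $H_{0}^{s}(\mcal{X},g)$ (or $H^{1}(\mcal{X},g)$ when $s=1$, which is why the zero-trace hypothesis is imposed only for $s\ge 2$). Three ingredients are combined. \emph{(i)} Using the Giné--Guillou-type KDE concentration enabled by \ref{a4}, the random degree powers $d_i^{(1-p)/(q-1)}$ and $d_i^{-r/(q-1)}$ appearing in $L_{w,n,\epsilon}$ are shown to be uniformly well approximated, over all $i$, by deterministic functions of $g(X_i)$; this is the source of the $ne^{-cn\epsilon^{d}}$ failure term and of the lower bound $\epsilon\gtrsim(\log n/n)^{1/d}$. \emph{(ii)} With these normalizations under control, a bilinear-form comparison shows that the Dirichlet energy $\langle u,L_{w,n,\epsilon}u\rangle_{w,n}$, evaluated on smooth test functions restricted to $\{X_i\}$, converges to the weighted Dirichlet energy associated with $\mcal{L}_w$; combining with the eigenvalue argument of \cite{calder2022improved} (generalized to the weighted setting) and Weyl's law for $\mcal{L}_w$ gives $\lambda_k\asymp\mu_k\asymp k^{2/d}$ for $k\lesssim K$ under the $\epsilon$-range in \eqref{coneps}. \emph{(iii)} For $f\in H^{s}(\mcal{X},g;M)$, the coefficients $c_k=\langle f,\phi_k\rangle_{g^{p-r}}$ satisfy $\sum_k\mu_k^{s}c_k^{2}\lesssim M^{2}$ (the intrinsic characterization of the weighted Sobolev norm through $\mcal{L}_w$), so $\sum_{k>K}c_k^{2}\lesssim\mu_K^{-s}M^{2}\asymp M^{2}K^{-2s/d}$. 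A subspace-perturbation argument, using the eigengap $\mu_{K+1}-\mu_K\asymp K^{2/d-1}$ and the convergence in \emph{(ii)}, transfers this bound to the discrete projection, giving $\|(I-V_K V_K^T)f\|_{w,n}^{2}\lesssim M^{2}K^{-2s/d}$ on the good event. Balancing at $K\asymp(M^{2}n)^{d/(2s+d)}$ matches the variance contribution and yields the stated rate.

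The main technical obstacle is upgrading the analysis from the purely combinatorial unnormalized case treated in \cite{green2021minimax} to the full $(p,q,r)$-family: one must track the data-driven normalizations $d_i^{(1-p)/(q-1)}, d_i^{-r/(q-1)}$ uniformly over all sample points and funnel them through the Dirichlet-form comparison while preserving self-adjointness against the correct weighted inner product $\langle\cdot,\cdot\rangle_{w,n}$. This is exactly the step that necessitates \ref{a4} and the sharp lower bound on $\epsilon$ in \eqref{coneps}, and it drives the $ne^{-cn\epsilon^{d}}$ term in the failure probability. A secondary, more routine, obstacle is setting up the weighted Weyl asymptotics under the Neumann-type boundary condition in \eqref{weightedop}, which follows from standard elliptic theory on $H^{s}(\mcal{X},g)$. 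Part (b) uses the same proof with $s=1$, where the zero-trace assumption can be dropped and the Sobolev characterization simplifies accordingly.
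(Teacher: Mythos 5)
Your decomposition, variance bound, degree/KDE control via \ref{a4}, and the eigenvalue comparison through Weyl's law all match the paper's architecture. The genuine gap is in your step \emph{(iii)}: the transfer of the continuum tail bound $\sum_{k>K}c_k^2\lesssim M^2K^{-2s/d}$ to the discrete projection via a ``subspace-perturbation argument using the eigengap $\mu_{K+1}-\mu_K\asymp K^{2/d-1}$''. Weyl's law gives only the average spacing $K^{2/d-1}$; individual gaps of the weighted Neumann operator can be far smaller or zero (eigenvalue multiplicities and clustering are generic for $d\ge 2$), so the asserted gap lower bound is unjustified, and any Davis--Kahan-type transfer would need quantitative convergence of the first $K$ \emph{eigenvectors} of $L_{w,n,\epsilon}$, uniformly for $K\asymp(M^2n)^{d/(2s+d)}$ growing polynomially in $n$, with inverse-gap factors that are not available. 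This is precisely what the paper's proof is designed to avoid: it never compares eigenvectors. Instead it bounds the bias purely discretely, $\|\mbb{E}\hat f-f\|_{w,n}^2\le \langle L_{w,n,\epsilon}^{s}f,f\rangle_{w,n}/\lambda_{K+1}^{s}$, so that only an eigenvalue lower bound (Lemma \ref{eigenref} plus Proposition \ref{refWeyl}, using $\epsilon\lesssim K^{-1/d}$) and a moment bound on the empirical seminorm are needed.

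That moment bound is the substantive work your proposal omits entirely: $\mbb{E}\langle L_{w,n,\epsilon}^{s}f,f\rangle_{w,n}\lesssim M^2$ is obtained by comparing with the non-local operator $L_{w,\epsilon}$ (via the KDE concentration), decomposing the higher-order seminorm into a U-statistic over distinct indices plus a repeated-index term controlled through connected subgraphs, and bounding $\langle L_{w,\epsilon}^{s}f,f\rangle_{g^{p-r}}\lesssim M^2$ by Taylor expansion in the interior and a separate boundary-layer estimate in which the zero-trace assumption $f\in H_0^{s}$ is actually used (Lemmas \ref{lemma61}--\ref{lemma62}). Your appeal to an ``intrinsic characterization of the weighted Sobolev norm through $\mcal{L}_w$'' would require its own proof (boundary compatibility for $s\ge2$ under the Neumann-type condition in \eqref{weightedop}), i.e., a continuum analogue of that same work. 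A telltale sign of the divergence is that your argument produces no mechanism for the $\delta^{-1}$ factor and the $1-\delta$ probability in the statement: in the paper these arise from Markov's inequality applied to the empirical seminorm, a step your route does not contain. As written, the bias part of your proof does not go through.
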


\begin{Remark}
    Notably, the above theorems do not require the assumption that $s>d/2$. As we mentioned before in Section \ref{modelandalg}, this condition is commonly appeared in the literature as in the sub-critical regime, i.e., $s\le d/2$, the (weighted) Sobolev space $H^{s}$ is not a Reproducing Kernel Hilbert Space (RKHS) and cannot be continuously embedded into the space of continuous functions $C^{0}(\mcal{X})$. Theorem \ref{firstorder} highlights the point that PCR-WLE algorithm obtains the minimax optimal rate when $n^{-1/2}\lesssim M\lesssim n^{s/d}$ and the error is measured by the weighted empirical norm $\|\cdot\|_{w,n}$. 
\end{Remark}

\begin{Remark}\label{kernelvc}
    The kernel VC-type condition was first proposed in \cite{gine2002rates}. A simple sufficient condition for this condition to hold is that $\eta$ is of bounded variation; see \cite{nolan1987Ustatistics} or \cite{gine2021mathematical}. Clearly, many  common kernels are of this type, including Gaussian, Epanechnikov and cosine kernels.
\end{Remark}

\begin{Remark}
    For practical consideration, there are two tuning parameters: the graph radius (the bandwidth for the kernel $\eta$) $\epsilon$ and the number of eigenvalues $K$. The lower bound for $\epsilon$ makes sure that with this smallest radius, the resulting weighted graph will still be connected with high probability and the upper bound for $\epsilon$ ensures the eigenvalue of the weighted graph Laplacian \eqref{weightedgraph} to be of the same order as its  continuum version, the eigenvalue of the weighted Laplacian operator \eqref{weightedop} (Weyl's law). The asymptotic assumption on $\epsilon$ is from the concentration of the KDE. The condition on $K$ is set to trade-off bias and variance. Both $\epsilon$ and $K$ depend on the true smoothness parameter $s\in\mbb{N}_{+}$. 
\end{Remark}

\subsection{Adaptive rates via Lepski's method}\label{seclepski}

Despite the minimax optimality of the PCR-WLE algorithm shown in Section \ref{mainthm}, the main practical difficulties are the choice of several tuning parameters including the bandwidth parameter (or the graph radius) $\epsilon$ and the number of eigenvalues $K$, because optimal choices depend on the \textit{unknown} true smoothness parameter $s$ of the regression function $f$ in the model \ref{regressmodel}. Moreover, $K$ also relies on the bound of the weighted Sobolev norm $M$. This naturally brings about the issue of adaptation, which we address using  Lepski's method. Note that, as we are concerned with in-sample estimation error, other techniques like cross-validation are not directly applicable to set the tuning parameters. 

Since its introduction in \cite{lepskii1991problem}, Lepski's method has been widely used for adaptive estimation and testing in various statistical contexts; e.g. see \cite{birge2001alternative,chichignoud2016practical,bellec2018slope,balasubramanian2021optimality,lacour2016minimal}. In the following, we consider Lepski's method on the product space of the smoothness parameter $s\in\mbb{N}_{+}$ and the constraint on the weighted Sobolev norm $M\in \mbb{R}_{+}$.

Recall that $s$ and $M$ denote the true smoothness parameter and the norm parameter, respectively for the weighted Sobolev norm of $f$. Here, we actually take $M$ as the minimum over all bounds of the weighted Sobolev norm. We start by  picking $s_{\min},s_{\max} \in \mbb{N}_{+}$; here we can set $s_{\min}=1$ under no availability of further information\footnote{If there is additional information, like $s>10$, one can pick $s_{\min}=10$. Hence, we present out result with a generic $s_{\min}$.} regarding the knowledge of $s$. The goal is that $s_{\max}$ is large enough that  $s\in\mbb{N}_{+}$ satisfies $s\in[s_{\min},s_{\max}]$. Similarly, we pick $M_{\min},M_{\max}$ satisfying $0<M_{\min}<M_{\max}<\infty$, where $M_{\min}$ and $M_{\max}$ are small and large enough respectively such that $M\in [M_{\min},M_{\max}]$. Next, define the grid $\mcal{B}\times \mcal{D}:=\{(s_j,M_j)\}_{j=1}^{N_l}$ given by:
\begin{align}\label{eq:sinterval}
\begin{aligned}
    \mcal{B}\coloneqq[s_{\min},s_{\max}]\cap \mbb{N}_{+} =\{s_{\min}=:s_1<s_2<\ldots<s_{N_{l}}:=s_{\max}\},
\end{aligned}
\end{align}
and 
\begin{align*}
    \mcal{D}\coloneqq[M_{\min},M_{\max}]=\{M_{\min}=:M_1<M_2<\ldots<M_{N_{l}}:=M_{\max}\},
\end{align*}
where $N_{l}\asymp \log n$.

For any pair $(\tilde{s},\tilde{M})$ in the above grid, let $\hat{f}_{\tilde{s},\tilde{M}}$ be the PCR-WLE estimator in Section \ref{modelandalg} corresponding to the parameters $\tilde{s}$ and $\tilde{M}$. We define the Lepski's estimator as
\begin{align*}
    \hat{f}_{\adapt}:=\hat{f}_{\hat{s},\hat{M}},
\end{align*}
where $\hat{s}$ is given by
\begin{align*}
    \hat{s}:=\max \{\tilde{s}\in \mcal{B}: \|\hat{f}_{\tilde{s},\tilde{M}}-\hat{f}_{\tilde{s}',\tilde{M}'}\|_{w,n}  \le c_0 \tilde{M}'((\tilde{M}'^2 n/\log n)^{-\frac{\tilde{s}'}{2\tilde{s}'+d}}\ ,\forall \tilde{s}'\le \tilde{s}, \tilde{s}'\in\mcal{B}\},
\end{align*}
and $\hat{M}$ is the corresponding couple of $\hat{s}$ in the grid, where $c_0>0$ is some finite constant. Here, we formulate the above simultaneous Lepski's method by coupling the smoothness parameter and the norm parameter and only maximize through the smoothness parameter instead of dealing with a joint maximization, which is not needed for our purpose of showing the adaptive minimax rate in the following result as our focus is its convergence rate in $n$.

The following result presents a near minimax optimal rate of convergence of the Lepski's estimator $\hat{f}_\adapt$ up to a logarithmic factor in $n$.

\begin{Theorem}\label{Lepski}
    Assume \ref{a1}-\ref{a4} and $g\in C^{s-1}(\mcal{X})$. Also, assume $f\in H^{1}(\mcal{X},g;M) \cap H_{0}^{s}(\mcal{X},g;M)$ and  $f_g:=f/g^r$ is $M$-Lipschitz, i.e., $\|f_{g}(x)-f_{g}(x')\|\le M\|x-x'\|$ for any $x,x'\in \mcal{X}$. Furthermore, assume that (for large enough $n$) we have $s \in [s_{\min},s_{\max}]$ and $M \in [M_{\min},M_{\max}].$ Then, under the minimax optimal setting in Theorem \ref{firstorder} for $M$, i.e., $n^{-1/2}\lesssim M\lesssim n^{s/d}$, the estimator $\hat{f}_{\adapt}$ satisfies: For $n$ large enough and any $\delta\in (0,1)$, there exists some constant $C>0$ such that 
    \begin{align*}
    \|\hat{f}_{\adapt} -f\|_{w,n}^{2}\le C\delta^{-1}M^2(M^2n/\log n)^{-\frac{2s}{2s+d}},
    \end{align*}
    with probability at least 
    \begin{align*}
    1-\delta \log^{-\frac{2s}{(2s+d)}}n -Cn e^{-Cn\epsilon^{d}}\log^2 n -16C c_0^{-4} n^{-1}\log^{2-\frac{2s_{\min}}{(2s_{\min}+d)}} n-e^{-\lfloor M_{\min}^2 n \rfloor^{\frac{d}{(2s+d)}}}\log^2 n.
    \end{align*}
\end{Theorem}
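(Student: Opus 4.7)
The plan is to execute the classical Lepski paradigm on the simultaneous two-parameter grid $\mathcal{B}\times\mathcal{D}$, treating the non-adaptive conclusion of Theorem~\ref{firstorder} as an oracle bound that is available at every grid point. Since $\mathcal{B}$ is the integer grid on $[s_{\min},s_{\max}]$ and $s\in\mathcal{B}$, and since $\mathcal{D}$ covers $[M_{\min},M_{\max}]$ at resolution $O(\log n)$, I can identify $(s,M)$ with a legitimate grid point up to a constant multiplicative factor absorbed into $C$. Write $\tau_{s',M'} := c_0 M' (M'^2 n/\log n)^{-s'/(2s'+d)}$ for the Lepski threshold at grid point $(s',M')$. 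The argument splits according to the events $\{\hat s \ge s\}$ and $\{\hat s < s\}$.

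On the event $\{\hat s \ge s\}$, the definition of $\hat s$ applied with $\tilde s = \hat s$ and $\tilde s' = s$ gives $\|\hat f_{\adapt} - \hat f_{s,M}\|_{w,n} \le \tau_{s,M}$, and the triangle inequality combined with Theorem~\ref{firstorder} applied at $(s,M)$ yields
\[
\|\hat f_{\adapt} - f\|_{w,n} \,\lesssim\, \tau_{s,M} + M(M^2 n)^{-s/(2s+d)} \,\lesssim\, \tau_{s,M},
\]
which is the advertised rate after squaring. The complementary event $\{\hat s < s\}$ forces the existence of some grid point $\tilde s'\le s$ (with coupled $\tilde M'$) such that $\|\hat f_{s,M} - \hat f_{\tilde s',\tilde M'}\|_{w,n} > \tau_{\tilde s',\tilde M'}$. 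By the triangle inequality, Sobolev nesting ($f\in H_0^s(\mathcal{X},g;M)\subset H_0^{\tilde s'}(\mathcal{X},g;\tilde M')$ on a bounded domain with $\tilde M' \lesssim M$), and the monotonicity of the non-adaptive rate in smoothness, this bad event is contained in a tail event where some $\|\hat f_{\tilde s',\tilde M'} - f\|_{w,n}$ exceeds its own oracle rate by a factor of at least $(\log n)^{\tilde s'/(2\tilde s'+d)}$. After a union bound over the $N_l = O(\log n)$ grid points, it suffices to control each such tail event sharply.

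The hard part is exactly this last control: the in-probability bound supplied by Theorem~\ref{firstorder} decays only polynomially in $\delta^{-1}$ (a Markov-type bound) and therefore cannot by itself absorb a $\log n$-factor slack. The key technical device, and the principal obstacle, is a high-order-moment concentration inequality for the weighted Sobolev semi-norm, listed among the paper's stated contributions: bounding the $p$-th moment of $\|\hat f_{\tilde s',\tilde M'} - f\|_{w,n}^{2}$ for $p\asymp\log n$ converts the $\delta^{-1}$ factor into a $(\log n)$-power via Markov at the $p$-th power, delivering (sub-)exponential tails. This upgrade splits into a bias contribution, handled through moment control of the truncation residual using the eigenvalue/Weyl's law comparison developed in the proof of Theorem~\ref{firstorder} together with the $M$-Lipschitz assumption on $f_g = f/g^r$, and a variance contribution $\|V_K V_K^{\top}\varepsilon\|_{w,n}$, which is a $\chi^2$-type quantity in $K$ degrees of freedom and admits standard Laurent--Massart-type concentration.

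Finally, I would assemble the pieces by union-bounding over the $N_l^2 = O(\log^2 n)$ pairs of grid points; this produces the $\log^2 n$ multipliers on the graph-connectivity term $ne^{-Cn\epsilon^d}$ and on the eigenvector concentration term $e^{-K}$ that appear in the stated probability budget, while the $\log^{-2s/(2s+d)} n$ factor multiplying $\delta$ and the term $n^{-1}\log^{2-2s_{\min}/(2s_{\min}+d)} n$ are the direct traces of the high-moment upgrade (the latter arising from the residual $n^{-1}$ floor in Theorem~\ref{firstorder} summed across the grid). The term $e^{-\lfloor M_{\min}^2 n\rfloor^{d/(2s+d)}}\log^2 n$ corresponds to the $e^{-K}$ contribution evaluated at the smallest effective $K$ on the grid, confirming the stated failure probability and completing the plan.
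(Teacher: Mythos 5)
Your overall skeleton is the same as the paper's: split on whether the selected smoothness is at least the true one, use the Lepski threshold plus the non-adaptive Theorem~\ref{firstorder} on the good branch, and on the bad branch reduce to tail bounds for $\|\hat f_{\tilde s',\tilde M'}-f\|_{w,n}$ at coarser grid points, then union bound over the $O(\log n)$ (resp.\ $O(\log^2 n)$) grid, with the $e^{-K}$ and graph-connectivity terms accounted for as you describe. The good-event branch is correct and matches the paper.

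The gap is in the bad-event branch, which is exactly the step you label as ``the hard part'' and then do not carry out. You correctly observe that the first-moment Markov bound behind Theorem~\ref{firstorder} is too weak, but your proposed remedy --- an unproved bound on moments of order $p\asymp\log n$ of $\|\hat f_{\tilde s',\tilde M'}-f\|_{w,n}^2$ yielding sub-exponential tails --- is both unsubstantiated and mischaracterizes where the gain must come from. The available slack is only a $(\log n)^{2\tilde s'/(2\tilde s'+d)}$ factor, so sharpening the dependence on $\delta$ alone cannot make the per-grid-point failure probability decay in $n$; what is needed is a moment bound that itself carries an extra factor of $n^{-1}$. The paper obtains precisely this: it bounds $\Var\langle L_{w,n,\epsilon}f,f\rangle_{w,n}\lesssim M^4/n$ by expanding the double sum into covariances $\mathrm{Cov}(V_{ij},V_{lm})$, splitting according to $|\{i,j,l,m\}|\in\{1,2,3,4\}$, and using the $M$-Lipschitz assumption on $f_g$ to get $\mathbb{E}[V_{ij}V_{im}]\lesssim \epsilon^{2d+4}M^4$ and $\mathbb{E}[V_{ij}^2]\lesssim\epsilon^{d+4}M^4$ (this is where the Lipschitz hypothesis enters --- not in the bias/Weyl's-law part, as you suggest); Chebyshev then gives $\mathbb{P}\bigl(\langle L_{w,n,\epsilon}^s f,f\rangle_{w,n}\gtrsim \delta^{-1}M^2\bigr)\lesssim \delta^2/n$, which, with $\delta^{-1}\asymp(\log n)^{2\tilde s'/(2\tilde s'+d)}$, makes each bad event have probability of order $n^{-1}$ times logarithmic factors and produces the $n^{-1}\log^{2-2s_{\min}/(2s_{\min}+d)}n$ term in the stated budget. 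Your plan never derives any such $n$-decaying moment bound (second or higher), and extending it from $s=1$ to general $s$ (as the paper does via the Lemma~\ref{lemma62}-type recursion) is also left unaddressed; as written, the $\{\hat s<s\}$ branch therefore cannot be closed.
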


\begin{Remark}\label{rempoly}
\cite{trillos2022rates} proposed a graph poly-Laplacian regularization approach, where integer powers of the Laplacian matrices are used as regularization in a least-squares context. They showed that the proposed method achieves rate of convergence of order $n^{-s/(d+4s)}$. While the rate is not optimal, in comparison to the~\cite{green2021minimax} their estimator does not require the knowledge of the norm parameter $M$ to achieve the derived rate (although they require the knowledge of $s$). In comparison to both the above works, our result in Theorem~\ref{Lepski} achieves the optimal rate, up to $\log$ factors, without requiring the knowledge of either $s$ or $M$.
\end{Remark}

\begin{Remark}
    As a part of our proof, a better concentration inequality for the non-adaptive PCR-WLE estimator $\hat{f}$ is required compared to Theorem \ref{firstorder}, for which the assumption that $f_g$ is Lipschitz is required. As also discussed in  \cite[see below Theorem 1]{green2021minimax}, it remains open whether a weaker assumption or even the weighted Sobolev condition $\|\nabla f_g\|_{L^{2}}<\infty$ alone might be sufficient establish the required concentration result for developing adaptive procedures.
\end{Remark}

\section{PROOF}

\subsection{Proof of Theorem \ref{firstorder}}

In this section, we will prove both Theorem \ref{firstorder} for $s=1$ and $s>1$ together. We first present and prove some auxiliary lemmas. We will denote by $B_x(r)$ a closed Euclidean ball with midpoint $x$ and radius $r\ge 0$.

Define the weighted Sobolev seminorm $\langle L_{w,\epsilon}f,f\rangle_{g^{p-r}}$ given by the following non-local operator:
\begin{align*}
    L_{w,\epsilon}f(x):=\frac{1}{\epsilon^{d+2}}\int_{\mcal{X}}g(x)^{1-p}\frac{\eta\big(\frac{\|x-z\|}{\epsilon}\big)}{g(x)^{1-q/2}g(z)^{1-q/2}}(g(x)^{-r}f(x)-g(z)^{-r}f(z))g(z)dz,
\end{align*}
where according to \eqref{def:weightedL}, $L_{w,\epsilon}$ can be viewed as a population counterpart of the discrete graph weighted Laplacian $L_{w,n,\epsilon}$. As in \cite{green2021minimax}, we also call it a `non-local' version. Note that the above non-local weighted Sobolev seminorm and non-local operator generalize the definitions in \cite{green2021minimax} as the latter belong to a special case when $(p,q,r)=(1,2,0)$. The following Lemmas \ref{lemma61}-\ref{lemma62} therefore extend their counterparts in \cite{green2021minimax} to the weighted Laplacians and the weighted Sobolev seminorm. \textcolor{black}{Note that in our proofs, we also highlight and fix several important typos and errors that appeared in \cite{green2021minimax}. Despite the errors, the final results in \cite{green2021minimax} remain true}.

\begin{Lemma}\label{lemma61}
    For $f\in H^{1}(\mcal{X},g;M)$, we have
    \begin{align*}
        \langle L_{w,\epsilon}f,f\rangle_{g^{p-r}}\lesssim M^{2}.
    \end{align*}
\end{Lemma}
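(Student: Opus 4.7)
The plan is to reduce $\langle L_{w,\epsilon}f,f\rangle_{g^{p-r}}$ to a non-local Dirichlet form in $f_g:=f/g^r$, then to the standard Sobolev seminorm of $f_g$ on $\mcal{X}$, and finally to $M^2$. First I would substitute $f=g^{r}f_g$ inside $\int_{\mcal{X}} L_{w,\epsilon}f(x)\,f(x)\,g(x)^{p-r}\,dx$ and track the $g$-powers: on the $x$-side, the factors $g(x)^{1-p}\cdot g(x)^{p-r}\cdot g(x)^{r}/g(x)^{1-q/2}$ collapse to $g(x)^{q/2}$, and similarly on the $z$-side, $g(z)/g(z)^{1-q/2}=g(z)^{q/2}$. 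Symmetrizing in $x\leftrightarrow z$, using the symmetry of $\eta(\|x-z\|/\epsilon)$ and of $g(x)^{q/2}g(z)^{q/2}$, yields the non-negative expression
\[
\langle L_{w,\epsilon}f,f\rangle_{g^{p-r}} = \frac{1}{2\epsilon^{d+2}}\int_{\mcal{X}}\int_{\mcal{X}} \eta\!\left(\frac{\|x-z\|}{\epsilon}\right)(f_g(x)-f_g(z))^{2}\,g(x)^{q/2}g(z)^{q/2}\,dz\,dx.
\]
By \ref{a2}, $g(x)^{q/2}g(z)^{q/2}$ is bounded uniformly by a constant depending only on $g_{\min},g_{\max},q$, reducing the problem to bounding the unweighted non-local form in $f_g$.

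Next I would pass from this non-local form to $\|\nabla f_g\|_{L^2(\mcal{X})}^{2}$. Since $\mcal{X}$ is not assumed convex, the segment from $x$ to $z$ may leave $\mcal{X}$, so I would first use the Lipschitz-boundary condition of \ref{a1} to invoke the Sobolev extension theorem, obtaining $\tilde f_g\in H^{1}(\mbb{R}^d)$ with $\tilde f_g|_{\mcal{X}}=f_g$ and $\|\nabla \tilde f_g\|_{L^2(\mbb{R}^d)}\lesssim \|\nabla f_g\|_{L^2(\mcal{X})}$. The fundamental theorem of calculus applied to $\tilde f_g$ along the segment from $x$ to $z$, combined with Cauchy--Schwarz, then gives
\[
(f_g(x)-f_g(z))^{2}\le \|z-x\|^{2}\int_{0}^{1}|\nabla\tilde f_g(x+t(z-x))|^{2}\,dt.
\]
Substituting into the double integral, changing variables $z=x+\epsilon u$ (so $dz=\epsilon^{d}du$, $\|z-x\|^2=\epsilon^2\|u\|^2$, and $u\in B_{0}(1)$ since $\eta$ is supported on $[0,1]$), enlarging the $x$-integral from $\mcal{X}$ to $\mbb{R}^d$ (the integrand is non-negative), and applying Fubini, the bound factors as
\[
\left(\int_{B_{0}(1)}\eta(\|u\|)\|u\|^{2}\,du\right)\|\nabla\tilde f_g\|_{L^2(\mbb{R}^d)}^{2} = d\,\sigma_{1}\,\|\nabla\tilde f_g\|_{L^2(\mbb{R}^d)}^{2},
\]
with the first factor finite by \ref{a3}. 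Finally, \ref{a2} lets me pass between $\|\cdot\|_{L^{2}(\mcal{X})}$ and $\|\cdot\|_{L^{2}(\mcal{X},g^{p})}$ at the cost of constants, so $\|\nabla\tilde f_g\|_{L^2(\mbb{R}^d)}^{2}\lesssim \|\nabla f_g\|_{L^{2}(\mcal{X},g^p)}^{2}\le |f|_{H^{1}(\mcal{X},g)}^{2}\le \|f\|_{H^{1}(\mcal{X},g)}^{2}\le M^{2}$, which is the claim.

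The main obstacle is the boundary: on a non-convex domain, the line segment from $x\in\mcal{X}$ to $z\in\mcal{X}$ can pass outside $\mcal{X}$, so the segment-integral step is not valid for $f_g$ alone. This is exactly where the Lipschitz-boundary hypothesis \ref{a1} is essential, as it provides a bounded Sobolev extension with controlled $H^{1}$-norm; everything else (bookkeeping of $g$-powers, symmetrization, and the kernel second-moment computation via \ref{a3}) is routine.
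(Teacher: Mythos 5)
Your proposal is correct and follows essentially the same route as the paper: symmetrize $\langle L_{w,\epsilon}f,f\rangle_{g^{p-r}}$ into the non-local Dirichlet form in $f_g$ with weights $g(x)^{q/2}g(z)^{q/2}$, handle the boundary via a Sobolev extension (you extend $f_g$ to $\mbb{R}^d$, the paper extends $f$ in $H^{1}(\cdot,g)$ to a slightly larger bounded $\Omega$), apply the fundamental theorem of calculus along segments with Cauchy--Schwarz, change variables, and use the kernel second moment together with \ref{a2} to land on $|f|_{H^1(\mcal{X},g)}^2\le M^2$. The only cosmetic differences are that the paper gets the sharp constant $\sigma_1$ via rotational invariance while your Cauchy--Schwarz step gives $d\sigma_1$ (immaterial for a $\lesssim$ bound), and the paper makes the smoothness needed for the segment formula explicit through a density argument, which you should tacitly include (mollification or the ACL characterization of $H^1$).
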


\begin{proof}[Proof of Lemma \ref{lemma61}]
    Following the idea of \cite[Proof of Lemma 1]{green2021minimaxsmoothing}, take $\Omega$ as an arbitrary bounded open set such that $B_{x}(c_0)\subseteq \Omega$ for all $x\in\mcal{X}$ for some $c_0>0$ and we can assume that $f\in H^{1}(\Omega,g)$ and $\|f\|_{H^{1}(\Omega,g)}\lesssim \|f\|_{H^{1}(\mcal{X},g)}$ without loss of generality due to the existence of an extension operator $E:H^{1}(\mcal{X},g)\rightarrow H^{1}(\Omega,g)$ such that $Ef$ satisfies these properties, see Theorem 1 in Chapter 5.4 in \cite{evans2022partial}. Also, since $C^{\infty}(\Omega)$ is dense in $H^{1}(\Omega,g)$ and the integral in Lemma \ref{lemma61} is continuous in $H^{1}(\Omega,g)$, we can assume $f_{g}:=f/g^{r}\in C^{\infty}(\Omega)$ so that
\begin{align*}
    f_{g}(x')-f_{g}(x)=\int_{0}^{1}\nabla f_{g}(x+t(x'-x))^{T}(x'-x)dt.
\end{align*}
Then, we have by symmetry in the first step:
\begin{align}\label{s1decomp}
    &2\langle L_{w,\epsilon}f,f\rangle_{g^{p-r}}
    \nonumber\\&=\frac{1}{\epsilon^{d+2}}\int_{\mcal{X}}\int_{\mcal{X}}\frac{\eta\left(\frac{\|x-y\|}{\epsilon}\right)}{g(x)^{1-q/2}g(y)^{1-q/2}}\left|\frac{f(x)}{g(x)^{r}}-\frac{f(y)}{g(y)^{r}}\right|^{2}g(x)g(y)dxdy\nonumber\\&=\frac{1}{\epsilon^{d+2}}\int_{\mcal{X}}\int_{\mcal{X}}\frac{\eta\left(\frac{\|x-y\|}{\epsilon}\right)}{g(x)^{1-q/2}g(y)^{1-q/2}}\left(\int_{0}^{1}\nabla f_{g}(y+t(x-y))^{T}(x-y)dt\right)^{2}g(x)g(y)dxdy\nonumber\\&\le \frac{1}{\epsilon^{d+2}}\int_{\mcal{X}}\int_{\mcal{X}}\int_{0}^{1}\frac{\eta\left(\frac{\|x-y\|}{\epsilon}\right)}{g(x)^{1-q/2}g(y)^{1-q/2}}\left(\nabla f_{g}(y+t(x-y))^{T}(x-y)\right)^{2}g(x)g(y)dtdxdy\nonumber\\&\le \int_{\mcal{X}}\int_{B_{\mbf{0}}(1)}\int_{0}^{1}\left(\nabla f_{g}(y+\epsilon tz)^{T}z\right)^{2}\frac{\eta(\|z\|)}{g(y+\epsilon z)^{1-q/2}g(y)^{1-q/2}}g(y+\epsilon z)g(y)dtdzdy,\nonumber\\&\qquad \text{with}\ (x-y)/\epsilon=z\nonumber\\&\lesssim  \int_{\mcal{X}}\int_{B_{\mbf{0}}(1)}\int_{0}^{1}\left(\nabla f_{g}(y+\epsilon tz)^{T}z\right)^{2}\eta(\|z\|)g(y+\epsilon tz)^{q}dtdzdy\nonumber\\&\le \int_{\Omega}\int_{B_{\mbf{0}}(1)}\int_{0}^{1}\left(\nabla f_{g}(\tilde{y})^{T}z\right)^{2}\eta(\|z\|)g(\tilde{y})^{q}dtdzd\tilde{y},\quad \tilde{y}=y+\epsilon tz\in \Omega.
\end{align}
Since we have $\left(\nabla f_{g}(\tilde{y})^{T}z\right)^{2}=\left(\sum_{i=1}^{d}(\nabla f_{g}(\tilde{y}))^{(i)}z^{(i)}\right)^{2}$ and $\eta(\|z\|)$ is invariant with respect to the rotation, it yields that
\begin{align}\label{s1decomppart}
    \int_{B_{\mbf{0}}(1)}\left(\nabla f_{g}(\tilde{y})^{T}z\right)^{2}\eta(\|z\|)dz&=\sum_{i,j=1}^{d}(\nabla f_{g}(\tilde{y}))^{(i)}(\nabla f_{g}(\tilde{y}))^{(j)}\int_{B_{\mbf{0}}(1)}z^{(i)}z^{(j)}\eta(\|z\|)dz\nonumber\\&=\sum_{i=1}^{d}\left((\nabla f_{g}(\tilde{y}))^{(i)}\right)^{2}\int_{B_{\mbf{0}}(1)}\left(z^{(i)}\right)^{2}\eta(\|z\|)dz\nonumber\\&=\sigma_{1}\left\|\nabla\left(\frac{f(\tilde{y})}{g(\tilde{y})^{r}}\right)\right\|^{2}.
\end{align}
Plugging \eqref{s1decomppart} in \eqref{s1decomp}, we conclude
\begin{align*}
    2\langle L_{w,\epsilon}f,f\rangle_{g^{p-r}}\lesssim \sigma_1M^{2}.
\end{align*}
This finishes the proof.
\end{proof}

Note that the proof of Lemma \ref{lemma61} also utilized the heuristic arguments given in \cite{hoffmann2022spectral} while we provide a rigorous proof here.

\begin{Lemma}\label{lemma6}
Suppose $f_{g}\in L^{2}(\mcal{U},g^{p+r};M)$ for a Borel set $\mcal{U}\subseteq \mcal{X}$. Then, there exists a constant $C$ which does not depend on $f$ or $M$ such that
\begin{align*}
    \|L_{w,\epsilon}f\|_{L^{2}(\mcal{U},g^{p+r})}\le \frac{C}{\epsilon^{2}}\|f_{g}\|_{L^{2}(\mcal{U},g^{p+r})}.
\end{align*} 
\end{Lemma}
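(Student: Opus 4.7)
The plan is to exploit boundedness of $g$ (Assumption \ref{a2}) to reduce $L_{w,\epsilon}$ to a convolution-type operator and then apply a standard Young/Cauchy--Schwarz argument. Specifically, rewrite $L_{w,\epsilon}f(x)$ using $f_g=f/g^{r}$: the density factors combine as $g(x)^{1-p}g(x)^{-(1-q/2)}g(z)^{-(1-q/2)}g(z)$, which are all bounded uniformly above and below since $g_{\min}\le g\le g_{\max}$. Hence, absorbing those factors into a constant,
\begin{align*}
    |L_{w,\epsilon}f(x)|\le \frac{C}{\epsilon^{d+2}}\int_{\mcal{X}}\eta\Big(\tfrac{\|x-z\|}{\epsilon}\Big)\,|f_g(x)-f_g(z)|\,dz.
\end{align*}
The triangle inequality $|f_g(x)-f_g(z)|\le |f_g(x)|+|f_g(z)|$ then splits the estimate into two pieces $T_1(x)+T_2(x)$, which I would bound separately.

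For $T_1(x)$, the factor $|f_g(x)|$ comes out of the inner integral, and a change of variables $u=(z-x)/\epsilon$ together with $\eta$ being supported on $[0,1]$ gives $\int_{\mcal{X}}\eta(\|x-z\|/\epsilon)\,dz \le \sigma_0\epsilon^{d}$. Thus $T_1(x)\le (C/\epsilon^{2})|f_g(x)|$ pointwise, and squaring and integrating against $g(x)^{p+r}\mathbf{1}_{\mcal U}(x)$ immediately yields a $(C/\epsilon^{4})\|f_g\|_{L^{2}(\mcal{U},g^{p+r})}^{2}$ contribution.

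For $T_2(x)$, I would apply Cauchy--Schwarz in the form
\begin{align*}
T_2(x)^2 \le \frac{C}{\epsilon^{4}}\Big(\frac{1}{\epsilon^{d}}\int_{\mcal X}\eta\Big(\tfrac{\|x-z\|}{\epsilon}\Big)dz\Big)\Big(\frac{1}{\epsilon^{d}}\int_{\mcal X}\eta\Big(\tfrac{\|x-z\|}{\epsilon}\Big)|f_g(z)|^{2}dz\Big),
\end{align*}
treating $K_\epsilon(x,z):=\epsilon^{-d}\eta(\|x-z\|/\epsilon)$ as a sub-probability kernel in $z$ with total mass $\le\sigma_0$. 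Then I integrate in $x$ against $g(x)^{p+r}\mathbf 1_{\mcal{U}}(x)$, swap the order of integration via Fubini, and use boundedness of $g$ once more to bound $\int_{\mcal U} K_\epsilon(x,z)g(x)^{p+r}dx$ by a constant. The result is a bound of the form $(C/\epsilon^{4})\|f_g\|_{L^{2}(\mcal X,g^{p+r})}^{2}$ for this piece. Combining $T_1$ and $T_2$, taking square roots, and appealing to the norm equivalence from \ref{a2} gives the claim with a constant independent of $f$ and $M$.

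The main technical subtlety, rather than any nontrivial obstacle, is bookkeeping the $\mcal{U}$ versus $\mcal{X}$ discrepancy: although $L_{w,\epsilon}f(x)$ integrates $f_g$ over all of $\mcal{X}$, the support condition on $\eta$ means only values of $f_g$ in the $\epsilon$-neighborhood of $x\in\mcal{U}$ enter. One either restricts to $\mcal{U}$ thick enough that $B_x(\epsilon)\subseteq\mcal{U}$ for $x\in\mcal{U}$, or absorbs a harmless enlargement of the domain into the constant $C$; either way $g$ being bounded above and below absorbs all density powers, and no additional assumption on $f_g$ beyond its $L^{2}$ bound is used.
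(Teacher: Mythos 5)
Your core estimate is essentially the paper's own argument: after using \ref{a2} to absorb every power of $g$ into constants, both proofs reduce to the kernel-mass bound $\int\eta(\|x-z\|/\epsilon)\,dz\lesssim\epsilon^{d}$, a Cauchy--Schwarz step, and a Fubini/symmetry step; your split into $T_1+T_2$ via the triangle inequality is just the paper's single Cauchy--Schwarz applied to $(f_g(x)-f_g(z))^2\le 2|f_g(x)|^2+2|f_g(z)|^2$ performed in a different order, and it correctly produces the $C\epsilon^{-2}$ factor.

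The genuine problem is your treatment of $\mcal{U}$ versus $\mcal{X}$, which you dismiss as bookkeeping. As you yourself note, your $T_2$ piece closes with $\|f_g\|_{L^{2}(\mcal{X},g^{p+r})}$ (or at best the norm over an $\epsilon$-enlargement of $\mcal{U}$), and neither of your proposed fixes rescues the statement as it is actually needed. ``Absorbing a harmless enlargement of the domain into the constant $C$'' is not a legitimate move: $\|f_g\|_{L^{2}(\mcal{X},g^{p+r})}$ is not bounded by a constant times $\|f_g\|_{L^{2}(\mcal{U},g^{p+r})}$ (take $f_g$ vanishing on $\mcal{U}$ but nonzero just outside it; the right-hand side of the lemma is then zero while your $T_2$ term is not). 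Requiring $B_x(\epsilon)\subseteq\mcal{U}$ for all $x\in\mcal{U}$ is also not an option, because the lemma is applied in Lemma \ref{lemma62boundary} with $\mcal{U}=\partial_{t\epsilon}\mcal{X}$, a strip of width comparable to $\epsilon$; there the whole point is to iterate $\|L_{w,\epsilon}^{t}f\|_{L^{2}(\partial_{t\epsilon}\mcal{X},g^{p+r})}\lesssim\epsilon^{-2}\|L_{w,\epsilon}^{t-1}f\|_{L^{2}(\partial_{t\epsilon}\mcal{X},g^{p+r})}$ and finish with the thin-strip bound $\|f_g\|_{L^{2}(\partial_{t\epsilon}\mcal{X},g^{p+r})}^{2}\lesssim\epsilon^{2s}M^{2}$; replacing the right-hand side by the norm over $\mcal{X}$ destroys the $\epsilon^{2s}$ gain and the downstream proof. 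The resolution the paper uses (implicitly) is to read the integral defining $L_{w,\epsilon}f$ in this lemma as taken over $\mcal{U}$ itself, i.e.\ the operator is localized to $\mcal{U}$, so only $f_g\mathbf{1}_{\mcal{U}}$ ever appears; with that reading your $T_2$ piece closes with the $\mcal{U}$-norm exactly like $T_1$ and your argument goes through verbatim. Without it, what you have proved is the strictly weaker inequality with $\|f_g\|_{L^{2}(\mcal{X},g^{p+r})}$ on the right.
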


\begin{proof}
By Cauchy-Schwarz inequality, we have
\begin{align*}
    |L_{w,\epsilon}f(x)|^2&=\frac{1}{\epsilon^{2(d+2)}}\left(\int_{\mcal{U}}g(x)^{1-p}\frac{\eta\left(\frac{\|x-z\|}{\epsilon}\right)}{g(x)^{1-q/2}g(z)^{1-q/2}}(g(x)^{-r}f(x)-g(z)^{-r}f(z))g(z)dz\right)^{2}\\&\lesssim \frac{1}{\epsilon^{2(d+2)}}g(x)^{2(1-p)}\int_{\mcal{U}}\frac{\eta\left(\frac{\|x-z\|}{\epsilon}\right)}{g(x)^{1-q/2}g(z)^{1-q/2}}(f_{g}(x)-f_{g}(z))^2dz\cdot\int_{\mcal{X}}\frac{\eta\left(\frac{\|x-z\|}{\epsilon}\right)}{g(x)^{1-q/2}g(z)^{1-q/2}}dz\\&\lesssim \frac{2\sigma_0}{\epsilon^{4+d}}g(x)^{2(q-p)}\int_{\mcal{U}}\eta\left(\frac{\|x-z\|}{\epsilon}\right)(|f_{g}(x)|^2+|f_{g}(z)|^2)dz.
\end{align*}
Then, we have 
\begin{align*}
    \|L_{w,\epsilon}f\|_{L^{2}(\mcal{U},g^{p+r})}^{2}&=\int_{\mcal{U}}g^{p-r}(x)|L_{w,\epsilon}f(x)|^{2}dx\\&\lesssim  \frac{2}{\epsilon^{4+d}}\int_{\mcal{U}}\int_{\mcal{U}}g(x)^{2(q-p)+p-r}(x)\eta\left(\frac{\|x-z\|}{\epsilon}\right)(|f_{g}(x)|^2+|f_{g}(z)|^2)dzdx\\&\lesssim \frac{2}{\epsilon^{4+d}}\int_{\mcal{U}}\int_{\mcal{U}}\eta\left(\frac{\|x-z\|}{\epsilon}\right)(|f_{g}(x)|^2+|f_{g}(z)|^2)dzdx\\&\lesssim \frac{4}{\epsilon^{4+d}}\int_{\mcal{U}}\int_{\mcal{U}}\eta\left(\frac{\|x-z\|}{\epsilon}\right)|f_{g}(x)|^2dzdx\\&\le \frac{4}{\epsilon^{4}}\int_{\mcal{U}}|g(x)^{p+r}(x)f_{g}(x)|^2dx\\&\lesssim \frac{4}{\epsilon^{4}}\|f_{g}\|_{L^{2}(U,g^{p+r})}^{2}
\end{align*}
\end{proof}

\begin{Lemma}\label{lemma64}
    Suppose $f_{g}\in L^{2}(\mcal{U},g^{p+r};M)$ for a Borel set $\mcal{U}\subseteq \mcal{X}$. Then, there exists a constant $C>0$ such that
    \begin{align*}
        E_{w,\epsilon}(f;\mcal{U})\le \frac{C}{\epsilon^{2}}\|f_{g}\|_{L^{2}(\mcal{U},g^{p+r})}^{2},
    \end{align*}
    where we define the Dirichlet energy for the set $\mcal{U}$ as
\begin{align*}
    E_{w,\epsilon}(f,\mcal{U})&:=\frac{1}{\epsilon^{d+2}}\int_{\mcal{U}}\int_{\mcal{U}}(g(x)^{-r}f(x)-g(z)^{-r}f(z))^2\frac{\eta\left(\frac{\|x-z\|}{\epsilon}\right)}{g(x)^{1-q/2}g(z)^{1-q/2}}g(x)g(z)dxdz.
\end{align*}
\end{Lemma}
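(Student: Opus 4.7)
The plan is to imitate the strategy already used in Lemma~\ref{lemma6}, exploiting the elementary inequality $(a-b)^2 \le 2a^2 + 2b^2$ together with the uniform bounds on $g$ from assumption \ref{a2} to reduce the double integral to a product of a ``localization'' integral in one variable and an $L^2$ integral in the other. The key observation is that because $\eta$ is supported on $[0,1]$, the change of variables $y = (x-z)/\epsilon$ yields $\int_{\mathcal{X}} \eta(\|x-z\|/\epsilon)\,dz \le \epsilon^d \int_{\mathbb{R}^d} \eta(\|y\|)\,dy = \epsilon^d \sigma_0$, which under the normalization $\sigma_0 = 1$ from \ref{a3} is exactly $\epsilon^d$. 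This will cancel against $\epsilon^{-(d+2)}$ in front of the Dirichlet energy, leaving the desired $\epsilon^{-2}$ factor.

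Concretely, I would first write $g(x)^{-r}f(x) - g(z)^{-r}f(z) = f_g(x) - f_g(z)$ and bound $(f_g(x)-f_g(z))^2 \le 2 f_g(x)^2 + 2 f_g(z)^2$. The coefficient $\frac{g(x) g(z)}{g(x)^{1-q/2} g(z)^{1-q/2}} = g(x)^{q/2} g(z)^{q/2}$ is bounded above by a constant depending only on $g_{\min}, g_{\max}$ via \ref{a2}. This gives
\begin{align*}
    E_{w,\epsilon}(f,\mathcal{U}) \le \frac{C}{\epsilon^{d+2}} \int_{\mathcal{U}} \int_{\mathcal{U}} \bigl(f_g(x)^2 + f_g(z)^2\bigr)\, \eta\!\left(\frac{\|x-z\|}{\epsilon}\right) dx\,dz,
\end{align*}
and by symmetry of the two summands under exchange of $x$ and $z$, it suffices to control one of them, say the term with $f_g(x)^2$.

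Next, I would perform the $z$-integration first, noting that $\int_{\mathcal{U}} \eta(\|x-z\|/\epsilon)\,dz \le \int_{\mathbb{R}^d} \eta(\|x-z\|/\epsilon)\,dz = \epsilon^d$. This produces
\begin{align*}
    E_{w,\epsilon}(f,\mathcal{U}) \lesssim \frac{\epsilon^d}{\epsilon^{d+2}} \int_{\mathcal{U}} f_g(x)^2 \, dx = \frac{1}{\epsilon^2} \int_{\mathcal{U}} f_g(x)^2 \, dx.
\end{align*}
Finally, since $g$ is bounded from below and above by \ref{a2}, the unweighted $L^2$ norm of $f_g$ over $\mathcal{U}$ is comparable to $\|f_g\|_{L^2(\mathcal{U}, g^{p+r})}$ up to a multiplicative constant depending only on $g_{\min}$, $g_{\max}$, $p$ and $r$, which absorbs into $C$. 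This delivers the claimed bound.

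I do not anticipate a serious obstacle here; the lemma is essentially a direct consequence of the support of $\eta$ combined with the boundedness of $g$, and the argument is strictly simpler than the one for Lemma~\ref{lemma6} because no Cauchy--Schwarz step is required to handle a single (rather than nested) integral. The only bookkeeping to watch is the exponent of $g$ arising from combining $g(x) g(z)$ with the normalizing denominator $g(x)^{1-q/2} g(z)^{1-q/2}$, and checking that the resulting power is uniformly bounded under \ref{a2}.
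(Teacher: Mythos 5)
Your proposal is correct and follows essentially the same route as the paper's proof: the elementary bound $(a-b)^2\le 2a^2+2b^2$, symmetry in $x$ and $z$, boundedness of the density to control the weight $g(x)^{q/2}g(z)^{q/2}$, and integrating out $z$ using the compact support of $\eta$ to gain a factor $\epsilon^{d}$. The only cosmetic difference is that you pass through the unweighted $L^2$ norm and invoke norm equivalence at the end, whereas the paper keeps the weight $g^{p+r}$ throughout; both are valid under \ref{a2}.
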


\begin{proof}
Note that
\begin{align*}
    E_{w,\epsilon}(f;\mcal{U})&=\frac{1}{\epsilon^{d+2}}\int_{\mcal{U}}\int_{\mcal{U}}(g(x)^{-r}f(x)-g(z)^{-r}f(z))^2\frac{\eta\left(\frac{\|x-z\|}{\epsilon}\right)}{g(x)^{1-q/2}g(z)^{1-q/2}}g(x)g(z)dxdz\\&\le \frac{2}{\epsilon^{d+2}}\int_{\mcal{U}}\int_{\mcal{U}}(|g(x)^{-r}f(x)|^2+|g(z)^{-r}f(z)|^2)\frac{\eta\left(\frac{\|x-z\|}{\epsilon}\right)}{g(x)^{1-q/2}g(z)^{1-q/2}}g(x)g(z)dxdz\\&= \frac{4}{\epsilon^{d+2}}\int_{\mcal{U}}\int_{\mcal{U}}|g(x)^{-r}f(x)|^2\eta\left(\frac{\|x-z\|}{\epsilon}\right)g(x)^{q/2}g(z)^{q/2}dxdz\\&\lesssim \frac{4}{\epsilon^{d+2}}\int_{\mcal{U}}\int_{\mcal{U}}|g(x)^{-r}f(x)|^2\eta\left(\frac{\|x-z\|}{\epsilon}\right)g(x)^{p+r}dxdz\\&\lesssim \frac{4}{\epsilon^{2}}\int_{\mcal{U}}|g(x)^{-r}f(x)|^2g(x)^{p+r}dx.
\end{align*}
\end{proof}

We denote by $\mcal{X}_{t\epsilon}$ a subset of $\mcal{X}$ such that for any $x\in \mcal{X}_{t\epsilon}$, $B_{x}(t\epsilon)\in \mcal{X}$ consisting of points sufficiently far away from the boundary and $\partial_{t\epsilon} \mcal{X}$ by its complement within $\mcal{X}$ consisting of points close enough to the boundary.

\begin{Lemma}\label{lemma62interior}
    For $f\in H^{1}(\mcal{X},g;M)\cap H_{0}^{s}(\mcal{X},g;M)$ with $s\in\mbb{N}_{+}$ and $g\in C^{s-1}(\mcal{X})$, there exist constants $C_1,C_2>0$ such that
\begin{itemize}
    \item[(1)] If $s$ is odd, then we have with $t=(s-1)/2$:
    \begin{align*}
        \|L_{w,\epsilon}^{t}f-\sigma_1^{t}\mcal{L}_{w}^{t}f\|_{L^{2}(\mcal{X}_{t\epsilon},g^{p+r})}\le C_1 M\epsilon.
    \end{align*}
    \item[(2)] If $s$ is even, then we have with $t=(s-2)/2$:
    \begin{align*}
        \|L_{w,\epsilon}^{t}f-\sigma_1^{t}\mcal{L}_{w}^{t}f\|_{L^{2}(\mcal{X}_{t\epsilon},g^{p+r})}\le C_2 M\epsilon^{2}.
    \end{align*}
\end{itemize}
\end{Lemma}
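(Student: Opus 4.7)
The plan is to proceed by induction on $t$ (equivalently on $s$), with the main ingredient being a pointwise Taylor-expansion consistency estimate for $L_{w,\epsilon}$ against $\sigma_1\mcal{L}_w$ on smooth functions. The base case $t=0$ is immediate since both sides vanish. For the one-step Taylor estimate I would write $h_g := h/g^r$ and recast
\begin{align*}
L_{w,\epsilon} h(x) = \frac{g(x)^{q/2-p}}{\epsilon^{d+2}} \int_{\mcal{X}} \eta\!\Bigl(\tfrac{\|x-z\|}{\epsilon}\Bigr) g(z)^{q/2} \bigl(h_g(x) - h_g(z)\bigr)\,dz,
\end{align*}
Taylor-expand $g(z)^{q/2}$ and $h_g(z)$ about $z=x$, substitute $z = x+\epsilon w$, and invoke the rotational symmetry of $\eta$ (odd monomial moments vanish, and $\int \eta(\|w\|)\,w_iw_j\,dw = \sigma_1\delta_{ij}$). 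The resulting second-order term matches $\sigma_1\mcal{L}_w h(x) = -\tfrac{\sigma_1}{2g(x)^p}\,\text{div}(g^q\nabla h_g)(x)$ exactly, producing a residual $R_\epsilon(h)(x) := L_{w,\epsilon} h(x) - \sigma_1\mcal{L}_w h(x)$ which satisfies $|R_\epsilon(h)(x)|\le C\epsilon^2\Phi_4(h_g,g)(x)$ in terms of fourth derivatives of $h_g$ and $g$ when those are available, and the weaker $|R_\epsilon(h)(x)| \le C\epsilon\,\Phi_3(h_g,g)(x)$ when only three derivatives are.

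For $t\ge 1$, the inductive step is driven by the telescoping identity
\begin{align*}
L_{w,\epsilon}^t f - \sigma_1^t \mcal{L}_w^t f = \sigma_1\mcal{L}_w\bigl(L_{w,\epsilon}^{t-1}f - \sigma_1^{t-1}\mcal{L}_w^{t-1}f\bigr) + R_\epsilon\bigl(L_{w,\epsilon}^{t-1}f\bigr),
\end{align*}
in which the first summand is controlled by combining the induction hypothesis (strengthened to track Sobolev, not only $L^2$, regularity of the residual) with $H^2\to L^2$-boundedness of $\mcal{L}_w$, and the second summand is controlled by integrating the pointwise estimate applied to $h = L_{w,\epsilon}^{t-1}f$ against $g^{p+r}$ over $\mcal{X}_{t\epsilon}$. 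The $\epsilon$-versus-$\epsilon^2$ dichotomy is then exactly the parity of $s$: under $f\in H_0^s$ with $s = 2t+2$ (even) the topmost iteration inherits four useful derivatives, so the third-order Taylor moment integrates to zero and yields $\epsilon^2$, while $s = 2t+1$ (odd) furnishes only three and yields $\epsilon$. The hypothesis $g\in C^{s-1}$ delivers the $g$-derivatives needed for $\Phi_j(h_g,g)$, and the zero-trace condition legitimizes reducing to smooth $f$ via $C_c^\infty$-density in $H_0^s$, after which the integrated $L^2$ norms of $\Phi_4$ (or $\Phi_3$) are bounded by the weighted Sobolev norm of $f$, hence by $M$.

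The principal obstacle is to prevent the $\epsilon^{-2}$ blow-up in Lemma~\ref{lemma6} from propagating through the $t$ iterations of $L_{w,\epsilon}$: a brute application of that lemma to bound $L_{w,\epsilon}^{t-1}$ acting on the telescoping residual would cost a factor $\epsilon^{-2(t-1)}$ and destroy the target rate. The remedy is to strengthen the inductive hypothesis to track the higher-order Sobolev regularity of the residual $L_{w,\epsilon}^{t-1}f - \sigma_1^{t-1}\mcal{L}_w^{t-1}f$ inherited from $f\in H^s$, so that $\mcal{L}_w$ applied to the residual is $L^2$-bounded independently of $\epsilon$ rather than incurring a Lemma~\ref{lemma6}-type penalty. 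Additional care is required with the nested interior domains $\mcal{X}_{k\epsilon}$ for $k=0,\ldots,t$, since each application of $L_{w,\epsilon}$ loses an $\epsilon$-collar because of the support of the kernel $\eta$, and boundary contamination at any single step would disrupt the matching between $L_{w,\epsilon}$ and $\mcal{L}_w$.
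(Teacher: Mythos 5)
Your one-step Taylor estimate, the parity accounting ($s=2t+2$ gives four usable derivatives and hence $\epsilon^2$, $s=2t+1$ gives three and hence $\epsilon$), the reduction to smooth $f,g$ by density, and the bookkeeping of the shrinking collars $\mcal{X}_{k\epsilon}$ all match the paper's strategy. The genuine gap is in your inductive mechanism. In the telescoping identity you must control $\sigma_1\mcal{L}_w\bigl(L_{w,\epsilon}^{t-1}f-\sigma_1^{t-1}\mcal{L}_w^{t-1}f\bigr)$ and $R_\epsilon\bigl(L_{w,\epsilon}^{t-1}f\bigr)$, i.e.\ you apply a second-order differential operator, and a pointwise Taylor bound requiring three or four derivatives, to $\epsilon$-dependent \emph{non-local} objects. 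The lowest-order piece of the previous residual is built from $s$-th order Taylor remainders of $f_g$ (in the paper's notation, the coefficient $r_s$, which lies only in $L^2$, multiplied by $\epsilon^{s-2(t-1)}$); since $f$ has no more than $s$ weighted derivatives, this piece admits no $H^2$ bound that is uniform in $\epsilon$, so your proposed remedy --- strengthening the hypothesis so that ``$\mcal{L}_w$ applied to the residual is $L^2$-bounded independently of $\epsilon$'' --- is not attainable. The same problem hits $R_\epsilon\bigl(L_{w,\epsilon}^{t-1}f\bigr)$: the third/fourth derivatives of $L_{w,\epsilon}^{t-1}f$ are only available at a cost of $\epsilon^{-1}$ per derivative (each derivative landing on the kernel $\eta(\|x-z\|/\epsilon)$), and your outline does not track these weighted derivative losses, so the claimed $\epsilon^2$ (resp.\ $\epsilon$) rate does not follow as written.

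The paper's proof avoids this by proving a strictly stronger statement and inducting on it: a full expansion $L_{w,\epsilon}^{t}f=\sigma_1^{t}\mcal{L}_{w}^tf+\sum_{j}r_{2(j+t)}\epsilon^{2j}+r_{s}\epsilon^{s-2t}$ in which every coefficient $r_j$ is $\epsilon$-\emph{independent} and carries quantified regularity $\|r_j\|_{H^{s-j}(\mcal{X}_{t\epsilon},g)}\lesssim \|g\|_{C^{s-1}}^{t}M$. In the induction step it is the discrete operator $L_{w,\epsilon}$ (never $\mcal{L}_w$) that is applied to each coefficient: coefficients with at least two derivatives re-enter the one-step expansion, while the rough bottom coefficient $r_s\in L^2$ is handled by the $L^2\to L^2$ bound $\|L_{w,\epsilon}r_s\|\lesssim \epsilon^{-2}\|r_s\|$ of Lemma~\ref{lemma6}, whose $\epsilon^{-2}$ cost exactly matches the drop from $\epsilon^{s-2t}$ to $\epsilon^{s-2(t+1)}$ in the target order. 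To rescue your route you would have to build the analogous structure into your ``strengthened hypothesis'' --- either keep the expansion with $\epsilon$-independent coefficients of graded Sobolev regularity, or prove $\epsilon$-weighted derivative bounds for the non-local residual --- at which point you have essentially reconstructed the paper's argument.
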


\begin{proof}
Without loss of generality, we assume both $g$ and $f$ are $C^{\infty}(\mcal{X})$ due to the fact that $C^{\infty}(\mcal{X})$ is dense in both $H^{s}(\mcal{X},g)$ and $C^{s-1}(\mcal{X})$ and the norm in the statements is continuous with respect to $\|\cdot\|_{H^{s}(\mcal{X},g)}$ and $\|\cdot\|_{C^{s-1}(\mcal{X})}$. 

Actually, we claim the following stronger result: for $t<s/2$ and every $x\in \mcal{X}_{t\epsilon}$,
\begin{align}\label{inter}
    L_{w,\epsilon}^{t}f(x)=\sigma_1^{t}\mcal{L}_{w}f(x)+
    \sum_{j=1}^{\lfloor (s-1)/2\rfloor-t}r_{2(j+t)}(x)\epsilon^{2j}+r_{s}(x)\epsilon^{s-2t},
\end{align}
for some functions $r_{j}$ such that
\begin{align}\label{rjbound}
    \|r_j\|_{H^{s-j}(\mcal{X}_{t\epsilon},g)}\le C\|g\|_{C^{s-1}(\mcal{X})}^{t}M.
\end{align}
Note that the dependence of the functions $r_{j}$ on $t$ is suppressed in the notation.

The key idea underlying the proof of (\ref{inter}) is to consider the following Taylor expansion. For an $s$-times differentiable function $F:\mcal{X}\rightarrow \mbb{R}$ and $x\in\mcal{X}$, define the following operator $d_{x}^{s}$:
\begin{align*}
    (d_{x}^{s}F)(z):=\sum_{|\alpha|=s}D^{\alpha}F(x)z^{\alpha}.
\end{align*}
Also, define $d^{s}F:=\sum_{|\alpha|=s}D^{\alpha}F$. Then, for $\phi\in C^{s}(\mcal{X})$ and some $h>0$, $z\in\mcal{X}_{h}$, $x\in B_{z}(h)$, the Taylor expansion at $z$ is given as:
\begin{align*}
    \phi(x)=\phi(z)+\sum_{j=1}^{s-1}\frac{1}{j!}(d_{x}^{j}\phi)(x-z)+R_{s}(x,z;\phi).
\end{align*}
Here, we note that $(d_{x}^{j}\phi)(z)$ is a polynomial of degree $j$ and we have for any $y\in\mbb{R}$:
\begin{align*}
    (d_{x}^{j}\phi)(yz)=y^j (d_{x}^{j}\phi)(z).
\end{align*}
The remainder term $R_{j}(x,z;\phi)$ is 
\begin{align*}
    R_{j}(x,z;\phi):=\frac{1}{(j-1)!}\int_{0}^{1}(1-\theta)^{j-1}(d_{z+\theta(x-z)}^{j}\phi)(x-z)d\theta,
\end{align*}
such that for any $x^{*}\in B_{\mbf{0}}(1)$,
\begin{align*}
    \underset{x\in\mcal{X}_{h}}{\sup}|R_{j}(x,x+hx^{*};\phi)|\le Ch^{j}\|\phi\|_{C^{j}(\mcal{X})},
\end{align*}
and
\begin{align*}
    \int_{\mcal{X}_{h}}|R_{j}(z+\theta hx,z;\phi)|^{2}dz\le h^{2j}\int_{\mcal{X}_{h}}\int_{0}^{1}|(d_{z+\theta hx}^{j}\phi)(z)|^{2}d\theta dz\le h^{2j}\|d^{j}\phi\|_{L^{2}(\mcal{X})}^{2}.
\end{align*}

Now, we apply the above Taylor expansion on the function $f_{g}(x):=f(x)/g(x)^{r}$ up to order $s$ and the function $g^{q/2}(x)$ up to order $S$ in $L_{w,\epsilon}f(x)$, where $S=1$ if $s=1$ and otherwise $S=s-1$ and obtain:
\begin{align*}
    L_{w,\epsilon}f(x)&=\frac{1}{\epsilon^{d+2}}\sum_{j_1=1}^{s-1}\sum_{j_2=0}^{S-1}\frac{1}{j_1!j_2!}\int_{\mcal{X}}g(x)^{q/2-p}\eta\left(\frac{\|x-z\|}{\epsilon}\right)(d_{x}^{j_1}f_{g})(x-z)(d_{x}^{j_2}g^{q/2})(z-x)dz\\&+\frac{1}{\epsilon^{d+2}}\sum_{j=1}^{s-1}\frac{1}{j!}\int_{\mcal{X}}g(x)^{q/2-p}\eta\left(\frac{\|x-z\|}{\epsilon}\right)(d_{x}^{j_1}f_{g})(x-z)R_{S}(x,z;g^{q/2})dz\\&+\frac{1}{\epsilon^{d+2}}\int_{\mcal{X}}g(x)^{p/2-p}\eta\left(\frac{\|x-z\|}{\epsilon}\right)R_{s}(x,z;f_g)g(z)^{q/2}dz.
\end{align*}
Now, with the transformation $y=(z-x)/\epsilon$, we have
\begin{align*}
    L_{w,\epsilon}f(x)&=-\frac{1}{\epsilon^{2}}\sum_{j_1=1}^{s-1}\sum_{j_2=0}^{S-1}\frac{\epsilon^{j_1+j_2}}{j_1!j_2!}\int_{B_{\mbf{0}}(1)}g(x)^{q/2-p}\eta\left(\|y\|\right)(d_{x}^{j_1}f_{g})(y)(d_{x}^{j_2}g^{q/2})(y)dy\\&-\frac{1}{\epsilon^{2}}\sum_{j=1}^{s-1}\frac{\epsilon^{j}}{j!}\int_{B_{\mbf{0}}(1)}g(x)^{q/2-p}\eta\left(\|y\|\right)(d_{x}^{j_1}f_{g})(y)R_{S}(x,\epsilon y+x;g^{q/2})dy\\&+\frac{1}{\epsilon^{2}}\int_{B_{\mbf{0}}(1)}g(x)^{p/2-p}\eta\left(\|y\|\right)R_{s}(x,\epsilon y+x;f_g)g(\epsilon y+x)^{q/2}dy\\&=:L_{1}(x)+L_{2}(x)+L_{3}(x).
\end{align*}
We will now prove \eqref{inter} by induction on $t,$ and throughout this proof, with a slight abuse of notation, the functions $r_{j}$ in \eqref{inter} may vary from line to line depending on $t$ at the induction step but they will always satisfy the condition \eqref{rjbound} as we are only interested in the bounds.

Firstly, we start with $L_{1}(x)$. If $s=1$, we can see $L_{1}(x)=0$. Therefore, in the following, we only focus on $s\ge 2$. Now, we define 
\begin{align*}
    l_{j_1,j_2}(x):=\int_{B_{\mbf{0}}(1)}g(x)^{q/2-p}\eta(\|y\|)(d_{x}^{j_1}f_{g})(y)(d_{x}^{j_2}g^{q/2})(y)dy,
\end{align*}
such that 
\begin{align*}
    L_{1}(x)=-\frac{1}{\epsilon^{2}}\sum_{j_1=1}^{s-1}\sum_{j_2=0}^{S-1}\frac{\epsilon^{j_1+j_2}}{j_1!j_2!}l_{j_1,j_2}(x).
\end{align*}
Since $(d_{x}^{j}f_{g})(y)$ is a polynomial of degree $j$, $l_{j_1,j_2}$ actually depends on the sum $j_1+j_2$ and $d_{x}^{j_1}d_{x}^{j_2}$ is an order $j_1+j_2$ multivariate monomial. Therefore, when $j_1+j_2$ is odd, we have
\begin{align*}
    l_{j_1,j_2}(x)=0.
\end{align*}
Then, when $s=2$, we have $j_1+j_2=1$ and $L_{1}(x)=0$. As for $s\ge 3$, we notice that the lowest order term of $L_{11}(x)$ is from $j_1+j_2=2$, which means either $j_1=1,j_2=1$ or $j_1=2,j_2=0$. We have
\begin{align*}
    l_{1,1}(x)&=\int_{B_{\mbf{0}}(1)}g(x)^{q/2-p}\eta(\|y\|)(d_{x}^{1}f_{g})(y)(d_{x}^{1}g^{q/2})(y)dy\\&=\sum_{i_1=1,i_2=1}^{d}g(x)^{q/2-p}(Df_{g}(x))^{(i_1)}(Dg^{q/2}(x))^{(i_{2})}\int_{B_{\mbf{0}}(1)}\|y\|^{2}\eta(\|y\|)dy,
\end{align*}
and
\begin{align*}
    \frac{1}{2}l_{2,0}(x)&=\frac{1}{2}\int_{B_{\mbf{0}}(1)}g(x)^{q/2-p}\eta(\|y\|)(d_{x}^{2}f_{g})(y)(d_{x}^{2}g^{q/2})(y)dy\\&\\&=\frac{1}{2}\sum_{i=1}^{d}g(x)^{q/2-p}((Df_{g}(x))^{(i)})^{2}g(x)^{q/2}\int_{B_{\mbf{0}}(1)}\|y\|^{2}\eta(\|y\|)dy.
\end{align*}
Therefore, we have by definition:
\begin{align*}
    \mcal{L}_{w}f(x)=-\frac{1}{2g(x)^{p}}\left(\nabla g(x)^{q}\cdot \nabla\left(\frac{f(x)}{g(x)^{r}}\right)+g(x)^{q}\Delta\left(\frac{f(x)}{g(x)^{r}}\right)\right),
\end{align*}
and
\begin{align*}
    -(l_{1,1}(x)+\frac{1}{2}l_{2,0}(x))=\sigma_1\mcal{L}_{w}f(x).
\end{align*}
This is exactly the leading term. \textcolor{black}{We remark here that in \cite[Section D.2]{green2021minimax}, the negative sign is missing, which does not actually give the Laplacian operator by the leading term.} Now, it remains to bound the higher order terms with $j_1+j_2>2$. We will show that
\begin{align*}
    L_{1}(x)=\sigma_1\mcal{L}_{w}+\sum_{j=1}^{\lfloor (s-1)/2\rfloor-1}r_{2(j+1)}(x)\epsilon^{2j}+r_{s}(x)\epsilon^{s-2}.
\end{align*}
It suffices to show for $j_1+j_2>2$, $l_{j_1,j_2}$ satisfies \eqref{rjbound} for $j=\min\{j_1+j_2-2,s-2\}$. Through the multi-index notation, we write that
\begin{align*}
    l_{j_1,j_2}(x)=g(x)^{q/2-p}\sum_{|\alpha_1|=j_1,|\alpha_2|=j_2}D^{\alpha_1}f_g(x)D^{\alpha_2}g^{q/2}(x)\int_{B_{\mbf{0}}(1)}y^{\alpha_1}y^{\alpha_2}\eta(\|y\|)dy,
\end{align*}
where $|\int_{B_{\mbf{0}}(1)}y^{\alpha_1}y^{\alpha_2}\eta(\|y\|)dy|<\infty$ for all $\alpha_1,\alpha_2$. Then, by H\"{o}lder's inequality, we have for $|\alpha_1|=j_1$, $|\alpha_{2}|=j_2$,
\begin{align*}
    \|g(x)^{q/2-p}D^{\alpha_1}f_g D^{\alpha_2}g^{q/2}\|_{H^{s-(j+2)}(\mcal{X},g)}&\lesssim \|D^{\alpha_1}f_g \|_{H^{s-(j+2)}(\mcal{X},g)}\|g(x)^{q/2-p}D^{\alpha_2}g^{q/2}\|_{C^{s-(j+2)}(\mcal{X})}\\&\lesssim \|D^{\alpha_1}f_g \|_{H^{s-j_1}(\mcal{X},g)}\|D^{\alpha_2}g^{q/2}\|_{C^{s-(j_2+1)}(\mcal{X})}\\&\le M\|g\|_{C^{s-1}}.
\end{align*}
Summing over all $|\alpha_1|=j_1$ and $|\alpha_2|=j_2$, we obtain that $l_{j_1,j_2}$ satisfies \eqref{rjbound}.

Next, as for $L_2(x)$, note that if $s=1$, $L_{2}(x)=0$. We want to show that for $s\ge 2$, 
\begin{align*}
    \|L_{2}\|_{L^{2}(\mcal{X}_{\epsilon},g^{p+r})}\le C\epsilon^{s-2}M\|g\|_{C^{s-1}(\mcal{X})}.
\end{align*}
Clearly, if $s=1$, $L_2(x)=0$. Now, for $s\ge 2$, we have $S=s-1$ and since $|R_{s-1}(x,x+\epsilon x^{*})|\le C\epsilon^{s-1}\|g\|_{C^{s-1}(\mcal{X})}$ for any $x^{*}\in B_{\mbf{0}}(1)$ and $d_{x}^{j}(\cdot)$ is a $j$-homogeneous function, we have
\begin{align*}
    |L_{2}(x)|&\le \sum_{j=1}^{s-1}\frac{\epsilon^{j-2}}{j!}\int_{B_{\mbf{0}}(1)}g(x)^{q/2-p}\eta\left(\|y\|\right)|(d_{x}^{j_1}f_{g})(y)|\cdot|R_{S}(x,\epsilon y+x;g^{q/2})|dy\\&\le C\epsilon^{s-2}\|g\|_{C^{s-1}(\mcal{X})}\sum_{j=1}^{s-1}\frac{1}{j!}\int_{B_{\mbf{0}}(1)}g(x)^{q/2-p}\eta\left(\|y\|\right)|(d_{x}^{j_1}f_{g})(y)|dy.
\end{align*}
Moreover, we have by Cauchy–Schwarz inequality,
\begin{align*}
    &\int_{\mcal{X}_{\epsilon}}g(x)^{p+r}\left(\int_{B_{\mbf{0}}(1)}g(x)^{q/2-p}\eta\left(\|y\|\right)|(d_{x}^{j_1}f_{g})(y)|dy\right)^{2}dx\\&\le \int_{\mcal{X}_{\epsilon}}g(x)^{q-p+r}\left(\int_{B_{\mbf{0}}(1)}\eta\left(\|y\|\right)|(d_{x}^{j_1}f_{g})(y)|^2dy\right)\left(\int_{B_{\mbf{0}}(1)}\eta(\|y\|)dy\right)dx\\&\le \sigma_0 \int_{B_{\mbf{0}}(1)}\int_{\mcal{X}_{\epsilon}}g(x)^{q-p+r}\eta(\|y\|)((d^{j}f_g)(x))^{2}dxdy\\&\lesssim \sigma_0^{2}\int_{\mcal{X}_{\epsilon}}g(x)^{p+r}((d^{j}f_g)(x))^{2}dx\\&=\sigma_{0}^{2}\|d^{j}f_g\|_{L^{2}(\mcal{X}_{\epsilon},g^{p+r})}^{2},
\end{align*}
where in the last step, we use the fact that $|d_{x}^{j}f(y)|\le |d^{j}f(x)|$ for all $y\in B_{\mbf{0}}(1)$. Therefore, it yields that
\begin{align*}
    &\int_{\mcal{X}_{\epsilon}}g(x)^{p+r}|L_{2}(x)|^{2}dx\\&\le C\left(\epsilon^{s-2}\|g\|_{C^{s-1}(\mcal{X})}\right)^{2}\sum_{j=1}^{s-1}\int_{\mcal{X}_{\epsilon}}g(x)^{p+r}\left(\frac{1}{j!}\int_{B_{\mbf{0}}(1)}g(x)^{q/2-p}\eta\left(\|y\|\right)|(d_{x}^{j_1}f_{g})(y)|dy\right)^{2}dx\\&\le C\left(\epsilon^{s-2}\|g\|_{C^{s-1}(\mcal{X})}\right)^{2}\sum_{j=1}^{s-1}\|d^{j}f_{g}\|_{L^{2}(\mcal{X}_{\epsilon},g^{p+r})}^{2}.
\end{align*}
We obtain the desired bound. 

Finally, similar to $L_{2}(x)$, we obtain the same bound for $L_3(x).$ Combining the obtained bounds for $L_1(x) - L_3(x),$ we obtain \eqref{inter} for $t=1$. 

Now, we perform the induction step. Assuming the bound \eqref{inter} holds up to some $t<s/2$, we want to show it also holds for $t+1$, with $t+1 < s/2$. For convenience, we introduce the following notation: for any $1\le j\le l\le s$, denote by $r_{j,l}(x)=r_{(s-l)+j}(x)$. Note again that the functions $r_{j,l}$ implicitly depend on $t$ at the induction step thus they may vary in the below arguments from line to line. For a function $r\in H^{l}(\mcal{X}_{t\epsilon},g;C\|g\|_{C^{s-1}(\mcal{X})}^{t}M)$ for some $l\le s$, if $l\le 2$, we have by the inductive hypothesis that for any $x\in\mcal{X}_{(t+1)\epsilon}$, 
\begin{align*}
    L_{w,\epsilon}r(x)=r_{l}^{l}(x)\epsilon^{l-2}.
\end{align*}
On the other hand, if $2<l\le s$, then by the inductive hypothesis, it holds that for any $x\in\mcal{X}_{(t+1)\epsilon}$,
\begin{align}\label{inductive1}
    L_{w,\epsilon}r(x)=\sigma_{1}\mcal{L}_{w}r(x)+\sum_{j=1}^{\lfloor (l-1)/2\rfloor-1}r_{2j+2,l}(x)\epsilon^{2j}+r_{l,l}(x)\epsilon^{l-2}.
\end{align}
Then, we have
\begin{align}\label{inductive2}
    L_{w,\epsilon}^{t+1}f(x)&=(L_{w,\epsilon}\circ L_{w,\epsilon}^{t}f)(x)\nonumber\\&=\sigma_1^{t}L_{w,\epsilon}\mcal{L}_{w}^{t}f(x)+\sum_{j=1}^{\lfloor (s-1)/2\rfloor-k}L_{w,\epsilon}r_{2(j+t)}(x)\epsilon^{2j}+L_{w,\epsilon}r_{s}(x)\epsilon^{s-2t}.
\end{align}
In the following, we will bound these terms on the right-hand side individually. First of all, since $\mcal{L}_{w}^{t}f\in H^{s-2t}(\mcal{X},g;C\|g\|_{C^{s-1}(\mcal{X})}^{t}M)$, applying \eqref{inductive1} yields 
\begin{align}\label{t+11}
    L_{w,\epsilon}\mcal{L}_{w}^{t}f(x)&=\sigma_1 \mcal{L}_{w}^{t+1}f(x)+\sum_{j=1}^{\lfloor (s-2t-1)/2\rfloor-1}r_{2j+2,s-2t}(x)\epsilon^{2j}+r_{s-2t,s-2t}(x)\epsilon^{s-2t-2}\nonumber\\&=\sigma_1 \mcal{L}_{w}^{t+1}f(x)+\sum_{j=1}^{\lfloor (s-1)/2\rfloor-(t+1)}r_{2(t+1+j)}(x)\epsilon^{2j}+r_{s}(x)\epsilon^{s-2(t+1)},
\end{align}
where we apply the fact mentioned before that $r_{j,l}(x)=r_{(s-l)+j}(x)$.

Next, suppose $j<\lfloor (s-1)/2\rfloor-t$. We apply \eqref{inductive1} and obtain
\begin{align*}
    L_{w,\epsilon}r_{2(j+t)}(x)&=\sigma_1 \mcal{L}_{w}r_{2(j+t)}(x)+\sum_{i=1}^{\lfloor (s-2j-2t-1)/2\rfloor-1}r_{2i+2,s-2(j+t)}(x)\epsilon^{2i}\\&\quad+r_{s-2(j+t),s-2(j+t)}(x)\epsilon^{s-2(j+t)-2}\\&=r_{2(j+t+1)}(x)+\sum_{i=1}^{\lfloor (s-1)/2\rfloor-(j+t+1)}r_{2(i+j+t+1)}(x)\epsilon^{2i}+r_{s}(x)\epsilon^{s-2(j+t+1)},
\end{align*}
where we use the fact that $r_{j,l}(x)=r_{(s-l)+j}(x)$ and $\sigma_1\mcal{L}_w r_{2(j+t)}(x)=r_{2,s-2(j+t)}(x)=r_{2(j+t+1)}(x)$. Therefore, we have
\begin{align}\label{t+12}
    L_{w,\epsilon}r_{2(j+t)}(x)\epsilon^{2j}=r_{2(j+t+1)}(x)\epsilon^{2j}+\sum_{m=1}^{\lfloor (s-1)/2\rfloor-(k+1)}r_{2(m+t+1)}(x)\epsilon^{2m}+r_{s}(x)\epsilon^{s-2(k+1)},
\end{align}
where the last equality is by changing the variable $m=i+j$. Moreover, when $j=\lfloor (s-1)/2\rfloor-t$, we have $2(j+t)=2\lfloor (s-1)/2\rfloor$ and we simply calculate that
\begin{align}\label{t+13}
    L_{w,\epsilon}r_{2(j+t)}(x)\epsilon^{2j}=r_{s-2(j+t)}^{s-2(j+t)}(x)\epsilon^{s-2(j+k)}\epsilon^{2j}=r_{s}(x)\epsilon^{s-2(k+1)}.
\end{align}
Finally, according to \eqref{inductive1}, we have
\begin{align}\label{t+14}
    L_{w,\epsilon}r_{s}(x)\epsilon^{s-2t}=r_{s}(x)\epsilon^{s-2(t+1)}.
\end{align}
Combining \eqref{t+11}-\eqref{t+14} with \eqref{inductive2}, we obtain the proof for $t+1$.

\end{proof}

Recall that we write $\mcal X = \mcal{X}_{t\epsilon} \sqcup \partial  \mcal{X}_{t\epsilon}$, where for any $x\in \mcal{X}_{t\epsilon}$, $B_{x}(t\epsilon)\subset \mcal{X}$ and $\partial_{t\epsilon} \mcal{X}$ as its complement within $\mcal{X}$ consisting of points `close' to the boundary.

\begin{Lemma}\label{lemma62boundary}
    For $f\in H^{s}_{0}(\mcal{X},g;M)$ and $t>0$ such that $2t<s$, there exists a constant $c>0$ not depending on $M$ or $f$ such that for all $\epsilon<c$,
\begin{align*}
    \|L_{w,\epsilon}^{t}f\|_{L^{2}(\partial_{t\epsilon} \mcal{X},g^{p+r})}^{2}\lesssim \epsilon^{2(s-2t)}M^{2}.
\end{align*}
\end{Lemma}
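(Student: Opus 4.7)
My plan is to iterate a one-step $L^{2}$ bound for $L_{w,\epsilon}$ (essentially Lemma~\ref{lemma6}) and close the recursion using an iterated Hardy-type inequality that captures how small $f_{g} := f/g^{r}$ is on a thin strip near $\partial \mathcal{X}$. The key observation is that each application of $L_{w,\epsilon}$ costs at most a factor $\epsilon^{-2}$ in $L^{2}$, while the zero-trace hypothesis $f \in H_{0}^{s}(\mathcal{X}, g; M)$ gains a factor $\epsilon^{s}$ on any $\epsilon$-strip along the boundary. After $t$ iterations the two effects combine to the target rate $\epsilon^{s-2t} M$, and squaring yields the claim.

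First, since $f_{g} \in H_{0}^{s}(\mathcal{X})$---which under \ref{a2} is equivalent to the weighted version $H_{0}^{s}(\mathcal{X}, g)$---the iterated Hardy inequality on a bounded Lipschitz domain gives
\[
\int_{\mathcal{X}} \frac{|f_{g}(x)|^{2}}{\operatorname{dist}(x, \partial \mathcal{X})^{2s}}\, dx \;\lesssim\; \|f_{g}\|_{H^{s}(\mathcal{X})}^{2} \;\lesssim\; M^{2}.
\]
Since $\operatorname{dist}(x, \partial \mathcal{X}) \le h$ on $\partial_{h} \mathcal{X}$, combined with the two-sided bound on $g$ from \ref{a2}, this implies
\[
\|f\|_{L^{2}(\partial_{h} \mathcal{X}, g^{p+r})}^{2} \;\lesssim\; h^{2s}\, M^{2}
\]
for any $h > 0$.

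Next, following the Cauchy--Schwarz step used in the proof of Lemma~\ref{lemma6}, for any function $h$ and any $k \ge 0$ one obtains
\[
\|L_{w,\epsilon} h\|_{L^{2}(\partial_{k\epsilon} \mathcal{X}, g^{p+r})}^{2} \;\lesssim\; \frac{1}{\epsilon^{4}}\,\|h\|_{L^{2}(\partial_{(k+1)\epsilon} \mathcal{X}, g^{p+r})}^{2};
\]
the expansion of the strip by one $\epsilon$ reflects that $\eta(\|\cdot\|/\epsilon)$ is supported in a ball of radius $\epsilon$, so $L_{w,\epsilon} h(x)$ only sees values of $h$ within $B_{x}(\epsilon)$, and Fubini converts the double integral into an integral over the enlarged strip. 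Applying this successively with $h = L_{w,\epsilon}^{j-1} f$ and $k = t+j-1$ for $j = 1, \ldots, t$, the iteration telescopes to
\[
\|L_{w,\epsilon}^{t} f\|_{L^{2}(\partial_{t\epsilon} \mathcal{X}, g^{p+r})}^{2} \;\lesssim\; \frac{1}{\epsilon^{4t}}\,\|f\|_{L^{2}(\partial_{2t\epsilon} \mathcal{X}, g^{p+r})}^{2}.
\]
Combining with the Hardy bound at $h = 2t\epsilon$, the right-hand side is at most $C\, \epsilon^{2s-4t} M^{2} = C\, \epsilon^{2(s-2t)} M^{2}$, as desired.

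The main obstacle I anticipate is the iterated Hardy step: the classical Hardy inequality handles only $s = 1$, and the higher-order version for $H_{0}^{s}$ on a bounded Lipschitz domain needs either a standard reference or a direct proof by successively applying the first-order Hardy inequality to normal derivatives of $f_{g}$ whose traces vanish on $\partial \mathcal{X}$. A secondary bookkeeping item is the strip-expansion in the one-step bound: Lemma~\ref{lemma6} is stated on a fixed set $\mathcal{U}$, but because $L_{w,\epsilon}$ is genuinely non-local it sees values in an $\epsilon$-enlargement of $\mathcal{U}$; this is controlled cleanly by the compact support of $\eta$ and Fubini, producing the ``$k \mapsto k+1$'' bookkeeping used above.
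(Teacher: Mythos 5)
Your proposal is correct and follows the same two-step skeleton as the paper: iterate the one-step bound of Lemma~\ref{lemma6} to pay a factor $\epsilon^{-4t}$, then show that the zero-trace hypothesis makes $\|f_g\|_{L^2(\partial_{h}\mcal{X},g^{p+r})}^2\lesssim h^{2s}M^2$ on a boundary strip of width $h\asymp\epsilon$. The difference lies in how that strip estimate is obtained. The paper proves it by hand: it flattens the boundary on local patches via Lipschitz graphs and rigid motions (following the argument of Lemma~5 in \cite{green2021minimax} and Theorem~18.1 of \cite{leoni2017first}), writes $f_g$ as an integral of its $s$-th normal derivative starting from the boundary where the $C_c^\infty$ approximants vanish, and patches the local bounds together by compactness. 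You instead invoke a higher-order Hardy inequality for $H_0^s$ on a bounded Lipschitz domain, $\int_{\mcal{X}}|f_g|^2\mathrm{dist}(x,\partial\mcal{X})^{-2s}dx\lesssim\|f_g\|_{H^s(\mcal{X})}^2$, which under \ref{a2} immediately yields the strip bound; this is legitimate for integer $s$ and $p=2$ (the iterated first-order weighted Hardy inequalities avoid the borderline exponent), but as you note it requires either a citation or the sketched induction on normal derivatives --- which, if written out, is essentially the computation the paper performs directly, so the two routes buy comparable effort, with yours being more modular and the paper's being self-contained. One point in your favor: your $k\mapsto k+1$ strip-expansion bookkeeping correctly accounts for the nonlocality of $L_{w,\epsilon}$ (its value at $x$ depends on $f_g$ on $B_x(\epsilon)$), whereas the paper's iteration keeps the set $\partial_{t\epsilon}\mcal{X}$ fixed at every step, which is a slight imprecision that your version repairs at no cost since the final strip width $2t\epsilon$ still contributes only a constant depending on $s$ and $t$.
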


\begin{proof}
    Note that according to Lemma \ref{lemma6}, we have
\begin{align*}
    \|L_{w,\epsilon}^{t}f\|_{L^{2}(\partial_{t\epsilon} \mcal{X},g^{p+r})}^{2}\lesssim \frac{1}{\epsilon^{4}} \|L_{w,\epsilon}^{t-1}f\|_{L^{2}(\partial_{t\epsilon} \mcal{X},g^{p+r})}^{2}\lesssim\ldots\lesssim \frac{1}{\epsilon^{4t}}\|f_{g}\|_{L^{2}(\partial_{t\epsilon} \mcal{X},g^{p+r})}^{2}.
\end{align*}
Therefore, it suffices to show for all $\epsilon<c$,
\begin{align}\label{boundarybound}
    \|f_{g}\|_{L^{2}(\partial_{t\epsilon} \mcal{X},g^{p+r})}^{2}\lesssim\epsilon^{2s}\|f\|_{H^{s}(\mcal{X},g)}^{2}.
\end{align}
In order to deal with $f_{g}$ near the boundary, we will take a similar procedure used in \cite[Proof of Lemma 5]{green2021minimax} and \cite[Theorem 18.1]{leoni2017first} as follows. With loss of generality, we take $t=1$ as one can view $\epsilon<c/t$ for proving for the general case.

\emph{Step I: Local patch.} We assume that for some $c_0>0$ and a Lipschitz mapping $\phi:\mbb{R}^{d-1}\rightarrow [-c_0,c_0]$ and since $f\in H^{s}_{0}(\mcal{X},g;M)$, without loss of generality, we can assume that $f_{g}\in C_{c}^{\infty}(U_{\psi}(c_0))$ with 
\begin{align*}
    U_{\psi}(c_0):=\{y\in Q(0,c_0):\psi(y^{(-d)})\le y^{(d)}\},
\end{align*}
where $Q(0,c_0)$ is the $d$-dimensional hypercube of side length $c_0$ centered at $\mbf{0}$. Now, following step 1 in \cite[Proof of Lemma 5]{green2021minimax} by replacing $f$ as $f_{g}$, we have
\begin{align*}
    |f_{g}(y)|^2&\lesssim \epsilon^{2(s-1)}\left(\int_{\psi(y^{(-d)})}^{y^{(d)}}|(D^sf_{g}(y^{(-d)},z))^{(d)}|dz\right)^{2}\\&\lesssim \epsilon^{2s-1}\int_{\psi(y^{(-d)})}^{y^{(d)}}|(D^sf_{g}(y^{(-d)},z))^{(d)}|^{2}dz.
\end{align*}
Then, we obtain:
\begin{align}\label{step1:1}
    \int_{V_{\psi}(\epsilon)}g(y)^{p+r}|f_{g}(y)|^2dy&\lesssim \int_{Q_{d-1}(c_0)}\int_{\psi(y^{(-d)})}^{\psi(y^{(-d)})+\epsilon}|f_{g}(y^{(-d)},y^{(d)})|^{2}dy^{(d)}dy^{(-d)}\nonumber\\&\lesssim \epsilon^{2s-1}\int_{Q_{d-1}(c_0)}\int_{\psi(y^{(-d)})}^{\psi(y^{(-d)})+\epsilon}\int_{\psi(y^{(-d)})}^{y^{(d)}}|(D^sf_{g}(y^{(-d)},z))^{(d)}|^{2}dzdy^{(d)}dy^{(-d)},
\end{align}
where $Q_{d-1}(0,c_0)$ is the $d$-1 dimensional hypercube of side length $c_0$ centered at $\mbf{0}$. Also, by changing the integration order, it yields that
\begin{align}\label{step1:2}
    \int_{\psi(y^{(-d)})}^{\psi(y^{(-d)})+\epsilon}\int_{\psi(y^{(-d)})}^{y^{(d)}}|(D^sf_{g}(y^{(-d)},z))^{(d)}|^{2}dzdy^{(d)}&\lesssim \epsilon\int_{\psi(y^{(-d)})}^{\psi(y^{(-d)})+\epsilon}|(D^sf_{g}(y^{(-d)},z))^{(d)}|^{2}dz\nonumber\\&\lesssim \epsilon\int_{\psi(y^{(-d)})}^{c_0}|(D^sf_{g}(y^{(-d)},z))^{(d)}|^{2}dz.
\end{align}
Combining \eqref{step1:1} and \eqref{step1:2}, we obtain:
\begin{align*}
    \int_{V_{\psi}(\epsilon)}g(y)^{p+r}|f_{g}(y)|^2dy&\lesssim \epsilon^{2s}\int_{Q_{d-1}(c_0)}\int_{\psi(y^{(-d)})}^{c_0}g(y^{(-d)},z)^{q}|(D^sf_{g}(y^{(-d)},z))^{(d)}|^{2}dzdy^{(-d)}\\&\lesssim \epsilon^{2s}\|f\|_{H^{s}(U_{\psi}(c_0),g)}^{2}.
\end{align*}

\emph{Step 2: Rigid motion of local patch} Now suppose at a point $x_0\in \partial \mcal{X}$, there exits a rigid motion $T:\mbb{R}^{d}\rightarrow \mbb{R}^{d}$ such that $T(x_0)=0$, and a number $C_0$ such that we have all $C_0\epsilon\le c_0 $,
\begin{align}\label{localrigid}
    T(Q_{T}(x_0,c_0)\cap \partial_{\epsilon} \mcal{X})\subseteq V_{\psi}(C_0\epsilon)\quad \text{and}\quad T(Q_{T}(x_0,c_0)\cap \mcal{X})=U_{\psi}(c_0),
\end{align}
where $Q_{T}(x_0,c_0)$ is a hypercube in $\mbb{R}^{d}$ of side length $c_0$ centered at $x_0$ (not necessarily coordinate-axis-aligned). Let $v_{g}(y):=f_{g}(T^{-1}(y))$ and $v(y):=f(T^{-1}(y))$ for all $y\in U_{\psi}(c_0)$. Then, if $f_{g}\in C_{c}^{\infty}(\mcal{X})$, we have $v_{g}\in C_{c}^{\infty}(U_{\psi}(c_0))$ such that $\|v_{g}\|_{H^{s}(U_{\psi}(c_0))}^{2}=\|f_g\|_{H^{s}(Q_{T}(x_0,c_0))\cap \mcal{X}}^{2}$. Therefore, according to Step 1, we have
\begin{align*}
    \int_{V_{\psi}(C_0\epsilon)}g(x)^{p+r}|v_g(y)|^2dy\lesssim \epsilon^{2s}\|v\|_{H^{s}(U_{\psi}(c_0)),g}^{2}.
\end{align*}
Then, it yields that
\begin{align*}
    &\int_{Q_{T}(x_0,c_0)\cap \partial_{\epsilon} \mcal{X}}g^{p+r}(x)|f_g(x)|^{2}dx\\&=\int_{T(Q_{T}(x_0,c_0)\cap \partial_{\epsilon} \mcal{X})}g^{p+r}(y)|v_g(y)|^{2}dx\\&\lesssim \int_{V_{\psi}(C_0\epsilon)}g(x)^{p+r}|v_g(y)|^2dy\\&\lesssim \epsilon^{2s}\|v\|_{H^{s}(U_{\psi}(c_0),g)}^{2}\\&\lesssim \epsilon^{2s}\|f\|_{H^{s}(Q_{T}(x_0,c_0)\cap \partial_{\epsilon} \mcal{X},g)}^{2}\lesssim \epsilon^{2s}\|f\|_{H^{s}(\mcal{X},g)}^{2}.
\end{align*}

\emph{Step 3: Lipschitz domain.} Now we arrive at the last step where we shall deal with the case: $\mcal{X}$ is assumed to be an open, bounded subset of $\mbb{R}^{d}$ with Lipschitz boundary. Again, following the procedure in \cite[Proof of Lemma 5]{green2021minimax}. In this case, for every $x_0\in \partial\mcal{X}$, there exists a rigid motion $T_{x_0}:\mbb{R}^{d}\rightarrow\mbb{R}^{d}$ such that $T_{x_0}(x_0)=0$, a number $c_0(x_0)$, a Lipshitz mapping $\psi_{x_0}:\mbb{R}^{d-1}\rightarrow [-c_0(x_0),c_0(x_0)]$ and a number $C_{0}(x_0)$ satisfying for all $C_0(x_0)\epsilon\le c_0(x_0)$, \eqref{localrigid} holds for replacing $c_0,C_0,T,\psi$ by $c_0(x_0),C_0(x_0),T_{x_0},\psi_{x_0}$ respectively. Therefore, by Step 2, we have
\begin{align*}
    \int_{Q_{T_{x_0}}(x_0,c_0(x_0))\cap \partial_{\epsilon} \mcal{X}}g^{p+r}(x)|f_g(x)|^{2}dx\lesssim_{x_0} \epsilon^{2s}\|f\|_{H^{s}(\mcal{X},g)}^{2}.
\end{align*}
Although the constant in the last bound depends on $x_0$, by compactness assumption, there exists a finite subset (denoted by $x_{0,1},\ldots,x_{0,N}$) of the collection of hypercubes $\{Q_{T_{x_0}}(x_0,c_0(x_0)/2):x_{0}\in\partial\mcal{X}\}$ which covers $\partial\mcal{X}$. Then, by taking the minimum of all constants with respect to $x_{0,1},\ldots,x_{0,N}$, we can conclude that
\begin{align*}
    \partial_{\epsilon}(\mcal{X})\subseteq \bigcup_{i=1}^{N}Q_{T_{x_{0,i}}}(x_{0,i},c_{0}(x_{0,i})).
\end{align*}
Consequently, we have
\begin{align*}
    \int_{\partial \mcal{X}}g^{p+r}(x)|f_{g}(x)|^{2}dx\lesssim \sum_{i=1}^{N}\int_{Q_{T_{x_{0,i}}}(x_{0,i},c_0(x_{0,i}))\cap \partial_{\epsilon} \mcal{X}}g^{p+r}(x)|f_g(x)|^{2}dx\lesssim \epsilon^{2s}\|f\|_{H^{s}(\mcal{X},g)}^{2}.
\end{align*}
Therefore, we proved the desired result \eqref{boundarybound}.
\end{proof}

The following result presents a higher order version of Lemma \ref{lemma61} for $s>1$ and the non-local weighted Sobolev seminorm, $\langle L_{w,\epsilon}^{s}f,f \rangle_{g^{p-r}}$.
\begin{Lemma}\label{lemma62}
    For $f\in H^{1}(\mcal{X},g;M)\cap H_{0}^{s}(\mcal{X},g;M)$ with $s\in\mbb{N}_{+}\backslash\{1\}$, we have
    \begin{align*}
        \langle L_{w,\epsilon}^{s}f,f\rangle_{g^{p-r}}\lesssim M^{2}.
    \end{align*}
\end{Lemma}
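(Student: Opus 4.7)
The plan is to reduce the $s$-th order quantity $\langle L_{w,\epsilon}^s f, f\rangle_{g^{p-r}}$ to first-order quantities controlled by Lemma \ref{lemma61}, plus remainder terms bounded using Lemmas \ref{lemma6}, \ref{lemma62interior} and \ref{lemma62boundary}. The argument splits on the parity of $s$. A preliminary observation used throughout is that if $f \in H_0^s(\mcal{X},g;M)$ and $g \in C^{s-1}(\mcal{X})$, then $\mcal{L}_w^k f \in H_0^{s-2k}(\mcal{X},g;CM)$ for every integer $0 \le k \le s/2$, which follows by mollifying $C_c^\infty$ approximations of $f/g^r$ and passing to the limit in $H^s(\mcal{X},g)$.

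For odd $s = 2m+1$, self-adjointness of $L_{w,\epsilon}$ with respect to $\langle\cdot,\cdot\rangle_{g^{p-r}}$ yields $\langle L_{w,\epsilon}^s f, f\rangle_{g^{p-r}} = \langle L_{w,\epsilon}u, u\rangle_{g^{p-r}} = \tfrac{1}{2} E_{w,\epsilon}(u,\mcal{X})$ with $u := L_{w,\epsilon}^m f$. I would decompose $u = \sigma_1^m \mcal{L}_w^m f + r$ and use the inequality $E_{w,\epsilon}(u,\mcal{X}) \le 2E_{w,\epsilon}(\sigma_1^m \mcal{L}_w^m f, \mcal{X}) + 2E_{w,\epsilon}(r,\mcal{X})$. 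The first term is $\lesssim M^2$ by Lemma \ref{lemma61} applied to $\mcal{L}_w^m f \in H^1(\mcal{X},g;CM)$ (valid since $s-2m = 1$). For the second term, Lemma \ref{lemma64} bounds it by $C\epsilon^{-2}\|r_g\|_{L^2(\mcal{X},g^{p+r})}^2$, and I would split the $L^2$-norm into interior and boundary contributions: on $\mcal{X}_{m\epsilon}$, Lemma \ref{lemma62interior} with $t = m$ gives $\|r_g\|^2 \lesssim M^2\epsilon^2$; on $\partial_{m\epsilon}\mcal{X}$, the triangle inequality together with Lemma \ref{lemma62boundary} at $t=m$ (using $s-2t=1$) controls $\|L_{w,\epsilon}^m f\|^2 \lesssim \epsilon^2 M^2$, and the Hardy-type estimate from Step 1 of the proof of Lemma \ref{lemma62boundary} applied to $\mcal{L}_w^m f \in H_0^1$ controls $\|\mcal{L}_w^m f\|^2 \lesssim \epsilon^2 M^2$. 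Combining, $E_{w,\epsilon}(r,\mcal{X}) \lesssim M^2$, closing the odd case.

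For even $s = 2m$, we have $\langle L_{w,\epsilon}^s f, f\rangle_{g^{p-r}} = \|L_{w,\epsilon}^m f\|_{L^2(\mcal{X},g^{p-r})}^2 \le 2\sigma_1^{2m}\|\mcal{L}_w^m f\|_{L^2(g^{p-r})}^2 + 2\|L_{w,\epsilon}^m f - \sigma_1^m \mcal{L}_w^m f\|_{L^2(g^{p-r})}^2$. The first piece is $\lesssim M^2$ from $\|\mcal{L}_w^m f\|_{L^2(g^{p-r})} \lesssim \|f\|_{H^{2m}(\mcal{X},g)} \le M$. The main obstacle is the second piece, since Lemma \ref{lemma62interior} does not cover $t = m = s/2$; to overcome this, I would write
\begin{align*}
L_{w,\epsilon}^m f - \sigma_1^m \mcal{L}_w^m f = L_{w,\epsilon}\rho + \sigma_1^{m-1}\bigl(L_{w,\epsilon}h - \sigma_1\mcal{L}_w h\bigr),
\end{align*}
with $\rho := L_{w,\epsilon}^{m-1}f - \sigma_1^{m-1}\mcal{L}_w^{m-1}f$ and $h := \mcal{L}_w^{m-1}f \in H_0^2$. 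For the first summand, Lemma \ref{lemma6} gives $\|L_{w,\epsilon}\rho\|_{L^2(g^{p+r})} \lesssim \epsilon^{-2}\|\rho_g\|_{L^2(g^{p+r})}$, while $\|\rho_g\|_{L^2(\mcal{X},g^{p+r})}^2 \lesssim M^2\epsilon^4$: Lemma \ref{lemma62interior} at $t = m-1$ handles $\mcal{X}_{(m-1)\epsilon}$, and on $\partial_{(m-1)\epsilon}\mcal{X}$ I combine Lemma \ref{lemma62boundary} (applied to $L_{w,\epsilon}^{m-1}f$ with $2(m-1) < s$) with the Hardy estimate applied to $\mcal{L}_w^{m-1}f \in H_0^2$, each giving $\lesssim M^2\epsilon^4$; hence $\|L_{w,\epsilon}\rho\|_{L^2(g^{p+r})} \lesssim M$. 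For the second summand with $h \in H_0^2$, I would adapt the Taylor-expansion proof of Lemma \ref{lemma62interior} by expanding $h_g$ only to second order with $L^2$-form remainder, yielding $\|L_{w,\epsilon}h - \sigma_1\mcal{L}_w h\|_{L^2(\mcal{X}_\epsilon,g^{p+r})} \lesssim \|h\|_{H^2(\mcal{X},g)} \lesssim M$; the boundary strip $\partial_\epsilon \mcal{X}$ is handled by Lemma \ref{lemma6} combined with the Hardy estimate for $h \in H_0^2$. Summing the pieces gives $\|L_{w,\epsilon}^m f\|_{L^2(g^{p-r})}^2 \lesssim M^2$, completing the proof.
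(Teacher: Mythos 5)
Your argument is correct and uses the same toolkit as the paper (self-adjointness to reduce to a Dirichlet energy or to $\|L_{w,\epsilon}^{m}f\|^2$, Lemma \ref{lemma61} for the leading term, Lemma \ref{lemma62interior} for the interior approximation error, and Lemmas \ref{lemma6}/\ref{lemma64} with Lemma \ref{lemma62boundary} near the boundary), but the bookkeeping is genuinely different in two places. In the odd case the paper first splits the Dirichlet energy $E_{w,\epsilon}(L_{w,\epsilon}^{t}f,\mcal{X})$ into the regions $\mcal{X}_{t\epsilon}$ and $\partial_{t\epsilon}\mcal{X}$, so near the boundary it only needs Lemma \ref{lemma64} plus Lemma \ref{lemma62boundary} applied to $L_{w,\epsilon}^{t}f$ and never has to say anything about $\mcal{L}_{w}^{t}f$ there; your global decomposition $u=\sigma_1^{m}\mcal{L}_{w}^{m}f+r$ forces you to also control $\mcal{L}_{w}^{m}f$ on the boundary strip, which is why you need the extra claim $\mcal{L}_{w}^{k}f\in H_{0}^{s-2k}(\mcal{X},g;CM)$ together with the Hardy-type estimate. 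That claim is believable (images of the compactly supported approximants remain compactly supported, $H_{0}^{s-2k}$ is closed, and $\mcal{L}_{w}:H^{s}(\mcal{X},g)\to H^{s-2}(\mcal{X},g)$ is bounded when $g\in C^{s-1}$ — a hypothesis the lemma statement omits but which both your proof and the paper's, via Lemma \ref{lemma62interior}, require), but it is an additional lemma you must actually prove, and the paper's region-splitting buys you freedom from it; so if you keep your arrangement, spell that closedness/mollification argument out. Second, your even case is more explicit than the paper's ``similar arguments with $t=(s-2)/2$,'' and it hinges on the estimate $\|L_{w,\epsilon}h-\sigma_1\mcal{L}_{w}h\|_{L^{2}(\mcal{X}_{\epsilon},g^{p+r})}\lesssim\|h\|_{H^{2}(\mcal{X},g)}$ for $h=\mcal{L}_{w}^{m-1}f\in H^{2}$: this is not literally an instance of Lemma \ref{lemma62interior} (which requires $t<s/2$), but it does fall out of the same Taylor computation (it is essentially the ``$l\le 2$'' case used inside the paper's induction), so state and prove it as a separate estimate rather than citing the lemma. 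With those two items filled in, your proof closes, and it has the minor advantage of making the even case fully explicit.
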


\begin{proof}[Proof of Lemma \ref{lemma62}]
    \textcolor{black}{Note here that we fix the assumption that $f\in H^{1}(\mcal{X},g;M)$ besides $f\in H_{0}^{s}(\mcal{X},g;M)$, which is missing in the statement of \cite[Theorem 3]{green2021minimax}. In general, it is not true for $\mcal{X}\neq\mbb{R}^{d}$ that $H_{0}^{1}(\mcal{X},g;M)=H^{1}(\mcal{X},g;M)$.} Based on Lemma \ref{lemma61}, we will prove Lemma \ref{lemma62} in a recursive way for $s>1$. Recall that $L_{w,\epsilon}$ is self-adjoint with respect to the weighted inner product, meaning $\langle L_{w,\epsilon}f_1,f_2\rangle_{g^{p-r}}=\langle f_1,L_{w,\epsilon}f_2\rangle_{g^{p-r}}$ for any $f_1/g^r,f_2/g^r\in L^{2}(\mcal{X},g^{p+r})$. Also recall the definition of the Dirichlet energy given in Lemma~\ref{lemma64}, which can be stated as $E_{w,\epsilon}(f,\mcal{X}) = 2\langle L_{w,\epsilon}f,f\rangle_{g^{p-r}}$.

Following the procedure in \cite{green2021minimax} \footnote{We remark here that the factor 2 is missing in \cite[Section D.4]{green2021minimax}.}, when $s=2t+1$ for $t\ge 1$, by using self-adjointness, we have
\begin{align*}
    \langle L_{w,\epsilon}^{s}f,f\rangle_{g^{p-r}}=\langle L_{w,\epsilon}^{t+1}f,L_{w,\epsilon}^{t}f\rangle_{g^{p-r}}=\frac{1}{2}E_{w,\epsilon}(L_{w,\epsilon}^{t}f,\mcal{X}).
\end{align*}
We divide the Dirichlet energy into two parts:
\begin{align*}
    &E_{w,\epsilon}(L_{w,\epsilon}^{t}f,\mcal{X})\\&=\frac{1}{\epsilon^{d+2}}\int_{\mcal{X}_{t\epsilon}}\int_{\mcal{X}_{t\epsilon}}(g(x)^{-r}f(x)-g(z)^{-r}f(z))^2\frac{\eta\left(\frac{\|x-z\|}{\epsilon}\right)}{g(x)^{1-q/2}g(z)^{1-q/2}}g(x)g(z)dxdz\\&\quad+\frac{1}{\epsilon^{d+2}}\int_{\partial_{t\epsilon} \mcal{X}}\int_{\partial_{t\epsilon} \mcal{X}}(g(x)^{-r}f(x)-g(z)^{-r}f(z))^2\frac{\eta\left(\frac{\|x-z\|}{\epsilon}\right)}{g(x)^{1-q/2}g(z)^{1-q/2}}g(x)g(z)dxdz\\&=:E_{w,\epsilon}(L_{w,\epsilon}^{t}f,\mcal{X}_{t\epsilon})+E_{w,\epsilon}(L_{w,\epsilon}^{t}f,\partial_{t\epsilon} \mcal{X}),
\end{align*}
where $\mcal{X}_{t\epsilon}$ and $\partial \mcal{X}_{t\epsilon}$ have been introduced right before Lemma~\ref{lemma62interior} ($\partial \mcal{X}_{t\epsilon} \subset \mcal X$ consists of points $t\epsilon$-close to the boundary of $\mcal X$, and $\mcal{X}_{t\epsilon} = \mcal X \setminus \partial \mcal{X}_{t\epsilon}$). 

By Jensen's inequality, we have
\begin{align*}
    E_{w,\epsilon}(L_{w,\epsilon}^{t}f,\mcal{X}_{t\epsilon})&\le 3\sigma_{1}^{2t}E_{w,\epsilon}\left(\sigma_{1}^{t}L_{w}^{t}f,\mcal{X}_{t\epsilon}\right)\\&\quad+\frac{6}{\epsilon^{d+2}}\int_{\mcal{X}_{t\epsilon}}\int_{\mcal{X}_{t\epsilon}}\left(g(x)^{-r}L_{w,\epsilon}^{t}f(x)-g(z)^{-r}\sigma_{1}^{t}L_{w}^{t}f(z)\right)^2\\&\qquad\qquad\qquad\frac{\eta\left(\frac{\|x-z\|}{\epsilon}\right)}{g(x)^{1-q/2}g(z)^{1-q/2}}g(x)g(z)dxdz.
\end{align*}
By definition \eqref{weightedop}, we have $\mcal{L}_{w}^{t}f\in H^{1}(\mcal{X},g;C\|g\|_{C^{s-1}(\mcal{X})}^{t}M)$ for some constant $C>0$, an application of Lemma \ref{lemma61} shows $E_{w,\epsilon}\left(\sigma_{1}^{t}L_{w}^{t}f,\mcal{X}_{t\epsilon}\right)\lesssim M^{2}$. We then focus on the second term on the right-hand side of the above inequality. According to Lemma \ref{lemma62interior}, we obtain:
\begin{align*}
    &\frac{1}{\epsilon^{d+2}}\int_{\mcal{X}_{t\epsilon}}\int_{\mcal{X}_{t\epsilon}}\left(g(x)^{-r}L_{w,\epsilon}^{t}f(x)-g(z)^{-r}\sigma_{1}^{t}\mcal{L}_{w}^{t}f(z)\right)^2\frac{\eta\left(\frac{\|x-z\|}{\epsilon}\right)}{g(x)^{1-q/2}g(z)^{1-q/2}}g(x)g(z)dxdz\\&\lesssim \frac{1}{\epsilon^{d+2}}\int_{\mcal{X}_{t\epsilon}}\int_{\mcal{X}_{t\epsilon}}g(x)^{p+r}\left(g(x)^{-r}L_{w,\epsilon}^{t}f(x)-g(x)^{-r}\sigma_{1}^{t}\mcal{L}_{w}^{t}f(x)\right)^2\eta\left(\frac{\|x-z\|}{\epsilon}\right)dxdz\\&\lesssim \frac{1}{\epsilon^{2}}\int_{\mcal{X}_{t\epsilon}}g(x)^{p-r}\left(L_{w,\epsilon}^{t}f(x)-\sigma_{1}^{t}\mcal{L}_{w}^{t}f(x)\right)^2dx\\&\lesssim M^{2}.
\end{align*}
Furthermore, near the boundary, according to Lemma \ref{lemma64} and Lemma \ref{lemma62boundary}, it yields that
\begin{align*}
    E_{w,\epsilon}(L_{w,\epsilon}^{t}f,\partial_{t\epsilon} \mcal{X})\lesssim \frac{1}{\epsilon^{2}}\|L_{w,\epsilon}^{t}f\|_{L^{2}(\partial_{t\epsilon}\mcal{X},g^{p+r})}\lesssim M^2.
\end{align*}

Putting all pieces above together, we obtain the proof for the case when $s$ is odd and $t:=(s-1)/2$. Similar arguments can be applied to the case when $s$ is even and $t:=(s-2)/2$. Therefore, combining all above together, we obtain for all integer $s>1$:
\begin{align*}
    \langle L_{w,\epsilon}^{s}f,f \rangle_{g^{p-r}}\lesssim M^{2}.
\end{align*}
\end{proof}

We are now in the position to prove the main results of Section \ref{mainthm}.
\begin{proof}[Proof of Theorem \ref{firstorder}]
By Cauchy-Schwarz inequality, we have: for all $s\in\mbb{N}_{+}$:
\begin{align*}
    \|\hat{f}-f\|_{w,n}^2\le 2(\|\mbb{E}\hat{f}-f|_{w,n}^2+\|\hat{f}-\mbb{E}\hat{f}\|_{w,n}^2).
\end{align*}
Then, according to PCR-WLE algorithm in Section \ref{modelandalg}, we obtain
\begin{align}\label{decomp:bias}
    \|\mbb{E}\hat{f}-f\|_{w,n}^{2}=\sum_{k=K+1}^{n}\langle v_k,f \rangle_{w,n}^2\le \frac{\langle L_{w,n,\epsilon}^{s}f,f\rangle_{w,n}}{\lambda_{K+1}^{s}},
\end{align}
and
\begin{align*}
    \|\hat{f}-\mbb{E}\hat{f}\|_{w,n}^2=\sum_{k=1}^{K}\langle v_k,\varepsilon\rangle_{w,n}^{2}.
\end{align*}
Since $\langle v_k,\varepsilon\rangle_{w,n}$ is normally distributed with $0$ mean and variance:
\begin{align}\label{var1}
    \Var \langle v_k,\varepsilon\rangle_{w,n}=\frac{1}{n^2}v_k^TD^{\frac{2(p-1-r)}{q-1}}v_k,
\end{align}
where $\langle v_k,v_k\rangle_{w,n}=\frac{1}{n}v_{k}^{T}D^{\frac{p-1-r}{q-1}}v_k=1$. Note that $\langle v_k/\sqrt{n},v_k/\sqrt{n}\rangle_{g^{p-r}}=1$, then we have
\begin{align}\label{eigenD}
    \underset{v_{k}/\sqrt{n}\in\mbb{R}^{n}}{\min}~\frac{1}{n}v_k^{T}D^{\frac{p-1-r}{q-1}}D^{\frac{p-1-r}{q-1}}v_{k}
\end{align}
is the smallest eigenvalue of the matrix $D^{\frac{p-1-r}{q-1}}$ with respect to the inner product $\langle\cdot,\cdot\rangle_{g^{p-r}}$. As $D$ is a diagonal matrix with the $(i,i)$-element as $d_i$, according to Section \ref{kdesectionpf}, it is bounded from below, say by a constant $C>0$, almost surly for $n$ large enough. Then, combining \eqref{var1} and \eqref{eigenD}, we have:
\begin{align*}
    \|\hat{f}-\mbb{E}\hat{f}\|_{w,n}^2=\frac{1}{n}\sum_{k=1}^{K}(\sqrt{n}\langle v_k,\epsilon\rangle_{w,n})^{2},
\end{align*}
with $\sqrt{n}\langle v_k,\epsilon\rangle_{w,n}$ being normal with mean $0$ and variance 
\begin{align*}
    \Var(\sqrt{n} \langle v_k,\epsilon\rangle_{w,n})\ge C>0.
\end{align*}
According to an exponential inequality for chi-square distributions from \cite{laurent2000adaptive}, we obtain:
\begin{align}\label{decomp:variance}
    \mbb{P}\left(\|\hat{f}-\mbb{E}\hat{f}\|_{w,n}^2\ge \frac{CK}{n}+2\frac{\sqrt{K}}{n}\sqrt{t}+2\frac{t}{n}\right)\le e^{-t}.
\end{align}
With \eqref{decomp:bias} and \eqref{decomp:variance}, it yields
\begin{align}\label{biasvariancedecomp}
    \|\hat{f}-f\|_{w,n}^{2}\le \frac{\langle L_{w,n,\epsilon}^{s}f,f\rangle_{w,n}}{\lambda_{K+1}^{s}}+\frac{CK}{n},
\end{align}
with probability at least $1-e^{-K}$ if $1\le K\le n$. Then, it remains to bound the empirical weighted Sobolev seminorm $\langle L_{w,n,\epsilon}^{s}f,f\rangle_{w,n}$ and the graph weighted Laplacian eigenvalue $\lambda_{K+1}^{s}$.

We will first focus on $\langle L_{w,n,\epsilon}^{s}f,f\rangle_{w,n}$ for $s=1$. By definition \eqref{def:weightedL}, we have by symmetry:
\begin{align}\label{disexpress}
    \mbb{E}\langle L_{w,n,\epsilon}f,f\rangle_{w,n}=\frac{1}{2}\mbb{E}\left(\frac{1}{\epsilon^{d+2}}|d_{i}^{-\frac{r}{q-1}}f(X_i)-d_{j}^{-\frac{r}{q-1}}f(X_j)|^{2}d_{i}^{\frac{1-p}{q-1}}\frac{\eta\left(\frac{\|X_i-X_{j}\|}{\epsilon}\right)}{\tilde{d}_{i}^{1-q/2}\tilde{d}_{j}^{1-q/2}}\right).
\end{align}
We would like to point out here that the normalization factor $\epsilon^{-(d+2)}$ is motivated by the fact that a factor of $\epsilon^{-d}$ is needed to scale $\eta\left(\frac{\|X_i-X_j\|}{\epsilon}\right)$ and the remaining factor, $\epsilon^{-2},$ stabilized the squared differences of $d_{i}^{-\frac{r}{q-1}}$ under the expectation.

According to Section \ref{kdesectionpf} and by conditioning on $X_i$ and the law of iterated expectation, we have for $n$ large enough,
\begin{align}\label{distononlocal}
\resizebox{0.95\hsize}{!}{$\left|\mbb{E}\left(\frac{1}{\epsilon^{d+2}}|d_{i}^{-\frac{r}{q-1}}f(X_i)-d_{j}^{-\frac{r}{q-1}}f(X_j)|^{2}d_{i}^{\frac{1-p}{q-1}}\frac{\eta\left(\frac{\|X_i-X_{j}\|}{\epsilon}\right)}{\tilde{d}_{i}^{1-q/2}\tilde{d}_{j}^{1-q/2}}\right)-2\langle L_{w,\epsilon}f,f\rangle_{g^{p-r}}\right| \lesssim \Delta(n,\epsilon,\eta,g)+\epsilon,$}
\end{align}
where 
\begin{align*}
    \Delta(n,\epsilon,\eta,g): =\frac{1}{n}g_{\max}+\frac{\eta(0)}{n\epsilon^{d}}+\frac{n-1}{n}\Big(\sqrt{\frac{|\log \epsilon|}{n\epsilon^{d}}}+\epsilon\Big) \to 0\quad\text{as }\;n\to \infty.
\end{align*}

Combining \eqref{disexpress}, \eqref{distononlocal} and Lemma \ref{lemma61}, we obtain:
\begin{align*}
    \mbb{E}\langle L_{w,n,\epsilon}f,f\rangle_{w,n}\lesssim M^{2}+\Delta(n,\epsilon,\eta,g)+\epsilon.
\end{align*}
Consequently, by Markov's inequality, we have: for any $\delta\in(0,1)$,
\begin{align}\label{markovs=1}
    \langle L_{w,n,\epsilon}f,f\rangle_{w,n}\lesssim \frac{1}{\delta}\left(M^{2}+\Delta(n,\epsilon,\eta,g)+\epsilon\right),
\end{align}
with probability at least $1-\delta$. Note that the above bound on the expected weighted Sobolev seminorm generalizes the results in \cite{green2021minimax} to the weighted Laplacians by some properties of KDE.

Next, we proceed to the higher order case when $s>1$ for $\langle L_{w,n,\epsilon}^{s}f,f\rangle_{w,n}$. We define the following difference operator:
\begin{align*}
    D_{j}f(x)=(d_{\cdot}^{-\frac{r}{q-1}}f(x)-d_{j}^{-\frac{r}{q-1}}f(X_j))d_{\cdot}^{\frac{1-p}{q-1}}w_{\cdot,j}^{\epsilon},
\end{align*}
where $d_{\cdot}$ and $w_{\cdot,j}^{\epsilon}$ are defined by replacing $X_i$ by 
$x$ in both $d_{i}$ and $w_{i,j}^{\epsilon}.$ Furthermore, let $D_{\mathbf{j}}f(x):=(D_{j_1}f\circ\ldots\circ D_{j_s}f)(x)$, where $\textbf{j} = (j_1,\ldots,j_s) \in [n]^s := \{1,\ldots,n\}^s$. Denote by $(n)^{s} $ the sub-collection of vectors in $[n]^s$ with no repeated indices and let by $i\mbf{j}\coloneqq (i,j_1,\ldots,j_{s}).$

Following the idea of \cite[Proof of Lemma 3]{green2021minimax}, we decompose the weighted Sobolev seminorm into a U-statistic, which is an unbiased estimator of the non-local Sobolev seminorm $\langle L_{w,\epsilon}^{s}f,f\rangle_{g^{p-r}}$, and a pure bias term:
\begin{align}\label{I1I2}
    \langle L_{w,n,\epsilon}^{s}f,f\rangle_{w,n}&=\frac{1}{n}\sum_{i=1}^{n}d_{i}^{\frac{p-1-r}{q-1}}L_{w,n,\epsilon}^{s}f(X_i)\cdot f(X_i)\nonumber\\&=\frac{1}{n\epsilon^{2s}}\sum_{i\mbf{j}\in (n)^{s+1}}d_{i}^{\frac{p-1-r}{q-1}}D_{\mbf{j}}f(X_i)\cdot f(X_i)\nonumber\\&\qquad+\frac{1}{n\epsilon^{2s}}\sum_{i\mbf{j}\in [n]^{s+1}\backslash(n)^{s+1}}d_{i}^{\frac{p-1-r}{q-1}}D_{\mbf{j}}f(X_i)\cdot f(X_i)\nonumber\\&=:I_1+I_2.
\end{align}

\textcolor{black}{Note that there are errors in \cite[Proof of Lemma 3]{green2021minimax} when bounding both $\mbb{E}I_{1}$ and $\mbb{E}I_{2}$. Specifically, in \cite[Lemma D.3]{green2021minimax}, there should not be a $\delta$ appearing in Equation D.4 by Markov's inequality and the power of $\epsilon$ should be $2s+d$.} Although their final result is correct, we will fix these errors in the following proof. Now, determined by whether all $i\mbf{j}$ are distinct, the empirical weighted Sobolev seminorm can be divided into two parts, $I_1$ and $I_2$. The first one involves all distinct indices where we make approximation by the so-called non-local weighted sobolev norm $\langle L_{w,\epsilon}^{s}f,f\rangle_{g^{p-r}}$; the second part focuses on the case where not all $i\mbf{j}$ are distinct and use the fact that it is related to a connected subgraph.

As for $I_1$ from \eqref{I1I2}, we have
\begin{align*}
    \mbb{E}I_1&=\frac{1}{n\epsilon^{2s}}\frac{n!}{(n-s-1)!}\mbb{E}\left(d_{i}^{\frac{p-1-r}{q-1}}D_{\mbf{j}}f(X_i)\cdot f(X_i)\right)\\&=\frac{1}{n\epsilon^{2s}}\frac{n!}{(n-s-1)!}\mbb{E}\langle D_{\mbf{j}}f(X_i),f(X_i)\rangle_{g^{p-r}},
\end{align*}
where the operator $D_{\mbf{j}}$ is iterated for $s$ different times due to the fact that $i\mbf{j}$ are all distinct. For each iteration, say $s=1$, we have
\begin{align}\label{alldiffers1}
&\quad\mbb{E}\langle D_{j}f(X_i),f(X_i)\rangle_{g^{p-r}}\nonumber\\&=\frac{\epsilon^{2}}{2n}\mbb{E}\left(\frac{1}{\epsilon^{d+2}}|d_{i}^{-\frac{r}{q-1}}f(X_i)-d_{j}^{-\frac{r}{q-1}}f(X_j)|^{2}d_{i}^{\frac{1-p}{q-1}}\frac{\eta\left(\frac{\|X_i-X_{j}\|}{\epsilon}\right)}{\tilde{d}_{i}^{1-q/2}\tilde{d}_{j}^{1-q/2}}\right).
\end{align}
Then, plugging \eqref{distononlocal} in \eqref{alldiffers1}, we obtain
\begin{align*}
    \left|\mbb{E}\langle D_{j}f(X_i),f(X_i)\rangle_{g^{p-r}}-\frac{\epsilon^{2}}{n}\langle L_{w,\epsilon}f,f\rangle_{g^{p-r}}\right|\lesssim \frac{\epsilon^{2}}{2n}(\Delta(n,\epsilon,\eta,g)+\epsilon).
\end{align*}
After $s$ times iteration, it yields that
\begin{align*}
    \left|\mbb{E}\langle D_{\mbf{j}}f(X_i),f(X_i)\rangle_{g^{p-r}}-\frac{\epsilon^{2s}}{n^{s}}\langle L_{w,\epsilon}^{s}f,f\rangle_{g^{p-r}}\right|\lesssim \frac{\epsilon^{2s}}{2^{s}n^{s}}(\Delta(n,\epsilon,\eta,g)+\epsilon).
\end{align*}
Putting all above results back in $\mbb{E}I_1$, we conclude that for $n$ large enough,
\begin{align}\label{beforestirling}
    \left|\mbb{E}I_1-\frac{n!}{n^{s+1}(n-s-1)!}\langle L_{w,\epsilon}^{s}f,f \rangle_{g^{p-r}}\right|\lesssim \frac{n!}{n^{s+1}(n-s-1)!}(\Delta(n,\epsilon,\eta,g)+\epsilon).
\end{align}
The Stirling’s formula shows
\begin{align*}
    \underset{n\rightarrow\infty}{\lim}~\frac{n!}{n^{s+1}(n-s-1)!}=1.
\end{align*}
Therefore, by \eqref{beforestirling}, we have for $n$ large enough,
\begin{align*}
    \mbb{E}I_{1}\lesssim \langle L_{w,\epsilon}^{s}f,f \rangle_{g^{p-r}}+(\Delta(n,\epsilon,\eta,g)+\epsilon).
\end{align*}
According to Lemma \ref{lemma62}, it yields that
\begin{align}\label{ExpectI1}
    \mbb{E}I_{1}\lesssim M^{2}+(\Delta(n,\epsilon,\eta,g)+\epsilon).
\end{align}

We next shift our attention to $I_{2}$ in \eqref{I1I2}:
\begin{align*}
    \frac{1}{n\epsilon^{2s}}\sum_{i\mbf{j}\in [n]^{s+1}\backslash(n)^{s+1}}d_{i}^{\frac{p-1-r}{q-1}}D_{\mbf{j}}f(X_i)\cdot (f(X_i)-f(X_{j_1})).
\end{align*}

For $i\mbf{j}$ not all distinctive, if they contains a total of $(k+1)$ distinct indices for example for $1\le k\le s-1$, we have by symmetry:
\begin{align*}
    \sum_{i\mbf{j}\in [n]^{s+1}\backslash(n)^{s+1}}d_{i}^{\frac{p-1-r}{q-1}}D_{\mbf{j}}f(X_i)\cdot f(X_i)=\frac{1}{2}\cdot \sum_{i\mbf{j}\in [n]^{s+1}\backslash(n)^{s+1}}d_{i}^{\frac{p-1-r}{q-1}}D_{\mbf{j}}f(X_i)\cdot (f(X_i)-f(X_{j_1})).
\end{align*}
Observe that in order for 
\begin{align*}
    d_{i}^{\frac{p-1-r}{q-1}}|D_{\mbf{j}}f(X_i)|\cdot |f(X_i)-f(X_{j_1})|
\end{align*}
to be non-zero, it must be the case that the graph $G_{n,\epsilon}(X_{i\mbf{j}})$ which is the subgraph induced by the vertices $X_i,X_{j_1},\ldots,X_{j_{s}}$ is complete. Since we have:
\begin{align*}
    D_{ij}f(x)&=D_{i}(D_{j}f(x))\\&=D_{i}\left((d_{\cdot}^{-\frac{r}{q-1}}f(x)-d_{j}^{-\frac{r}{q-1}}f(X_j))d_{\cdot}^{\frac{1-p}{q-1}}w_{\cdot,j}^{\epsilon}\right)\\&=(d_{\cdot}^{-\frac{r}{q-1}}D_j f(x)-d_{i}^{-\frac{r}{q-1}}D_{j}f(X_i))d_{\cdot}^{\frac{1-p}{q-1}}w_{\cdot,i}^{\epsilon},
\end{align*}
then
\begin{align*}
    |D_{j_{1}j_{2}}f(X_{i})|\le \left(d_{i}^{-\frac{r}{q-1}}|D_{j_2}f(X_{i})|+d_{j_1}^{-\frac{r}{q-1}}|D_{j_2}f(X_{j_1})|\right)d_{i}^{\frac{1-p}{q-1}}w_{i,j_1}^{\epsilon}.
\end{align*}
Repeating the above computation and by induction, it yields that for $s\ge 2$,
\begin{align*}
    |D_{\mbf{j}}f(X_i)|\le (s-1)d_{\max/\min}^{-\frac{(s-1)r}{q-1}}d_{\max/\min}^{\frac{(s-1)(1-p)}{q-1}}\left(w_{\max}^{\epsilon}\right)^{s-1}\sum_{j\in i\mbf{j}\backslash\{j_{s}\}}|D_{j_{s}}f(X_{j})|,
\end{align*}
where $d_{\max}:=\underset{i=1,\ldots,n}{\max}~d_{i}$, $d_{\min}:=\underset{i=1,\ldots,n}{\min}~d_{i}$, $w_{\max}:=\underset{i,j=1,\ldots,n}{\max}~w_{i,j}$ and $d_{\max/\min}$ means it is $d_{\max}$ if $-(s-1)r/(d-1)$ (respectively $(s-1)(1-p)/(q-1)$) are positive and it is $d_{\min}$ otherwise.

According to Section \ref{kdesectionpf}, we have for $n$ large enough, $d_{\max}$ is bounded from above and $d_{\min}$ is bounded from below a.s. and 
\begin{align*}
    w_{\max}\lesssim \frac{1}{n\epsilon^{d}},
\end{align*}
almost surely.

Consequently, it yields that
\begin{align}\label{iteration}
    &d_{i}^{\frac{p-1-r}{q-1}}|D_{\mbf{j}}f(X_i)|\cdot |f(X_i)-f(X_{j_1})|\nonumber\\&=d_{i}^{\frac{p-1-r}{q-1}}|D_{\mbf{j}}f(X_i)|\cdot |f(X_i)-f(X_{j_1})|\cdot\mbf{1}_{\{G_{n,\epsilon}(X_{i\mbf{j}})\ \text{is connected}\}}\nonumber\\&\lesssim  \frac{1}{(n\epsilon^{d})^{s-1}}\sum_{j\in i\mbf{j}\backslash\{j_s\}}\left(d_{i}^{\frac{p-1-r}{q-1}}|D_{j_{s}}f(X_{j})|\cdot|f(X_i)-f(X_{j_1})|\cdot\mbf{1}_{\{G_{n,\epsilon}(X_{i\mbf{j}})\ \text{is connected}\}}\right)\nonumber\\&=\frac{\epsilon^{2}}{n^{s}\epsilon^{d(s-1)}}\sum_{j\in i\mbf{j}\backslash\{j_s\}}\Bigg(\frac{1}{\epsilon^{d+2}}d_{i}^{\frac{p-1-r}{q-1}}|d_{j}^{-\frac{r}{q-1}}f(X_j)-d_{j_s}^{-\frac{r}{q-1}}f(X_{j_{s}})|d_{j}^{\frac{1-p}{q-1}}\frac{\eta\left(\frac{\|X_j-X_{j_s}\|}{\epsilon}\right)}{\tilde{d}_{j}^{1-q/2}\tilde{d}_{j_{s}}^{1-q/2}}\nonumber\\&\qquad|f(X_i)-f(X_{j_1})| \mbf{1}_{\{G_{n,\epsilon}(X_{i\mbf{j}})\ \text{is connected}\}}\Bigg),
\end{align}
where we again assign $\epsilon^{d+2}$ as a normalization factor into the expectation as \eqref{disexpress}. 

Now, note that for $j=i$ in the summand on the right-hand side of \eqref{iteration}, we have according to Section \ref{kdesectionpf}:
\begin{align}\label{j=i}
    &\mbb{E}\Bigg(\frac{1}{\epsilon^{d+2}}d_{i}^{-\frac{r}{q-1}}|d_{i}^{-\frac{r}{q-1}}f(X_i)-d_{j_s}^{-\frac{r}{q-1}}f(X_{j_{s}})|\frac{\eta\left(\frac{\|X_i-X_{j_s}\|}{\epsilon}\right)}{\tilde{d}_{i}^{1-q/2}\tilde{d}_{j_{s}}^{1-q/2}}|f(X_i)-f(X_{j_1})|\nonumber\\&\qquad \mbf{1}_{\{G_{n,\epsilon}(X_{i\mbf{j}})\ \text{is connected}\}}\Bigg)\nonumber\\&\lesssim\mbb{E}\left(\left(\frac{1}{\epsilon^{d+2}}|d_{i}^{-\frac{r}{q-1}}f(X_i)-d_{j_s}^{-\frac{r}{q-1}}f(X_{j_{s}})|\frac{\eta\left(\frac{\|X_i-X_{j_s}\|}{\epsilon}\right)}{\tilde{d}_{i}^{1-q/2}\tilde{d}_{j_{s}}^{1-q/2}}|d_{i}^{-\frac{r}{q-1}}f(X_i)-d_{j_{1}}^{-\frac{r}{q-1}}f(X_{j_{1}})|\right.\right.\nonumber\\&\qquad+(\Delta(n,\epsilon,\eta,g)+\epsilon)\Bigg)\mbf{1}_{\{G_{n,\epsilon}(X_{i\mbf{j}})\ \text{is connected}\}}\Bigg)\nonumber\\&\lesssim \mbb{E}\left(\left(\frac{1}{\epsilon^{d+2}}|d_{i}^{-\frac{r}{q-1}}f(X_i)-d_{j_s}^{-\frac{r}{q-1}}f(X_{j_{s}})|^{2}\frac{\eta\left(\frac{\|X_i-X_{j_s}\|}{\epsilon}\right)}{\tilde{d}_{i}^{1-q/2}\tilde{d}_{j_{s}}^{1-q/2}}+(\Delta(n,\epsilon,\eta,g)+\epsilon)\right)\right.\nonumber\\&\qquad\mbf{1}_{\{G_{n,\epsilon}(X_{i\mbf{j}})\ \text{is connected}\}}\Bigg),
\end{align}
where the last inequality is by Cauchy–Schwarz inequality and $X_1,\ldots,X_{n}$ being i.i.d. data. Then, by integrating out all indices in $\mbf{j}$ not equal to $i$ or $j_{s}$, it yields that
\begin{align}\label{k+1}
    &\mbb{E}\left(\left(\frac{1}{\epsilon^{d+2}}|d_{i}^{-\frac{r}{q-1}}f(X_i)-d_{j_s}^{-\frac{r}{q-1}}f(X_{j_{s}})|^{2}\frac{\eta\left(\frac{\|X_i-X_{j_s}\|}{\epsilon}\right)}{\tilde{d}_{i}^{1-q/2}\tilde{d}_{j_{s}}^{1-q/2}}+(\Delta(n,\epsilon,\eta,g)+\epsilon)\right)\right.\nonumber\\&\qquad\mbf{1}_{\{G_{n,\epsilon}(X_{i\mbf{j}})\ \text{is connected}\}}\Bigg)\nonumber\\&\lesssim \left(C\epsilon^{d} g_{\max}V_{d}\right)^{k-1}\mbb{E}\left(\left(\frac{1}{\epsilon^{d+2}}|d_{i}^{-\frac{r}{q-1}}f(X_i)-d_{j_s}^{-\frac{r}{q-1}}f(X_{j_{s}})|^{2}\frac{\eta\left(\frac{\|X_i-X_{j_s}\|}{\epsilon}\right)}{\tilde{d}_{i}^{1-q/2}\tilde{d}_{j_{s}}^{1-q/2}}\right.\right.\nonumber\\&\qquad+(\Delta(n,\epsilon,\eta,g)+\epsilon)\Bigg)\Bigg).
\end{align}

Therefore, according to \eqref{j=i}, \eqref{k+1}, \eqref{distononlocal} and Lemma \ref{lemma61}, we obtain
\begin{align}\label{jifinal}
    &\mbb{E}\Bigg(\frac{1}{\epsilon^{d+2}}d_{i}^{-\frac{r}{q-1}}|d_{i}^{-\frac{r}{q-1}}f(X_i)-d_{j_s}^{-\frac{r}{q-1}}f(X_{j_{s}})|\frac{\eta\left(\frac{\|X_i-X_{j_s}\|}{\epsilon}\right)}{\tilde{d}_{i}^{1-q/2}\tilde{d}_{j_{s}}^{1-q/2}}|f(X_i)-f(X_{j_1})|\nonumber\\&\qquad \mbf{1}_{\{G_{n,\epsilon}(X_{i\mbf{j}})\ \text{is connected}\}}\Bigg)\nonumber\\&\lesssim \epsilon^{d(k-1)}\left(M^{2}+\Delta(n,\epsilon,\eta,g)+\epsilon\right).
\end{align}
Applying a similar approach to all $j\neq j_{s}$ and plugging \eqref{jifinal} in \eqref{iteration} and \eqref{I1I2}, we have
\begin{align*}
    \mbb{E}I_{2}&\lesssim \frac{1}{n\epsilon^{2s}}\frac{1}{n^{s}\epsilon^{d(s-1)}}\sum_{k=1}^{s-1}\epsilon^{d(k-1)}\left(M^{2}+\Delta(n,\epsilon,\eta,g)\right)n^{k+1}\\&\lesssim \frac{\epsilon^{2}}{n\epsilon^{2s}}\left(M^{2}+\Delta(n,\epsilon,\eta,g)+\epsilon\right)\sum_{k=1}^{s-1}\frac{(n\epsilon^{d})^{k}}{(n\epsilon^{d})^{s}}n.
\end{align*}
Note that the above sum is bounded from above when $k=s-1$ by the assumption $n\epsilon^{d}\ge 1$. Finally, we conclude that
\begin{align}\label{ExpectI2}
    \mbb{E}I_{2}\lesssim \frac{\epsilon^{2}}{n\epsilon^{2s+d}}\left(M^{2}+\Delta(n,\epsilon,\eta,g)+\epsilon\right).
\end{align}

Finally, combining \eqref{I1I2}, \eqref{ExpectI1} and \eqref{ExpectI2}, we obtain:
\begin{align*}
    \mbb{E}\langle L_{w,n,\epsilon}^{s}f,f\rangle_{w,n}&\lesssim M^{2}+(\Delta(n,\epsilon,\eta,g)+\epsilon)+\frac{\epsilon^{2}}{n\epsilon^{2s+d}}\left(M^{2}+\Delta(n,\epsilon,\eta,g)+\epsilon\right)\\&\lesssim M^{2}+(\Delta(n,\epsilon,\eta,g)+\epsilon),
\end{align*}
where the last step is by the assumption that $\epsilon\gtrsim  n^{-1/(2(s-1)+d)}$. By Markov's inequality, we have for any $\delta\in (0,1)$,
\begin{align}\label{markovs>1}
    \langle L_{w,n,\epsilon}^{s}f,f\rangle_{w,n}\lesssim \frac{1}{\delta}\left(M^{2}+(\Delta(n,\epsilon,\eta,g)+\epsilon\right),
\end{align}
with probability at least $1-2\delta$. This bound can be considered as a higher order variant of \eqref{markovs=1} for $s>1$.

Now, recall the bound \eqref{biasvariancedecomp}. We have bounded the empirical weighted Sobolev seminorm by \eqref{markovs=1} and \eqref{markovs>1}. It remains to bound the eigenvalues $\lambda_{K+1}$.

According to Lemma \ref{eigenref}, we have:
\begin{align}\label{eigenboundreadytouse}
    \lambda_{k}=\lambda_{k}(L_{w,n,\epsilon})\gtrsim \lambda_{k}(\mcal{L}_{w}) \wedge \epsilon^{2},\ \text{for all}\ 2\le k\le n,
\end{align}
with probability at least $1-Cne^{-cn\epsilon^{d}}$ for some constants $C,c>0$.

For $s=1$, combining \eqref{biasvariancedecomp}, \eqref{markovs=1} and \eqref{eigenboundreadytouse}, we have with probability at least $1-\delta-Cne^{-cn\epsilon^{d}}-e^{-K}$ and $n$ large enough:
\begin{align*}
    \|\hat{f}-f\|_{w,n}^{2}\lesssim \frac{M^{2}}{\delta\left(\lambda_{K+1}(\mcal{L}_{w}) \wedge \epsilon^{2}\right)}+\frac{K}{n}.
\end{align*}
Furthermore, based on the assumption $\epsilon\lesssim K^{-1/d}$ and Proposition \ref{refWeyl}, the above inequality becomes:
\begin{align}\label{tradeoffres}
    \|\hat{f}-f\|_{w,n}^{2}\lesssim \frac{M^{2}}{\delta}(K+1)^{-2/d}+\frac{K}{n}.
\end{align}
By balancing the two terms on the right-hand side, we pick $K=\lfloor M^2n \rfloor^{d/(2+d)}$. Then, it yields that
\begin{align}\label{finallyminimaxopt}
    \|\hat{f}-f\|_{w,n}^{2}\lesssim \frac{1}{\delta}M^2(M^2n)^{-2/(2+d)}.
\end{align}
If $M^2<n^{-1}$, we can take $K=1$ and obtain from \eqref{tradeoffres} that:
\begin{align*}
\|\hat{f}-f\|_{w,n}^{2}\lesssim \frac{1}{n\delta}.
\end{align*}
If $M>n^{1/d}$, we take $K=n$ and in this case, we actually have $\hat{f}(X_i)=Y_i$ for $i=1,\ldots,n$ and
\begin{align*}
    \|\hat{f}-f\|_{w,n}^{2}=\frac{1}{n}\sum_{i=1}^{n}\varepsilon_{i}^{2}\lesssim C,
\end{align*}
with probability at least $1-e^{-n}$ for some constant $C$. Combining all above cases depending on choices of $K$, it yields that bound in Theorem \ref{firstorder} .

For $s>1$, the proof follows in a similar way by considering \eqref{markovs>1} instead of \eqref{markovs=1}.
\end{proof}

\subsection{Proof of Theorem \ref{Lepski}}
\begin{proof}[Proof of Theorem \ref{Lepski}]
Recall the construction of the estimator based on Lepski's procedure: $\hat{f}_{\adapt}=\hat{f}_{\hat{s},\hat{M}}$ with $\hat{s},\hat{M}$ given in Section \ref{seclepski}. Let the event $\mcal{E}_{j}$ be that $\hat{s}=s_{j}$ and suppose $s=s_{i}$ for the true smooth parameter. 

First of all, it suffices to consider $M\in \mcal{D}$ by realizing that if $M\in (M_{j-1},M_{j})$, then $f\in H^{s}(\mcal{X},g;M)$ with $H^{s}(\mcal{X},g;M_{j-1})\subset H^{s}(\mcal{X},g;M)\subset H^{s}(\mcal{X},g;M_{j})$. Now, we also suppose $M=M_i$ correspondingly and consider bounding the sum:
\begin{align*}
    \sum_{j=1}^{N_l}\left(\|\hat{f}_{s_j}-f\|_{w,n}^{2}M_i^{-2}(M_i^2 n/\log n )^{2s_i/(2s_i+d)}\mbf{1}_{\mcal{E}_{j}}\right),
\end{align*}
conditional on the event that the sample points $X_1,\ldots,X_n$ satisfy \eqref{biasvariancedecomp} and \eqref{eigenboundreadytouse} with $K=\lfloor M_i^2n \rfloor^{d/(2s_i+d)}$. These two statements hold with probability at least $1-Cne^{-Cn\epsilon^{d}}-e^{-\lfloor M_i^2n \rfloor^{d/(2s_i+d)}}$. As we will see, the fact that this sum does not explode, relies on the fact that the probabilities of the sets ${\mcal E}_j$ get small as $n \to \infty$.

First, note that by Cauchy-Schwarz inequality, we have
\begin{align*}
    &\sum_{j=i}^{N_l}\left(\|\hat{f}_{s_j}-f\|_{w,n}^{2}M_i^{-2}(M_i^2 n/\log n )^{2s_i/(2s_i+d)}\mbf{1}_{\mcal{E}_{j}}\right)\\&\le \sum_{j=i}^{N_l}\left(\|\hat{f}_{s_j}-\hat{f}_{s_i}+\hat{f}_{s_j}-f\|_{w,n}^{2}M_i^{-2}(M_i^2 n/\log n )^{2s_i/(2s_i+d)}\mbf{1}_{\mcal{E}_{j}}\right)\\&\le \sum_{j=i}^{N_l}\left(2c_0^2 \mbf{1}_{\mcal{E}_{j}}+2\left(\|\hat{f}_{s_i}-f\|_{w,n}^{2}M_i^{-2}(M_i^2 n/\log n)^{2s_i/(2s_i+d)}\mbf{1}_{\mcal{E}_{j}}\right)\right)\\&\le 2c_0^2+2\left(\|\hat{f}_{s_i}-f\|_{w,n}^{2}M_i^{-2}(M_i^2 n/\log n)^{2s_i/(2s_i+d)}\right).
\end{align*}
Therefore, according to Theorem \ref{firstorder}, we have: for any $\delta\in(0,1)$,
\begin{align*}
    \sum_{j=i}^{N_l}\left(\|\hat{f}_{s_j}-f\|_{w,n}^{2}M_i^{-2}(M_i^2 n/\log n )^{2s_i/(2s_i+d)}\mbf{1}_{\mcal{E}_{j}}\right)\lesssim \frac{1}{\delta},
\end{align*}
with probability at least $1-\delta \log^{-2s_i/(2s_i+d)}n-Cne^{-Cn\epsilon^{d}}-e^{-\lfloor M_i^2n \rfloor^{d/(2s_i+d)}}$.

Next, we consider the other part when $j<i$:
\begin{align}\label{sumj<i1}
    \sum_{j=1}^{i-1}\left(\|\hat{f}_{s_j}-f\|_{w,n}^{2}M_i^{-2}(M_i^2 n/\log n )^{2s_i/(2s_i+d)}\mbf{1}_{\mcal{E}_{j}}\right).
\end{align}

By the definition, on the event $\mcal{E}_{j}$, there exists $s'\in \mcal{B}$ with $s'<s_i$ such that $\|\hat{f}_{s_i}-\hat{f}_{s'}\|_{w,n}>c_0 M'^{-2}(M'^2 n/\log n)^{-s'/(2s'+d)}$. This means $\|\hat{f}_{s_i}-\hat{f}_{s'}\|_{w,n}^2 M'^{-2}(M'^2 n/\log n)^{2s'/(2s'+d)}>c_0^{2}$. By triangle inequality, this implies we have either $\|\hat{f}_{s_i}-f\|_{w,n}^2 M'^{-2}(M'^2 n/\log n)^{2s'/(2s'+d)}>c_0^2/4$ or $\|\hat{f}_{s'}-f\|_{w,n}^2 M'^{-2}(M'^2 n/\log n)^{2s'/(2s'+d)}>c_0^2/4$. Then, we have
\begin{align}\label{PmalE}
    \mbb{P}(\mcal{E}_{j})&\le \sum_{l=1}^{i-1}\left(\mbb{P}\left(\|\hat{f}_{s_i}-f\|_{w,n}^2 M_l^{-2}(M_l^2 n/\log n)^{2s_l/(2s_l+d)}>c_0^2/4\right)\right.\nonumber\\&\quad\left.+\mbb{P}\left(\|\hat{f}_{s_l}-f\|_{w,n}^2 M_l^{-2}(M_l^2 n/\log n)^{2s_l/(2s_l+d)}>c_0^2/4\right)\right).
\end{align}
Since $l<i$, we have $f\in H^{s_i}(\mcal{X},g;M_l)\subset H^{s_{l}}(\mcal{X},g;M_l)$ for all $l<i$. Therefore, it suffices to focus on the concentration inequality of $\hat{f}_{s_l}$ to $f$, i.e., bounding
\begin{align}\label{j<1part1}
    \mbb{P}\left(\|\hat{f}_{s_l}-f\|_{w,n}^2 M_l^{-2}(M_l^2 n/\log n)^{2s_l/(2s_l+d)}>c_0^2/4\right).
\end{align}

Note that the key problem here is the rate of convergence of $\|\hat{f}_{s_j}-f\|_{w,n}^{2}$ in \eqref{sumj<i1} does not match the rate $(n/\log n)^{2s_i/(2s_i+d)}$ given there. However, this can be dealt with by controlling the probability of the event $\mcal{E}_{j}$. The strategy here is we need a better concentration inequality than what has been proven previously as \eqref{markovs>1} otherwise the probability of the event $\mcal{E}_{j}$ will not decay to $0$. Observe that the concentration \eqref{markovs>1}: for $n$ large enough and with probability smaller than $1-2\delta$,
\begin{align*}
    \langle L_{w,n,\epsilon}^{s}f,f\rangle_{w,n}\lesssim \delta^{-1} M^2,
\end{align*}
is from the application of Markov's inequality with
\begin{align*}
    \mbb{E}\langle L_{w,n,\epsilon}^{s}f,f\rangle_{w,n}\lesssim M^2,
\end{align*}
for $n$ large enough. While bounding the first moment gives a concentration inequality with probability $1-2\delta$, establishing a higher moment bound, e.g. the second moment, would result in a better concentration inequality with higher probability similar to \cite[Proposition 1]{green2021minimaxsmoothing}, which fits in our proof technique.

Starting with $s=1$ and similar to \eqref{distononlocal}, we have: for $n$ large enough,
\begin{align}\label{varianceconcs=1}
    &\Var \langle L_{w,n,\epsilon}f,f \rangle_{w,n}\nonumber\\\lesssim& \Var \left(\frac{1}{2}\frac{1}{n^2\epsilon^{d+2}}\sum_{i,j=1}^{n}(g(X_i)^{-r}f(X_i)-g(X_j)^{-r}f(X_j))^2g(X_{i})^{1-p}\frac{\eta\left(\frac{\|X_i-X_j\|}{\epsilon}\right)}{g(X_i)^{1-q/2}g(X_j)^{1-q/2}}\right).
\end{align}
For $i,j\in 1,\ldots,n$, let $$V_{ij}:=(g(X_i)^{-r}f(X_i)-g(X_j)^{-r}f(X_j))^2g(X_{i})^{1-p}\frac{\eta\left(\frac{\|X_i-X_j\|}{\epsilon}\right)}{g(X_i)^{1-q/2}g(X_j)^{1-q/2}}.$$

We have:
\begin{align*}
    &\Var \left(\sum_{i,j=1}^{n}(g(X_i)^{-r}f(X_i)-g(X_j)^{-r}f(X_j))^2g(X_{i})^{1-p}\frac{\eta\left(\frac{\|X_i-X_j\|}{\epsilon}\right)}{g(X_i)^{1-q/2}g(X_j)^{1-q/2}}\right)\\&=\sum_{i,j=1}^{n}\sum_{l,m=1}^{n}\text{Cov}(V_{ij},V_{lm}).
\end{align*}
Now, consider the following four scenarios depending on the cardinality of $\{i,j,l,m\}$.
\begin{itemize}
    \item If $|\{i,j,l,m\}|=4$, since $V_{ij}$ and $V_{lm}$ are independent, we have $\text{Cov}(V_{ij},V_{lm})=0$.
    \item If $|\{i,j,l,m\}|=3$, without loss of generality, say $i=l$, we have by Lipschitz condition,
    \begin{align*}
        \text{Cov}(V_{ij},V_{im})&\le \mbb{E}[V_{ij}V_{im}]\\&\lesssim \epsilon^{2d+4}M^{4}.
    \end{align*}
    \item If $|\{i,j,l,m\}|=2$, without loss of generality, say $i=l$ and $j=m$, similarly, we obtain 
    \begin{align*}
        \text{Cov}(V_{ij},V_{ij})&\le \mbb{E}V_{ij}^2\\&\lesssim \epsilon^{d+4}M^{4}.
    \end{align*}
    \item If $|\{i,j,l,m\}|=1$, we have $V_{ij}=V_{lm}=0$. 
\end{itemize}
Plugging the above results in \eqref{varianceconcs=1}, it yields that for $n$ large enough,
\begin{align*}
    \Var \langle L_{w,n,\epsilon}f,f \rangle_{w,n}\lesssim \frac{1}{4n^{4}\epsilon^{2d+4}}\left(n^{3}\epsilon^{2d+4}M^{4}+n^2\epsilon^{d+4}M^{4}\right)\lesssim n^{-1}M^4,
\end{align*}
where the last step follows by the assumption that $n\epsilon^{d}\ge 1$. Then, by Markov's inequality, we obtain: for any $\delta\in (0,1)$,
\begin{align}\label{markovvariances=1}
    \mbb{P}\left(\left|\langle L_{w,n,\epsilon}f,f \rangle_{w,n}-\mbb{E} \langle L_{w,n,\epsilon}f,f \rangle_{w,n}\right|\ge  \frac{1}{\delta} M^{2}\right)\lesssim \frac{\delta^{2}}{n}.
\end{align}
Combining \eqref{markovvariances=1} and \eqref{markovs=1}, we conclude that for $n$ large enough,
\begin{align*}
    \langle L_{w,n,\epsilon}f,f \rangle_{w,n}\lesssim \frac{1}{\delta}M^2
\end{align*}
holds with probability not less than $1-\frac{\delta}{n^2}$. Furthermore, following a similar argument in Lemma \ref{lemma62}, one can show the above high-probability bound also holds for the case $s>1$. Thus, under the additional Lipschitz assumption that $|f_{g}(x)-f_{g}(x')|\le M\|x-x'\|$, we establish a better bound for the empirical weighted Sobolev seminorm: for all $s\in\mbb{N}_{+}$ and $n$ large enough,
\begin{align*}
    \langle L_{w,n,\epsilon}^s f,f \rangle_{w,n}\lesssim \frac{1}{\delta}M^2,
\end{align*}
with probability at least $1-C\frac{\delta^2}{n}$.

Conditional on the event that the sample points $X_1,\ldots,X_n$ satisfy \eqref{biasvariancedecomp} and \eqref{eigenboundreadytouse} with $K=\lfloor M^2n \rfloor^{d/(2s+d)}$, following the proof of Theorem \ref{firstorder} to obtain \eqref{finallyminimaxopt} by using the better concentration inequality we derived above instead, we have for $n$ large enough,
\begin{align*}
    \|\hat{f}-f\|_{w,n}^{2}\lesssim \frac{1}{\delta}M^2(M^2n)^{-2s/(2s+d)},
\end{align*}
with probability at least $1-C\delta^2 n^{-1}-Cne^{-Cn\epsilon^{d}}-e^{-\lfloor M^2n \rfloor^{d/(2s+d)}}$ under the minimax optimal setting for $M$.

Now, returning to our mission \eqref{j<1part1}, by setting $\delta^{-1}=c_{0}^2/4\cdot \log^{2s_l/(2s_l+d)} n$, we have:
\begin{align*}
    &\mbb{P}\left(\|\hat{f}_{s_l}-f\|_{w,n}^2 M_l^{-2}(M_l^2 n/\log n)^{2s_l/(2s_l+d)}>c_0^2/4\right)\\&\le 16C c_0^{-4} n^{-1}\log^{-2s_l/(2s_l+d)} n+Cne^{-Cn\epsilon^{d}}+e^{-\lfloor M_{\min}^2 n \rfloor^{d/(2s+d)}}.
\end{align*}
With \eqref{PmalE}, we obtain:
\begin{align*}
    \mbb{P}(\mcal{E}_{j})\le 16C c_0^{-4} n^{-1}\log^{1-2s_{\min}/(2s_{\min}+d)} n+Cn e^{-Cn\epsilon^{d}}\log n +e^{-\lfloor M_{\min}^2 n \rfloor^{d/(2s+d)}}\log n .
\end{align*}
Combining the above result with \eqref{sumj<i1} and noting that on $\mcal{E}_{j}^{c}$, $\mbf{1}_{\mcal{E}_{j}}=0$, it yields that
\begin{align*}
    \sum_{j=1}^{i-1}\left(\|\hat{f}_{s_j}-f\|_{w,n}^{2}M_i^{-2}(M_i^2 n/\log n )^{2s_i/(2s_i+d)}\mbf{1}_{\mcal{E}_{j}}\right)\lesssim \frac{1}{\delta},
\end{align*}
with probability at least $$1-\delta \log^{-2s_i/(2s_i+d)}n-16C c_0^{-4} n^{-1}\log^{2-2s_{\min}/(2s_{\min}+d)} n-Cn e^{-Cn\epsilon^{d}}\log^2 n -e^{-\lfloor M_{\min}^2 n \rfloor^{d/(2s+d)}}\log^2 n.$$

\end{proof}

\section{Conclusion}

In this work, we provide adaptive and non-adaptive rates of convergence, in Theorem~\ref{firstorder} and~\ref{Lepski} respectively, for estimating a true regression function lying belonging to the Sobolev space. Our estimators are based on performing principal components regression based on the eigenvectors of the weighted graph Laplacian matrix, and using Lepski's method for deriving the adaptive results. Our contributions expand upon the non-adaptive outcome outlined in~\cite{green2021minimax}, which was originally established for a particular normalized graph Laplacian. This extension encompasses a broad spectrum of weighted Laplacian matrices commonly employed in practical applications, including the unnormalized Laplacian and the random walk Laplacian among them.

Future works include (i) relaxing the assumption that the density $g$ is bounded from below, (ii) developing confidence intervals for the estimators by establishing asymptotic normality results and developing related bootstrap procedures, and (iii) developing estimators that are instance-optimal in the sense of~\cite{hoffman2002random}, i.e., estimators that achieve the best possible rate for a given combination of the true regression function $f$ and the sampling density $g$ by adaptively picking the parameters $p,q$ and $r$ in the weighted graph Laplacian matrix.

\subsection*{Acknowledgement.} We gratefully acknowledge support for this project from the National Science Foundation via grant NSF-DMS-2053918.

\bibliographystyle{alpha}
\bibliography{example}

\appendix

\section{Auxiliary results}

In the subsequent two sections, we introduce some important properties of KDE and eigenvalues of the weighted Laplacian matrices $L_{w,n,\epsilon}$ and the weighted Laplacian operators $\mcal{L}_{w}$ used in the previous proof respectively.

\subsection{Property of kernel density estimation}\label{kdesectionpf}
Consider a Kernel density estimator (KDE) on $\mcal{X}$:
\begin{align*}
    g_{n}(x):=\frac{1}{n\epsilon^{d}}\sum_{j=1}^{n}\eta\left(\frac{\|x-X_j\|}{\epsilon}\right),
\end{align*}
where $\eta$ is a kernel function. 

In \cite{gine2002rates}, it has been proven that the above KDE satisfies the following almost sure convergence:
\begin{align*}
    \|g_n(x)-\mbb{E}g_{n}(x)\|_{\infty}=O_{a.s.}\left(\sqrt{\frac{|\log \epsilon|}{n\epsilon^{d}}}\right),
\end{align*}
given the assumption that the kernel $\eta$ satisfies the kernel VC-type condition \ref{a4} and see Remark \ref{kernelvc} for more details. 

As for the bias, it is well-known that there exists a boundary effect on KDE due to the fact that (with probability 1) all the samples lie in the support of the density. However, when we are far enough away from the boundary such that $B_{x}(\epsilon)\subset \mcal{X}$, we have
\begin{align*}
    \left|\mbb{E}g_{n}(x)-g(x)\right|&=\left|\int_{\mcal{X}}\frac{1}{\epsilon^d}\eta\left(\frac{\|x-y\|}{\epsilon}\right)g(y)dy-g(x)\right|\\&\le \int_{\|z\|\le 1}\eta(\|z\|)|g(x+\epsilon z)-g(x)|dz\\&\lesssim \epsilon \int_{\mbb{R}^{d}}\|z\|\eta(\|z\|)dz\lesssim \epsilon,
\end{align*}
where the last step is by the assumption that $g$ is Lipschitz. As a result, for such values of $x$,
\begin{align*}
    \|g_n(x)-g(x)\|_{\infty}=O_{a.s.}\left(\sqrt{\frac{|\log \epsilon|}{n\epsilon^{d}}}+\epsilon\right).
\end{align*}

When $x$ is near the boundary, i.e., $B_{x}(\epsilon)\not\subset \mcal{X}$, we have $X_i \in B_x(\epsilon)$ with probability less than $C\epsilon$ for some constant $C>0$.
Then:
\begin{align*}
    \mbb{E}g_{n}(x)\le g_{\max}\int_{\|z\|\le 1}\eta(\|z\|)dz<\infty,
\end{align*}
and
\begin{align*}
    \mbb{E}g_{n}(x)= \int_{\{\|z\|\le 1\}\cap\{x+\epsilon z\in\mcal{X}\} }\eta(\|z\|)g(x+\epsilon z)dz\ge g_{\min}\int_{\{\|z\|\le 1\}\cap\{x+\epsilon z\in\mcal{X}\} }\eta(\|z\|)dz>0,
\end{align*}
under the assumption \ref{a1} on $\mcal{X}$. Therefore, we have for all $x\in\mcal{X}$, $g_{n}(x)$ is bounded from above and below a.s. for $n$ large enough. 
 
By conditioning on $X_i$ and the law of total probability, we have for all $i\in[n]$ and $B_{\epsilon}(X_i)\in\mcal{X}$,
\begin{align*}
    \Delta^{-}(n,\epsilon,\eta,g)\le g_{n}(X_i)-g(X_{i})\le \Delta^{+}(n,\epsilon,\eta,g),
\end{align*}
almost surely with 
\begin{align*}
    \Delta^{-}(n,\epsilon,\eta,g)&:=-\frac{1}{n}g_{\max}+\frac{\eta(0)}{n\epsilon^{d}}-\frac{n-1}{n}\Delta(n,\epsilon),\\
    \Delta^{+}(n,\epsilon,\eta,g)&:=-\frac{1}{n}g_{\min}+\frac{\eta(0)}{n\epsilon^{d}}+\frac{n-1}{n}\Delta(n,\epsilon),
\end{align*}
and
\begin{align*}
    \Delta(n,\epsilon):=\sqrt{\frac{|\log \epsilon|}{n\epsilon^{d}}}+\epsilon.
\end{align*}

Since we are seeking a high-probability bound in Theorems \ref{firstorder}, it is not necessarily required to have an exact estimation near the boundary, which happens with probability of the order $\epsilon$. However, various approaches including data reflection, transformations, boundary kernels and local likelihood, have been proposed for boundary correction.

\subsection{Property of eigenvalues}
In this section, we focus on introducing some results on the eigenvalues of the weighted Laplacian $L_{n,w,\epsilon}$ and the weighted Laplacian operator $\mcal{L}_{w}$ based on analysis in \cite{calder2022improved,green2021minimaxsmoothing}.

\subsubsection{\capitalisewords{Transportation distance between measures}}
For a probability measure $G$ defined on $\mcal{X}$ and a map $T:\mcal{X}\rightarrow \mcal{X}$, denote by $T_{\sharp G}$ the push-forward of $G$ by $T$, i.e., the measure such that for any Borel subset $U\subseteq \mcal{X}$, it holds that
\begin{align*}
    T_{\sharp G}(U):=G(T^{-1}(U)).
\end{align*}
When $T_{\sharp G}$ is taken as the empirical measure of $G$ denoted by $G_{n}$, $T$ is called the transportation map between $G$ and $G_{n}$ and we define the $\infty$-transportation distance between $G$ and $G_{n}$ as 
\begin{align}\label{ot}
    d_{\infty}(G,G_{n}):=\underset{T:T_{\sharp G}=G_{n}}{\text{inf}}\|T-\text{Id}\|_{L^{\infty}(G)},
\end{align}
where $\text{Id}$ is the identity mapping. We denote by $\tilde{T}$ the optimal $\infty$-optimal transport map ($\infty$-OT map) between $G$ and $G_{n}$, i.e,, the map that achieves the infimum \eqref{ot}.

Now, following \cite{green2021minimaxsmoothing}, let
\begin{align*}
    \tilde{\delta}&:=\max\{n^{-1/d},C\epsilon\},
\end{align*}
where $C>0$ is some constant not depending on $n$ and we also let $\theta>0$ be some constant not depending on $n$. We present the following result from \cite{green2021minimaxsmoothing}.

\begin{Proposition}[cf. Proposition 3 of \cite{green2021minimaxsmoothing}]\label{transportdensity}
Under the assumptions \ref{a1} and \ref{a2}, with probability greater than $1-Cne^{-Cn\theta^{2} \tilde{\delta}^{d}}$, there exists a probability measure $\tilde{G}_{n}$ with density $\tilde{g}_{n}$ such that
\begin{align*}
    d_{\infty}(G_{n},\tilde{G}_{n})\le C\tilde{\delta},
\end{align*}
and such that
\begin{align*}
    \|g-\tilde{g}_{n}\|_{\infty}\le C(\theta+\tilde{\delta}),
\end{align*}
where $C>0$ is some constant not depending on $n$.
\end{Proposition}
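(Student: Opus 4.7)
The plan is to construct $\tilde{G}_n$ explicitly as a piecewise-constant regularization of the empirical measure $G_n$ over a partition of $\mathcal{X}$ into cells of diameter $\tilde{\delta}$, and then to verify the two bounds separately: the transport bound follows because $G_n$ and $\tilde{G}_n$ assign identical mass to each cell, so mass need only be rearranged within cells at distance $\lesssim \tilde{\delta}$; the sup-norm bound on the density follows from Bernstein concentration of the empirical counts combined with Lipschitz continuity of $g$.

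Concretely, I would partition $\mathcal{X}$ into Borel cells $\{C_k\}_{k=1}^N$ of diameter $\asymp \tilde{\delta}$ and volume $|C_k| \asymp \tilde{\delta}^d$, with $N \asymp \tilde{\delta}^{-d}$. Let $n_k := \#\{i : X_i \in C_k\}$, set $\tilde{g}_n(x) := n_k/(n|C_k|)$ for $x \in C_k$, and let $\tilde{G}_n$ be the associated probability measure (it is a probability measure since $\sum_k n_k = n$). Because $\tilde{G}_n(C_k) = n_k/n = G_n(C_k)$, one can partition each $C_k$ into $n_k$ measurable subregions of equal $\tilde{G}_n$-mass $1/n$ and define a map that sends each subregion to a distinct atom $X_i \in C_k$; the resulting transport connects $\tilde{G}_n$ with $G_n$ and moves mass by at most $\operatorname{diam}(C_k) \lesssim \tilde{\delta}$, giving the required $\infty$-transport bound. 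For the density estimate, decompose for $x \in C_k$
\begin{equation*}
\tilde{g}_n(x) - g(x) \;=\; \frac{n_k/n - G(C_k)}{|C_k|} \;+\; \left(\frac{G(C_k)}{|C_k|} - g(x)\right);
\end{equation*}
the Lipschitz continuity of $g$ from Assumption~\ref{a2} bounds the second summand by $L_g \tilde{\delta}$, and Bernstein's inequality applied to the Binomial count $n_k$ (with variance proxy $G(C_k) \lesssim g_{\max}\tilde{\delta}^d$) yields, for $t \asymp n\theta^{2} \tilde{\delta}^{d}$, that the first summand is $\lesssim \theta$ with probability at least $1 - 2\exp(-cn\theta^{2}\tilde{\delta}^{d})$. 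A union bound over the $N \asymp \tilde{\delta}^{-d} \lesssim n$ cells (using $\tilde{\delta} \geq n^{-1/d}$) then produces the stated failure probability $\lesssim n \exp(-c n \theta^{2} \tilde{\delta}^{d})$.

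The main technical obstacle is the treatment of cells adjacent to $\partial \mathcal{X}$: because $\mathcal{X}$ is only assumed to have Lipschitz boundary (Assumption~\ref{a1}), a naive intersection of a regular grid with $\mathcal{X}$ produces boundary cells of irregular shape and potentially very small volume, which would inflate the fluctuation term $(n_k/n - G(C_k))/|C_k|$ and also violate the Lipschitz control of $g$ over the cell. The standard workaround is to merge each thin boundary cell with a neighboring interior cell so that every cell retains volume $\asymp \tilde{\delta}^{d}$ while its diameter stays $\lesssim \tilde{\delta}$; the Lipschitz regularity of $\partial \mathcal{X}$ is precisely what guarantees that such a bounded-distortion remeshing exists with at most $\lesssim \tilde{\delta}^{-d}$ cells. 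With this adjustment, both estimates above go through with $n$-independent constants, establishing Proposition~\ref{transportdensity}.
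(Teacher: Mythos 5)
Your construction is correct in substance, but it is worth noting how it relates to the paper: the paper does not prove Proposition~\ref{transportdensity} at all --- it imports it verbatim (``cf.\ Proposition 3 of \cite{green2021minimaxsmoothing}''), and that reference in turn rests on the transportation/matching arguments of Calder--Garc\'ia Trillos and Garc\'ia Trillos--Slep\v{c}ev. Your histogram-plus-within-cell-transport argument is essentially a self-contained reconstruction of that underlying proof: partition into cells of diameter $\asymp\tilde\delta$, set $\tilde g_n$ equal to the normalized cell counts, transport within cells to get $d_\infty(G_n,\tilde G_n)\lesssim\tilde\delta$, and control $\|\tilde g_n-g\|_\infty$ by Bernstein plus a union bound over the $\lesssim\tilde\delta^{-d}\le n$ cells, which reproduces the stated failure probability $Cne^{-cn\theta^2\tilde\delta^d}$. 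Three small points deserve attention. First, you correctly build the map in the only direction that can exist, pushing the absolutely continuous $\tilde G_n$ onto the atomic $G_n$ (no map can push atoms onto a density); since the paper's displayed definition of $d_\infty$ transports the first argument onto the second, you should state explicitly that this is the intended convention here. Second, your Bernstein step gives exponent $\asymp n\theta^2\tilde\delta^d$ only when $\theta$ is bounded above by a constant; that is harmless because $\theta$ is a fixed constant in the paper's setup, but say so. Third, the merging of thin boundary cells is the one place where the Lipschitz boundary assumption \ref{a1} does real work, and you only sketch it; for a complete proof you would either carry out the merging argument (every boundary cell can be attached to an interior neighbor at distance $\lesssim\tilde\delta$, preserving volume $\asymp\tilde\delta^d$ and diameter $\lesssim\tilde\delta$) or, as in the cited literature, work in boundary-flattening bi-Lipschitz charts. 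With those details filled in, your argument is a valid alternative to citing \cite{green2021minimaxsmoothing}, at the cost of redoing what that reference already provides.
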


\subsubsection{\capitalisewords{Discretization and interpolation maps}}
The key procedure adopted in \cite{calder2022improved} is to construct two maps: a discretization map $\tilde{\mcal{P}}:L^{2}(G)\rightarrow L^{2}(\tilde{G}_{n})$ and an interpolation map $\tilde{\mcal{I}}:L^{2}(\tilde{G}_{n})\rightarrow L^{2}(G)$, that are "almost" isometries.

For $X_{i}, i=1,\ldots,n$, define
\begin{align*}
    \tilde{U}_{i}:=\tilde{T}^{-1}(\{X_{i}\}).
\end{align*}
Then, we define the contractive discretization map $\tilde{\mcal{P}}:L^{2}(G)\rightarrow L^{2}(\tilde{G}_{n})$ by
\begin{align*}
    (\tilde{\mcal{P}}f)(X_i):=n\cdot \int_{\tilde{U}_{i}}f(x)\tilde{g}_{n}(x)dx.
\end{align*}
Moreover, the interpolation map $\tilde{\mcal{I}}:L^{2}(\tilde{G}_{n})\rightarrow L^{2}(G)$ is given by
\begin{align*}
    \tilde{\mcal{I}}u:=\Lambda_{\epsilon-2\tilde{\delta}}(\tilde{\mcal{P}}^{*}u).
\end{align*}
Here, $\tilde{\mcal{P}}^{*}=u\circ \tilde{T}$ is the adjoint of $\tilde{\mcal{P}}_{n}$, i.e., 
\begin{align*}
    (\tilde{\mcal{P}}^{*}u)(x)=\sum_{j=1}^{n}u(x_i)\mbf{1}_{x\in U_{i}},
\end{align*}
and $\Lambda_{\epsilon-2\tilde{\delta}}$ is a kernel smoothing operator with respect to a kernel $K$ (defined below) with the bandwidth $\epsilon-2\tilde{\delta}$. The kernel $K$ is defined by
\begin{align*}
    K(x,y):=\frac{1}{\epsilon^{d}}\zeta\left(\frac{\|x-y\|}{\epsilon}\right),
\end{align*}
where 
\begin{align*}
    \zeta(t):=\frac{1}{\sigma_1}\int_{t}^{\infty}\eta(s)sds.
\end{align*}
Then, define the operator $\Lambda_{h}$, for $h>0$, by
\begin{align*}
    \Lambda_{h}f(x):=\frac{1}{\tau(x)}\int_{\mcal{X}}K(x,y)f(y)g(y)dy,
\end{align*}
where $\tau(x):=\int_{\mcal{X}}K(x,y)g(y)dy$ is a normalization factor.

Furthermore, we define the Dirichlet energies:
\begin{align*}
    b_{w,\epsilon}(u):=\langle L_{w,n,\epsilon}u,u\rangle_{g^{p-r}},
\end{align*}
and 
\begin{align*}
    D_{w}(f):=
    \left\{
    \begin{aligned}
        &\int_{\mcal{X}}\|\nabla f_{g}(x)\|^{2}g(x)^{q}dx \qquad \text{if $f\in H^{1}(\mcal{X},g)$},\\
        &\infty\qquad\qquad\qquad\qquad\qquad \text{o.w.}
    \end{aligned}
    \right.
\end{align*}
Clearly, when $w=(p,q,r)=(1,2,0)$, the above Dirichlet energies become the ones associated with the unnormalized Laplacian, i.e., $w=(p,q,r)=(1,2,0)$:
\begin{align*}
    b_{\epsilon}(u):=\langle (\tilde{D}-\tilde{W})u,u\rangle,
\end{align*}
and 
\begin{align*}
    D_{2}(f):=
    \left\{
    \begin{aligned}
        &\int_{\mcal{X}}\|\nabla f(x)\|^{2}g(x)^{2}dx \qquad \text{if $f\in H^{1}(\mcal{X})$},\\
        &\infty\qquad\qquad\qquad\qquad\qquad \text{o.w.}
    \end{aligned}
    \right.
\end{align*}

The following two propositions from \cite{green2021minimaxsmoothing}, whose proof is based on Proposition \ref{transportdensity}, shows the fact that discretization map $\tilde{\mcal{P}}$ and interpolation map $\tilde{\mcal{I}}$ are almost isometries.

\begin{Proposition}[cf. Proposition 4 of \cite{green2021minimaxsmoothing}]\label{key1}
    With probability at least $1-C ne^{-C n \theta^{2} \tilde{\delta}^{d}}$, we have for any $f\in L^{2}(\mcal{X})$,
    \begin{align*}
        b_{\epsilon}(\tilde{\mcal{P}}f)\le C(1+C(\theta+\tilde{\delta}))\left(1+C \frac{\tilde{\delta}}{\epsilon}\right)\sigma_{1}\cdot D_{2}(f),
    \end{align*}
    and for any $u\in L^{2}(G_{n})$,
    \begin{align*}
        \sigma_1 D_{2}(\tilde{\mcal{I}}u)\le C(1+C(\theta+\tilde{\delta}))\left(1+C \frac{\tilde{\delta}}{\epsilon}\right)\cdot b_{\epsilon}(u).
    \end{align*}   
\end{Proposition}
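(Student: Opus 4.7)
The plan is to transfer estimates between the continuum energy $D_2(\cdot)$ and the graph energy $b_\epsilon(\cdot)$ through the $\infty$-OT coupling supplied by Proposition~\ref{transportdensity}. The key geometric input is that for any indices $i,j$ the sets $\tilde U_i, \tilde U_j$ are at Hausdorff distance at most $\tilde\delta$ from $\{X_i\}$ and $\{X_j\}$ respectively; consequently, for $x\in\tilde U_i$ and $y\in\tilde U_j$, one has $\|X_i-X_j\|\le \|x-y\|+2\tilde\delta$ and $\|x-y\|\le \|X_i-X_j\|+2\tilde\delta$. Combined with the density estimate $\|g-\tilde g_n\|_\infty \le C(\theta+\tilde\delta)$, these two observations provide the machinery to exchange sums over $(X_i,X_j)$ with integrals over $\mcal X\times\mcal X$ at the cost of a multiplicative factor $(1+C(\theta+\tilde\delta))(1+C\tilde\delta/\epsilon)$.

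For the first inequality, I would unfold $b_\epsilon(\tilde{\mcal P}f)=\frac{1}{n\epsilon^{d+2}}\sum_{i,j}\eta(\|X_i-X_j\|/\epsilon)|(\tilde{\mcal P}f)(X_i)-(\tilde{\mcal P}f)(X_j)|^2$. Using the averaging definition $(\tilde{\mcal P}f)(X_i)=n\int_{\tilde U_i}f\,\tilde g_n$, Jensen's inequality converts each squared difference into a double integral of $|f(x)-f(y)|^2$ over $\tilde U_i\times\tilde U_j$ against $\tilde g_n(x)\tilde g_n(y)$. The fundamental theorem of calculus together with Cauchy--Schwarz rewrites $|f(x)-f(y)|^2$ as $\|x-y\|\int_0^1\|\nabla f(x+t(y-x))\|^2\,dt\cdot\|x-y\|$. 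Swapping sum and integral turns the expression into a continuum integral of $\eta(\|X_i-X_j\|/\epsilon)$ over $\mcal X\times\mcal X$; the geometric estimate above replaces $\|X_i-X_j\|$ by $\|x-y\|$ inside the kernel at the cost of enlarging its support to radius $\epsilon+2\tilde\delta$, which produces the factor $(1+C\tilde\delta/\epsilon)^{d+2}$. Replacing $\tilde g_n$ by $g$ gives $(1+C(\theta+\tilde\delta))$, and a standard change of variables $z=(y-x)/\epsilon$ together with $\sigma_1$-rotation invariance (as in~\eqref{s1decomppart}) identifies the resulting object as $\sigma_1 D_2(f)$ plus admissible errors.

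The second inequality is handled by working with the explicit form $\tilde{\mcal I}u=\Lambda_{\epsilon-2\tilde\delta}(\tilde{\mcal P}^* u)$, where $\tilde{\mcal P}^* u$ is piecewise constant equal to $u(X_i)$ on $\tilde U_i$. Differentiating under the integral sign and using that $\int \nabla_x K(x,y)g(y)\,dy$ contributes a symmetric drift $\nabla\tau/\tau$, one can rewrite $\nabla \tilde{\mcal I}u(x)$ as $\frac{1}{\tau(x)}\int \nabla_x K(x,y)\bigl((\tilde{\mcal P}^*u)(y)-(\tilde{\mcal P}^*u)(x)\bigr)g(y)\,dy$. Squaring, integrating against $g^2$, and applying Cauchy--Schwarz yields an upper bound in terms of $\int\int \tilde K(x,y)|(\tilde{\mcal P}^*u)(y)-(\tilde{\mcal P}^*u)(x)|^2g(x)g(y)\,dx\,dy$ for a kernel $\tilde K$ at bandwidth $\epsilon-2\tilde\delta$. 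The choice of bandwidth $\epsilon-2\tilde\delta$ is precisely what ensures the $\infty$-OT inequality sends every pair $(x,y)$ in the support of $\tilde K$ to a graph pair $(X_i,X_j)$ with $\|X_i-X_j\|\le\epsilon$, so that on substituting $(\tilde{\mcal P}^*u)(x)=u(X_i)$ and passing back to a sum we recover $b_\epsilon(u)$ with the required multiplicative constants.

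The main obstacle is the careful bookkeeping of the two error mechanisms simultaneously: the bandwidth mismatch (handled by shrinking to $\epsilon-2\tilde\delta$ in one direction and enlarging to $\epsilon+2\tilde\delta$ in the other, producing the $(1+C\tilde\delta/\epsilon)$ factor) and the density discrepancy between $g$ and $\tilde g_n$ (producing the $(1+C(\theta+\tilde\delta))$ factor). Both must be compatible across the quadratic form structure, and in particular one must be careful in the Jensen/averaging steps that the averaging weights are taken with respect to the correct measure ($\tilde g_n$ versus $g$). All of this is conditional on the event of Proposition~\ref{transportdensity}, which accounts for the stated probability $1-Cne^{-Cn\theta^2\tilde\delta^d}$.
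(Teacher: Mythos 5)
Your sketch is essentially the argument this paper relies on: Proposition \ref{key1} is not reproved here but imported from Proposition 4 of \cite{green2021minimaxsmoothing}, whose proof is precisely the transport-map argument you outline (Jensen's inequality over the cells $\tilde U_i$ together with the $\infty$-OT coupling of Proposition \ref{transportdensity} for the discretization bound, and the gradient-of-the-smoothed-kernel identity at bandwidth $\epsilon-2\tilde\delta$, using that $\nabla_x K$ is controlled by $\eta$ through the definition of $\zeta$, for the interpolation bound). Hence your proposal is correct and takes the same route as the cited proof, up to routine bookkeeping such as absorbing the $(1+C\tilde\delta/\epsilon)^{d+2}$-type factors into $(1+C\tilde\delta/\epsilon)$.
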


\begin{Proposition}[cf. Proposition 5 of \cite{green2021minimaxsmoothing}]\label{key2}
    With probability at least $1-C ne^{-C n \theta^{2} \tilde{\delta}^{d}}$, we have for any $f\in L^{2}(\mcal{X})$,
    \begin{align*}
        \bigg|\|f\|_{L^{2}(G)}^2-\|\tilde{\mcal{P}}f\|_{L^{2}(G_{n})}^{2}\bigg|\le C\tilde{\delta}\|f\|_{L^{2}(G)}\sqrt{D_{2}(f)}+C(\theta+\tilde{\delta})\|f\|_{L^{2}(G)}^{2},
    \end{align*}
    and for any $u\in L^{2}(G_{n})$,
    \begin{align*}
        \bigg|\|u\|_{L^{2}(G_{n})}^2-\|\tilde{\mcal{I}}u\|_{L^{2}(G)}^{2}\bigg|\le C\epsilon\|u\|_{L^{2}(G_{n})}\sqrt{b_{\epsilon}(u)}+C(\theta+\tilde{\delta})\|u\|_{L^{2}(G_{n})}^{2}.
    \end{align*}  
\end{Proposition}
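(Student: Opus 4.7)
\medskip

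\noindent\textbf{Proposal for Proposition \ref{key2}.} The plan is to prove the two bounds separately, using the same bridging strategy in both: pass between $L^2(G_n)$ and $L^2(G)$ by way of the smoothed empirical measure $\tilde G_n$ supplied by Proposition \ref{transportdensity}, and absorb the discretization/interpolation error into a Poincar\'e-type or smoothing-type estimate that can be charged against $D_2(f)$ or $b_\epsilon(u)$. Throughout, I work on the event of probability at least $1-Cne^{-Cn\theta^2\tilde\delta^d}$ on which the transport map $\tilde T$ satisfies $\|\tilde T-\mathrm{Id}\|_\infty\le C\tilde\delta$ and $\|g-\tilde g_n\|_\infty\le C(\theta+\tilde\delta)$.

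\medskip

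\noindent\emph{First bound: $\tilde{\mcal P}$ is an almost isometry.} Let $\bar f_i:=(\tilde{\mcal P}f)(X_i)=n\int_{\tilde U_i}f\,\tilde g_n$. Because $\tilde T_\sharp \tilde G_n=G_n$ gives $\int_{\tilde U_i}\tilde g_n=1/n$, I will first compute
\[
\|\tilde{\mcal P}f\|_{L^2(G_n)}^2=\frac{1}{n}\sum_i \bar f_i^2=\sum_i \bar f_i^2\int_{\tilde U_i}\tilde g_n\,dx=\|\tilde{\mcal P}^{*}\tilde{\mcal P}f\|_{L^2(\tilde G_n)}^2,
\]
so that $\tilde{\mcal P}^{*}\tilde{\mcal P}f$ is the piecewise-constant projection of $f$ onto the partition $\{\tilde U_i\}$. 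Next I split
\[
\|f\|_{L^2(G)}^2-\|\tilde{\mcal P}f\|_{L^2(G_n)}^2=\bigl(\|f\|_{L^2(G)}^2-\|f\|_{L^2(\tilde G_n)}^2\bigr)+\bigl(\|f\|_{L^2(\tilde G_n)}^2-\|\tilde{\mcal P}^{*}\tilde{\mcal P}f\|_{L^2(\tilde G_n)}^2\bigr).
\]
The first term is bounded by $\|g-\tilde g_n\|_\infty\,\|f\|_{L^2}^2/g_{\min}\le C(\theta+\tilde\delta)\|f\|_{L^2(G)}^2$ via Proposition \ref{transportdensity}. For the second, I factor $f^2-(\tilde{\mcal P}^*\tilde{\mcal P}f)^2=(f-\tilde{\mcal P}^*\tilde{\mcal P}f)(f+\tilde{\mcal P}^*\tilde{\mcal P}f)$ and apply Cauchy--Schwarz, noting that $\|f+\tilde{\mcal P}^{*}\tilde{\mcal P}f\|_{L^2(\tilde G_n)}\le 2\|f\|_{L^2(\tilde G_n)}$ since $\tilde{\mcal P}^{*}\tilde{\mcal P}$ is a contraction. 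The remaining factor $\|f-\tilde{\mcal P}^{*}\tilde{\mcal P}f\|_{L^2(\tilde G_n)}^2=\sum_i\int_{\tilde U_i}(f-\bar f_i)^2\tilde g_n$ is estimated by a Poincar\'e-type argument on each cell: since $\tilde U_i\subset B_{X_i}(C\tilde\delta)$, writing $f(x)-f(y)=\int_0^1\nabla f(y+t(x-y))^T(x-y)\,dt$ and using Cauchy--Schwarz in $t$ gives $\int_{\tilde U_i}(f-\bar f_i)^2\tilde g_n\lesssim\tilde\delta^2\int_{\Omega_i}|\nabla f|^2 g$ on a slight thickening $\Omega_i$ of $\tilde U_i$; summing yields $\tilde\delta^2 D_2(f)/g_{\min}^2$ (up to a factor from $\|g_n/g\|_\infty$). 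Pulling this through gives the claimed $C\tilde\delta\|f\|_{L^2(G)}\sqrt{D_2(f)}$ contribution.

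\medskip

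\noindent\emph{Second bound: $\tilde{\mcal I}$ is an almost isometry.} Here the key observation is that $\tilde{\mcal P}^{*}:L^2(G_n)\to L^2(\tilde G_n)$ is an exact isometry, because
\[
\|\tilde{\mcal P}^{*}u\|_{L^2(\tilde G_n)}^2=\sum_i u(X_i)^2\int_{\tilde U_i}\tilde g_n=\frac{1}{n}\sum_i u(X_i)^2=\|u\|_{L^2(G_n)}^2.
\]
Thus I only need to compare $\|\tilde{\mcal I}u\|_{L^2(G)}^2=\|\Lambda_{\epsilon-2\tilde\delta}(\tilde{\mcal P}^{*}u)\|_{L^2(G)}^2$ to $\|\tilde{\mcal P}^{*}u\|_{L^2(\tilde G_n)}^2$. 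I introduce $\|\tilde{\mcal P}^{*}u\|_{L^2(G)}^2$ as an intermediate quantity and treat the two resulting differences in turn: the change of measure piece is again handled by $\|g-\tilde g_n\|_\infty$, contributing $C(\theta+\tilde\delta)\|u\|_{L^2(G_n)}^2$. For the smoothing piece I factor $(\tilde{\mcal I}u)^2-(\tilde{\mcal P}^{*}u)^2$ as above and bound the cross-factor by $2\|\tilde{\mcal P}^{*}u\|_{L^2(G)}$. The remaining quantity $\|\tilde{\mcal I}u-\tilde{\mcal P}^{*}u\|_{L^2(G)}^2$ equals
\[
\int_{\mcal X}\Bigl(\frac{1}{\tau(x)}\int K_{\epsilon-2\tilde\delta}(x,y)\bigl(\tilde{\mcal P}^{*}u(y)-\tilde{\mcal P}^{*}u(x)\bigr)g(y)\,dy\Bigr)^2 g(x)\,dx,
\]
and by Cauchy--Schwarz inside the outer integral this is bounded by a double integral of $(u(X_{j(y)})-u(X_{i(x)}))^2$ against a kernel supported on $\|x-y\|\le \epsilon-2\tilde\delta$. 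The bandwidth reduction $-2\tilde\delta$ combined with $\|y-X_{j(y)}\|,\|x-X_{i(x)}\|\le\tilde\delta$ forces $\|X_{i(x)}-X_{j(y)}\|\le \epsilon$, so that after a change of variables and integration against $g$ the expression is majorized, up to absolute constants, by $\epsilon^2\, b_\epsilon(u)$. This yields the advertised $C\epsilon\|u\|_{L^2(G_n)}\sqrt{b_\epsilon(u)}$ term.

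\medskip

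\noindent\emph{Main obstacles.} The two technical pressure points are (i) making the Poincar\'e-type inequality on the transport cells $\tilde U_i$ rigorous, since these cells are not in general convex or even star-shaped, and (ii) the careful bookkeeping in the smoothing step, where the bandwidth reduction by $2\tilde\delta$ has to exactly compensate the displacement budget of $\tilde T$ so that every kernel interaction $K_{\epsilon-2\tilde\delta}(x,y)$ can be matched with an edge weight $\eta(\|X_i-X_j\|/\epsilon)$ appearing in $b_\epsilon(u)$. For (i) the fix is to enlarge each $\tilde U_i$ to the ball $B_{X_i}(C\tilde\delta)$, pay a harmless constant from the bounded overlap of these balls (a consequence of $d_\infty(G_n,\tilde G_n)\le C\tilde\delta$ and $g_{\min}>0$), and then use the straight-line Poincar\'e estimate. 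Boundary effects near $\partial\mcal X$ contribute at most lower-order terms once the Lipschitz-boundary Assumption \ref{a1} is invoked. With these steps in place, collecting the pieces gives both inequalities with the stated constants on the stated high-probability event.
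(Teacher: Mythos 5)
You should first be aware that the paper does not prove Proposition \ref{key2} at all: it is imported verbatim (``cf.\ Proposition 5 of \cite{green2021minimaxsmoothing}'') and used as a black box, so there is no in-paper argument to compare against. Your reconstruction does follow the architecture of the proof in that reference — pass through the intermediate measure $\tilde{G}_n$, handle the change of measure with $\|g-\tilde{g}_n\|_\infty\le C(\theta+\tilde{\delta})$, identify $\tilde{\mcal{P}}^{*}\tilde{\mcal{P}}f$ as the $L^2(\tilde G_n)$-conditional expectation onto the partition $\{\tilde U_i\}$, use the exact isometry of $\tilde{\mcal{P}}^{*}$, and control the smoothing error of $\Lambda_{\epsilon-2\tilde\delta}$ by matching kernel interactions to graph edges via the displacement budget of $\tilde T$. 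Those structural points are all correct.

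There is, however, one concrete gap in your repair of the cell-wise Poincar\'e step. You propose to enlarge each transport cell $\tilde U_i$ to the ball $B_{X_i}(C\tilde\delta)$ and then ``pay a harmless constant from the bounded overlap of these balls.'' The overlap is not bounded: a generic point $z$ lies in $B_{X_i}(C\tilde\delta)$ for roughly $n\,g(z)\,\tilde{\delta}^{d}$ indices $i$, and since $\tilde{\delta}=\max\{n^{-1/d},C\epsilon\}$ is of order $\epsilon$ in the regime of interest ($\epsilon\gtrsim(\log n/n)^{1/d}$), this multiplicity is of order $n\epsilon^{d}\to\infty$. Applying the ball Poincar\'e inequality on each $B_{X_i}(C\tilde\delta)$ and summing therefore produces $\tilde{\delta}^{2}\sum_i\int_{B_{X_i}(C\tilde\delta)}|\nabla f|^2\asymp n\tilde{\delta}^{d+2}D_2(f)$ rather than $\tilde{\delta}^{2}D_2(f)$, which destroys the claimed rate. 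The fix is not to inflate the cells: one must exploit that each $\tilde U_i$ carries $\tilde G_n$-mass exactly $1/n$, write $(f(x)-\bar f_i)^2\le n\int_{\tilde U_i}(f(x)-f(y))^2\tilde g_n(y)\,dy$ by Jensen, and keep the resulting double integral restricted to the thin set $\bigcup_i\tilde U_i\times\tilde U_i$ (total mass $1/n$, pairs separated by at most $2C\tilde\delta$) when converting to $D_2(f)$; this is how the cited reference obtains the factor $\tilde\delta$ without the spurious $\sqrt{n\tilde{\delta}^{d}}$. A smaller point on the second bound: matching each interaction of $K_{\epsilon-2\tilde\delta}$ with an edge weight in $b_\epsilon(u)$ requires a uniform \emph{lower} bound on $\eta(\|X_i-X_j\|/\epsilon)$ over the pairs that arise, which is exactly where the monotonicity of $\eta$ together with $\eta(1/2)>0$ from Assumption \ref{a3} must be invoked (and forces the smoothing bandwidth to be taken a fixed fraction of $\epsilon$); support containment alone does not suffice if $\eta$ vanishes on part of $[0,1]$.
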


Now, as we consider the Dirichlet energies $b_{w,\epsilon}(u)$ and $D_{w}(f)$ for the weighted Laplacian. Note that by the boundedness assumption of the density $g$, we have there exist constants $C>0$ and $C'>0$ such that
\begin{align*}
    C'\int_{\mcal{X}}\|\nabla f_{g}(x)\|^{2}g(x)^{q}dx\le \int_{\mcal{X}}\|\nabla f_{g}(x)\|^{2}g(x)^{2}dx\le C\int_{\mcal{X}}\|\nabla f_{g}(x)\|^{2}g(x)^{q}dx.
\end{align*}
Also, with transformation $v:=D^{-r/(q-1)}u$ for $q\neq 1$, we have
\begin{align*}
    \langle L_{w,n,\epsilon}u,u\rangle_{g^{p-r}}=\langle (D-W)v,v\rangle. 
\end{align*}
This also holds for $q=1$ by definition \eqref{weightedgraph}. According to Section \ref{kdesectionpf}, we obtain that there exist constants $C>0$ and $C'>0$ such that for large $n$, almost surely,
\begin{align*}
    C'b_{w,\epsilon}(u)\le b_{\epsilon}(u)\le Cb_{w,\epsilon}(u).
\end{align*}

Consequently, following the proof in \cite{green2021minimaxsmoothing}, we present the following propositions parallelling Proposition \ref{key1} and \ref{key2} associated with the weighted case.

\begin{Proposition}\label{key3}
    With probability at least $1-C ne^{-C n \theta^{2} \tilde{\delta}^{d}}$, we have for any $f\in L^{2}(\mcal{X},g^{p-r}
    )$,
    \begin{align*}
        b_{\epsilon}(\tilde{\mcal{P}}f)\le C(1+C(\theta+\tilde{\delta}))\left(1+C \frac{\tilde{\delta}}{\epsilon}\right)\sigma_{1}\cdot D_{2}(f),
    \end{align*}
    and for any $u\in L^{2}(G_{n})$,
    \begin{align*}
        \sigma_1 D_{2}(\tilde{\mcal{I}}u)\le C(1+C(\theta+\tilde{\delta}))\left(1+C \frac{\tilde{\delta}}{\epsilon}\right)\cdot b_{\epsilon}(u),
    \end{align*}   
    where $C>0$ is some constant not depending on $n$ or $f$.
\end{Proposition}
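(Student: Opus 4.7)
The plan is to reduce Proposition~\ref{key3} to Proposition~\ref{key1} via the norm and energy equivalences that the boundedness of $g$ and of the KDE degrees already provide. The two key equivalences are: under Assumption~\ref{a2}, since $g$ is bounded between $g_{\min}$ and $g_{\max}$ on $\mcal{X}$, the spaces $L^{2}(\mcal{X},g^{p-r})$ and $L^{2}(\mcal{X})$ coincide as sets with equivalent norms, and the continuum Dirichlet energy $D_{w}(f)=\int_{\mcal{X}}\|\nabla f_{g}\|^{2}g^{q}\,dx$ is comparable to $D_{2}$ evaluated on $f_{g}:=f/g^{r}$, with constants depending only on $g_{\min},g_{\max},p,q,r$; and on the discrete side, Section~\ref{kdesectionpf} guarantees that with probability at least $1-Cne^{-cn\tilde{\delta}^{d}}$ the degrees $\tilde{d}_{i}$ and $d_{i}$ all lie in a deterministic interval $[c,C]\subset(0,\infty)$, so that $w_{i,j}^{\epsilon}=\tilde{w}_{i,j}^{\epsilon}/(\tilde{d}_{i}\tilde{d}_{j})^{1-q/2}\asymp\tilde{w}_{i,j}^{\epsilon}$ uniformly on this event.

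I would then carry out the pointwise substitution $v:=D^{-r/(q-1)}u$ (taking $v=u$ when $q=1$). Expanding the weighted inner product via $\langle\cdot,\cdot\rangle_{g^{p-r}}=\langle\cdot,\cdot\rangle_{D^{(p-1-r)/(q-1)}}$ and using \eqref{weightedgraph} gives
$$\langle L_{w,n,\epsilon}u,u\rangle_{g^{p-r}}=\tfrac{1}{\epsilon^{2}}v^{\top}(D-W)v=\tfrac{1}{2\epsilon^{2}}\sum_{i,j}w_{i,j}^{\epsilon}(v_{i}-v_{j})^{2},$$
which by the degree equivalence above is comparable to $\epsilon^{-2}b_{\epsilon}(v)$. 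Identifying $v$ with $\tilde{\mcal{P}}f_{g}$ in the discretization direction, and with the corresponding transformed image for the interpolation direction, an application of Proposition~\ref{key1} to $v$ and $f_{g}$ yields both bounds of Proposition~\ref{key3} once the $\epsilon^{-2}$ factor and the deterministic comparison constants are absorbed into the single constant $C$ in the statement. The probability assertion then follows from a single union bound over the degree event of Section~\ref{kdesectionpf} and the event of Proposition~\ref{key1}, both of the form $1-Cne^{-cn\theta^{2}\tilde{\delta}^{d}}$.

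The main point requiring care is verifying that the linear substitution $v=D^{-r/(q-1)}u$ is compatible with the maps $\tilde{\mcal{P}}$ and $\tilde{\mcal{I}}$, which are defined purely via the transport between $G$ and $\tilde{G}_{n}$ and are agnostic to the choice of $(p,q,r)$. Concretely, one must control the ratio between $d_{i}^{-r/(q-1)}$ and the continuum weight $g(X_{i})^{-r}$ on the good event — this is precisely what the KDE concentration bound $\|g_{n}-g\|_{\infty}=O_{\text{a.s.}}(\sqrt{|\log\epsilon|/(n\epsilon^{d})}+\epsilon)$ from Section~\ref{kdesectionpf} provides, and the error enters only multiplicatively on a scale already captured by the $(1+C(\theta+\tilde{\delta}))$ and $(1+C\tilde{\delta}/\epsilon)$ factors present in Proposition~\ref{key1}. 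Beyond this bookkeeping I expect no conceptually new difficulty; the argument is essentially a careful relabelling of Proposition~\ref{key1} with the weighted factors absorbed into $C$.
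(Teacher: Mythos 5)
Your proposal matches the paper's route: the paper also proves this by combining the equivalence $D_w \asymp D_2$ from the boundedness of $g$, the transformation $v=D^{-r/(q-1)}u$ together with the KDE degree bounds of Section \ref{kdesectionpf} to get $b_{w,\epsilon}\asymp b_\epsilon$, and then following the proof of Proposition \ref{key1} (i.e., Proposition 4 of \cite{green2021minimaxsmoothing}) with these comparisons absorbed into the constant $C$. Your extra remarks on the compatibility of the substitution with $\tilde{\mcal{P}},\tilde{\mcal{I}}$ and the union bound over the degree event are consistent with (indeed slightly more explicit than) what the paper does, so no gap.
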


\begin{Proposition}\label{key4}
    With probability at least $1-C ne^{-C n \theta^{2} \tilde{\delta}^{d}}$, we have for any $f\in L^{2}(\mcal{X},g^{p-r})$,
    \begin{align*}
        \bigg|\|f\|_{L^{2}(\mcal{X},g^{p-r})}^2-\|\tilde{\mcal{P}}f\|_{w,n}^{2}\bigg|\le C\tilde{\delta}\|f\|_{L^{2}(\mcal{X},g^{p-r})}\sqrt{D_{w}(f)}+C(\theta+\tilde{\delta})\|f\|_{L^{2}(\mcal{X},g^{p-r})}^{2}+\Delta(n,\epsilon,\eta,g)+\epsilon,
    \end{align*}
    and for any $u\in L^{2}(G_{n})$,
    \begin{align*}
        \bigg|\|u\|_{w,n}^2-\|\tilde{\mcal{I}}u\|_{L^{2}(\mcal{X},g^{p-r})}^{2}\bigg|\le C\epsilon\|u\|_{w,n}\sqrt{b_{w,\epsilon}(u)}+C(\theta+\tilde{\delta})\|u\|_{w,n}^{2}+\Delta(n,\epsilon,\eta,g)+\epsilon,
    \end{align*}
    where $C>0$ is some constant not depending on $n$ or $f$ and 
    \begin{align*}
    \Delta(n,\epsilon,\eta,g)&=\frac{1}{n}g_{\max}+\frac{\eta(0)}{n\epsilon^{d}}+\frac{n-1}{n}\Delta(n,\epsilon),\\
    \Delta(n,\epsilon)&:=\sqrt{\frac{|\log \epsilon|}{n\epsilon^{d}}}+\epsilon.
\end{align*}
\end{Proposition}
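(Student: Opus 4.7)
The plan is to reduce Proposition~\ref{key4} to its unweighted counterpart, Proposition~\ref{key2}, by combining the KDE concentration from Section~\ref{kdesectionpf} with the weighting substitution $\tilde f := f\cdot g^{(p-1-r)/2}$, which satisfies $\|\tilde f\|_{L^2(G)}^2 = \|f\|_{L^2(\mathcal{X},g^{p-r})}^2$. The KDE concentration lets me replace the discrete degree-based weight $d_i^{(p-1-r)/(q-1)}$ in $\langle\cdot,\cdot\rangle_{w,n}$ by its continuum analog $g(X_i)^{p-1-r}$ at the additive cost $\Delta(n,\epsilon,\eta,g)+\epsilon$; the substitution $\tilde f$ then recasts the weighted continuum norm as a standard $L^2(G)$ norm so that Proposition~\ref{key2} applies directly.

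For the first inequality I would proceed in four steps. (1)~Expand $d_i = \sum_j \tilde w_{i,j}^\epsilon/(\tilde d_i \tilde d_j)^{1-q/2}$ and plug in the uniform KDE bound $\|g_n - g\|_\infty = O(\sqrt{|\log\epsilon|/(n\epsilon^d)}+\epsilon)$ from Section~\ref{kdesectionpf}---with the $O(\epsilon)$-measure boundary layer handled by the almost-sure boundedness of $d_i$---to obtain, uniformly in $i$ and almost surely for large $n$, $d_i = g(X_i)^{q-1}(1+O(\Delta+\epsilon))$, and hence $d_i^{(p-1-r)/(q-1)} = g(X_i)^{p-1-r} + O(\Delta(n,\epsilon,\eta,g)+\epsilon)$. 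This yields
\[ \|\tilde{\mathcal{P}}f\|_{w,n}^2 = \tfrac{1}{n}\sum_i (\tilde{\mathcal{P}}f)(X_i)^2 g(X_i)^{p-1-r} + O(\Delta+\epsilon). \]
(2)~Approximate $(\tilde{\mathcal{P}}f)(X_i)\cdot g(X_i)^{(p-1-r)/2}$ by $(\tilde{\mathcal{P}}\tilde f)(X_i)$, picking up an error of order $\tilde\delta\|f\|_{L^2(\mathcal{X},g^{p-r})}^2$ via the $O(\tilde\delta)$-diameter of the transport cells $\tilde U_i$ and the Lipschitz continuity of $g$. (3)~Apply Proposition~\ref{key2} to $\tilde f$, obtaining the bound in terms of $\|\tilde f\|_{L^2(G)}$ and $\sqrt{D_2(\tilde f)}$. (4)~Translate $D_2(\tilde f)$ back to $D_w(f)$ via the Leibniz identity $\nabla\tilde f = g^{(p-1+r)/2}\nabla(f/g^r) + \tfrac{p-1+r}{2}(f/g^r)g^{(p-1+r)/2-1}\nabla g$, combined with the Lipschitz bound on $g$ and the two-sided bound $g_{\min}\le g\le g_{\max}$ from assumption~\ref{a2}, yielding $D_2(\tilde f)\lesssim D_w(f) + \|f\|_{L^2(\mathcal{X},g^{p-r})}^2$. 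The second inequality (for $\tilde{\mathcal{I}}$) follows by symmetric arguments: the same KDE substitution converts $\|u\|_{w,n}^2$ into a standard discrete norm of an appropriately rescaled vector on $G_n$, to which the companion inequality in Proposition~\ref{key2} applies after the interpolation/Lipschitz comparison between $\tilde{\mathcal{I}}u$ and $\tilde{\mathcal{I}}(g^{(p-1-r)/2}u)$.

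The main obstacle is organizing the error budget so that KDE fluctuations enter purely as the additive term $\Delta(n,\epsilon,\eta,g)+\epsilon$ and do not pollute the $\tilde\delta\|f\|\sqrt{D_w(f)}$ or $(\theta+\tilde\delta)\|f\|^2$ terms. This requires splitting the sum over $i$ into interior sample points---where the uniform KDE bound gives the $\Delta$ summand cleanly after bounding $\tfrac{1}{n}\sum_i (\tilde{\mathcal{P}}f)(X_i)^2$ by a constant multiple of $\|f\|_{L^2(\mathcal{X},g^{p-r})}^2$ (via the contractivity of $\tilde{\mathcal{P}}$ and the two-sided bound on $g$)---and the $O(\epsilon)$-fraction of points lying in $\partial_\epsilon \mathcal{X}$, where only the almost-sure boundedness of $d_i$ from Section~\ref{kdesectionpf} is available and contributes the $\epsilon$ summand. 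A secondary technicality is tracking the sign of the exponent $(p-1+r)/2$ in the Leibniz step so that $g_{\min}\le g\le g_{\max}$ correctly controls $D_2(\tilde f)$ by $D_w(f)$ in all parameter regimes of $(p,q,r)$.
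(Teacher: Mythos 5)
Your proposal is correct in substance, but it takes a genuinely different route from the paper's. The paper does not re-prove the transport-map machinery at all: it records two equivalences --- $C'\int\|\nabla f_g\|^2g^q\le\int\|\nabla f_g\|^2g^2\le C\int\|\nabla f_g\|^2g^q$ for the continuum energies, and $C'b_{w,\epsilon}(u)\le b_\epsilon(u)\le Cb_{w,\epsilon}(u)$ for the discrete ones via the substitution $v=D^{-r/(q-1)}u$ and the KDE bounds of Section~\ref{kdesectionpf} --- and then asserts that the proof of Proposition 5 of \cite{green2021minimaxsmoothing} goes through verbatim with the weighted quantities. You instead keep Proposition~\ref{key2} as a black box and apply it to the exactly reweighted function $\tilde f=fg^{(p-1-r)/2}$, which satisfies the clean isometry $\|\tilde f\|_{L^2(G)}=\|f\|_{L^2(\mcal{X},g^{p-r})}$, and then pay controlled errors in three places: the KDE replacement $d_i^{(p-1-r)/(q-1)}\to g(X_i)^{p-1-r}$ (the $\Delta+\epsilon$ term), the commutation of $g^{(p-1-r)/2}$ past $\tilde{\mcal{P}}$ over cells of diameter $O(\tilde\delta)$ (absorbed into $(\theta+\tilde\delta)\|f\|^2$), and the Leibniz comparison $D_2(\tilde f)\lesssim D_w(f)+\|f\|^2$ (absorbed into the $\tilde\delta\|f\|\sqrt{D_w(f)}$ and $\tilde\delta\|f\|^2$ terms). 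Your route is more self-contained and avoids the paper's somewhat delicate claim that the Dirichlet forms of $u$ and $D^{-r/(q-1)}u$ are two-sidedly equivalent (which, as stated, is questionable for $r\neq 0$ since the forms act on differences); the paper's route avoids your extra commutation errors. One point you should make explicit: your claim that the boundary layer contributes only the additive $\epsilon$ requires $\int_{\partial_{C\epsilon}\mcal{X}}f^2g^{p-r}\lesssim\epsilon$, which does not hold for arbitrary $f\in L^2$ (mass can concentrate near $\partial\mcal{X}$); the paper's statement has the same dimensional looseness and is used only for (normalized, regular) eigenfunctions, so this is a shared gloss rather than a defect unique to your argument, but it deserves a sentence.
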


Also, we state the following Weyl's law whose proof follows \cite[Lemma 7.10]{dunlop2020large}.
\begin{Proposition}[Weyl's law]\label{refWeyl}
    There exist constants $C,C'>0$ such that
    \begin{align*}
        C'l^{2/d}\le \lambda_{l}(\mcal{L}_{w})\le Cl^{2/d},
    \end{align*}
    for all $l\ge 2$.
\end{Proposition}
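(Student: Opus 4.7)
The plan is to reduce Weyl's law for the weighted operator $\mcal{L}_w$ to the classical Weyl's law for the standard Neumann Laplacian on the bounded Lipschitz domain $\mcal{X}$, via the min--max (Courant--Fischer) variational characterization of eigenvalues together with the norm equivalence enforced by assumption~\ref{a2}.

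First, I would write down the Dirichlet form of $\mcal{L}_w$. Using the Neumann-type boundary condition $g^q\tfrac{\partial}{\partial n}(u/g^r)=0$ from~\eqref{weightedop} and integration by parts, for $u\in H^1(\mcal{X},g)$ one has
\begin{align*}
\langle \mcal{L}_w u,u\rangle_{g^{p-r}}
= \tfrac12\int_{\mcal{X}} g(x)^{q}\,\Bigl\|\nabla\Bigl(\tfrac{u(x)}{g(x)^r}\Bigr)\Bigr\|^{2}dx,
\qquad
\|u\|_{L^2(\mcal{X},g^{p-r})}^{2}=\int_{\mcal{X}}\bigl(u/g^r\bigr)^{2}g^{p+r}dx.
\end{align*}
Changing variables by $v:=u/g^{r}$ (which is a bijective linear isometry between $H^1(\mcal{X},g)$ and $H^1(\mcal{X})$ because $g\in C^{s-1}$ with $g_{\min}\le g\le g_{\max}$), the Rayleigh quotient of $\mcal{L}_w$ becomes
\begin{align*}
R_w(v):=\tfrac12\,\frac{\int_{\mcal{X}} g^{q}\,\|\nabla v\|^{2}\,dx}{\int_{\mcal{X}} g^{p+r}\,v^{2}\,dx}.
\end{align*}

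Next, I would invoke assumption~\ref{a2} to sandwich $R_w$ between two constant-weight Rayleigh quotients. Since $g_{\min}\le g\le g_{\max}$, there exist positive constants $c_1,c_2$ (depending only on $g_{\min},g_{\max},p,q,r$) such that
\begin{align*}
c_1\,\frac{\int_{\mcal{X}}\|\nabla v\|^{2}\,dx}{\int_{\mcal{X}} v^{2}\,dx}
\;\le\; R_w(v) \;\le\; c_2\,\frac{\int_{\mcal{X}}\|\nabla v\|^{2}\,dx}{\int_{\mcal{X}} v^{2}\,dx},
\end{align*}
for every $v\in H^{1}(\mcal{X})\setminus\{0\}$. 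The right-most quotient is precisely the Rayleigh quotient of the standard Neumann Laplacian $-\Delta_N$ on $\mcal{X}$. By the Courant--Fischer min--max principle applied in the $v$-variable, this pointwise sandwich transfers to the eigenvalues:
\begin{align*}
c_1\,\lambda_l(-\Delta_N)\;\le\;\lambda_l(\mcal{L}_w)\;\le\;c_2\,\lambda_l(-\Delta_N),\qquad l\ge 1.
\end{align*}

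Finally, I would quote the classical Weyl's law for the Neumann Laplacian on a bounded Lipschitz domain, which yields $\lambda_l(-\Delta_N)\asymp l^{2/d}$ for $l\ge 2$ (the case $l=1$ is excluded because $\lambda_1(-\Delta_N)=0$, corresponding in the $u$-variable to the constant multiple of $g^r$ that spans the kernel of $\mcal{L}_w$). Combining with the previous display gives $C' l^{2/d}\le \lambda_l(\mcal{L}_w)\le C l^{2/d}$ for $l\ge 2$, as required.

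The only non-routine point is justifying the integration-by-parts identity and the change of variables at the level of the form domain: one must check that the map $u\mapsto u/g^r$ is a bicontinuous isomorphism between the natural form domain of $\mcal{L}_w$ (imposing the Neumann-type boundary condition weakly) and $H^1(\mcal{X})$. Under assumption~\ref{a2} together with $g\in C^{s-1}(\mcal{X})$ with $s\ge 1$, this is standard, and from there the argument is a direct variational comparison. This matches the strategy in \cite[Lemma 7.10]{dunlop2020large}, which was cited in the statement.
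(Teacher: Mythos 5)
Your argument is correct and is essentially the proof that the paper delegates to \cite[Lemma 7.10]{dunlop2020large}: integrate by parts to get the Dirichlet form, change variables $v=u/g^{r}$, sandwich the Rayleigh quotient using $g_{\min}\le g\le g_{\max}$ from \ref{a2}, transfer to eigenvalues via Courant--Fischer, and invoke the classical Weyl law for the Neumann Laplacian on a bounded Lipschitz domain. The only cosmetic quibble is that $u\mapsto u/g^{r}$ is a bicontinuous isomorphism of form domains rather than an isometry, a point you already flag and which does not affect the two-sided eigenvalue comparison.
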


Therefore, by following \cite[Proof of Lemma 2]{green2021minimaxsmoothing} except that we replace Propositions \ref{key1} and \ref{key2} by Propositions \ref{key3} and \ref{key4}, we obtain the following bound for the eigenvalues.

\begin{Lemma}\label{eigenref}
    Under the assumptions \ref{a1} and \ref{a2}, there exist constant $C,C'>0$ and $N>0$ such that for $n\ge N$ and $C(\log n/n)^{1/d}\le \epsilon\le C$, with probability larger than $1-Cne^{-Cn\epsilon^{d}}$, it holds that
    \begin{align*}
        C'\min\{l^{2/d},\epsilon^{-2}\}\le \lambda_{l}(L_{w,n,\epsilon})\le C\min\{l^{2/d},\epsilon^{-2}\},
    \end{align*}
    for all $2\le l\le n$.
\end{Lemma}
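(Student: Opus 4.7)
\textbf{Proof plan for Lemma~\ref{eigenref}.} The plan is to follow the variational (Courant-Fischer) strategy of \cite[Proof of Lemma 2]{green2021minimaxsmoothing}, but replace the unnormalized quantities $b_\epsilon, D_2$ with their weighted counterparts $b_{w,\epsilon}, D_w$, and use Propositions~\ref{key3} and~\ref{key4} in place of Propositions~\ref{key1} and~\ref{key2}. The almost-isometry property of $\tilde{\mcal{P}}$ and $\tilde{\mcal{I}}$ is what allows transferring the bounds on eigenvalues of $\mcal{L}_w$ (known from Weyl's law, Proposition~\ref{refWeyl}) to corresponding bounds on eigenvalues of $L_{w,n,\epsilon}$.

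\emph{Upper bound.} First I would write the min-max formulas: $\lambda_l(\mcal{L}_w) = \min_{\dim V = l}\max_{f\in V\setminus\{0\}} D_w(f)/\|f\|_{L^2(\mcal{X},g^{p-r})}^2$, and analogously for $\lambda_l(L_{w,n,\epsilon})$ using the weighted Dirichlet form $b_{w,\epsilon}$ and norm $\|\cdot\|_{w,n}$. Taking a minimizing $l$-dimensional subspace $V_l \subset L^2(\mcal{X}, g^{p-r})$ associated with $\lambda_l(\mcal{L}_w)$, push it forward through $\tilde{\mcal{P}}$ to obtain $\tilde{\mcal{P}}(V_l) \subset L^2(G_n)$. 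Proposition~\ref{key3} gives $b_{w,\epsilon}(\tilde{\mcal{P}}f) \le (1 + o(1))\sigma_1 D_w(f)$, and Proposition~\ref{key4} ensures the norms $\|\tilde{\mcal{P}}f\|_{w,n}^2$ and $\|f\|_{L^2(\mcal{X},g^{p-r})}^2$ agree up to $(1+o(1))$ factors plus vanishing additive error; in particular the dimension $l$ is preserved when $\epsilon, \tilde\delta, \Delta(n,\epsilon,\eta,g) \to 0$. Combining with Weyl's law, this yields $\lambda_l(L_{w,n,\epsilon}) \lesssim l^{2/d}$. The competing upper bound $\lambda_l(L_{w,n,\epsilon}) \lesssim \epsilon^{-2}$ follows directly from bounding the operator norm of $L_{w,n,\epsilon}$ using the definition~\eqref{def:weightedL} together with the control on $d_i, w_{ij}^\epsilon$ from Section~\ref{kdesectionpf}.

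\emph{Lower bound.} I would mirror the argument with the interpolation map: pick an $l$-dimensional minimizing subspace $U_l \subset L^2(G_n)$ for $\lambda_l(L_{w,n,\epsilon})$, and map it to $\tilde{\mcal{I}}(U_l) \subset L^2(\mcal{X},g^{p-r})$. By Proposition~\ref{key3} and~\ref{key4}, the Rayleigh quotients $\sigma_1 D_w(\tilde{\mcal{I}}u)/\|\tilde{\mcal{I}}u\|_{L^2(\mcal{X},g^{p-r})}^2$ are controlled from above by $(1+o(1)) b_{w,\epsilon}(u)/\|u\|_{w,n}^2$, giving $\lambda_l(\mcal{L}_w) \lesssim \lambda_l(L_{w,n,\epsilon})$, and hence $\lambda_l(L_{w,n,\epsilon}) \gtrsim l^{2/d}$ by Weyl's law, whenever the regime $l^{2/d} \lesssim \epsilon^{-2}$ is active. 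When $l$ is large enough that $l^{2/d} \gtrsim \epsilon^{-2}$, interlacing from the monotonicity of eigenvalues, together with the earlier eigenvalues already reaching the order $\epsilon^{-2}$, gives the lower bound at the plateau.

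\emph{Main obstacle.} The main technical nuisance, compared to the unnormalized setting in \cite{green2021minimaxsmoothing}, is the extra additive error term $\Delta(n,\epsilon,\eta,g)+\epsilon$ in Proposition~\ref{key4} coming from the kernel-density normalization in the weighted construction. I must verify that under the lower-bound assumption $\epsilon \ge C(\log n/n)^{1/d}$, this additive remainder is $o(1)$ and in particular does not destroy injectivity of $\tilde{\mcal{P}}$ (or $\tilde{\mcal{I}}$) on the subspaces considered, so that the pushed-forward subspaces genuinely have dimension $l$ and the argument goes through. Controlling this requires combining the KDE concentration in Section~\ref{kdesectionpf} with the almost-isometry bounds, and choosing $\theta$ small enough in Proposition~\ref{transportdensity} so that the overall probability budget remains $1 - Cne^{-cn\epsilon^d}$.
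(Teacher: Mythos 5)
Your proposal is correct and follows essentially the same route as the paper: the paper's proof simply defers to \cite[Proof of Lemma 2]{green2021minimaxsmoothing}, replacing Propositions~\ref{key1} and~\ref{key2} by their weighted counterparts, Propositions~\ref{key3} and~\ref{key4}, and invoking Weyl's law (Proposition~\ref{refWeyl}), which is exactly the Courant--Fischer transfer argument you outline. Your attention to the extra additive error $\Delta(n,\epsilon,\eta,g)+\epsilon$ and the probability budget is consistent with how the paper handles the weighted normalization via the KDE bounds of Section~\ref{kdesectionpf}.
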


\end{document}